\newtheorem{theorem}{Theorem}[section]
\newtheorem{remark}[theorem]{Remark}
\newtheorem{prop}[theorem]{Proposition}
\newtheorem{cor}[theorem]{Corollary}
\DeclareMathOperator{\Tr}{Tr}
\newcommand{\ZZ}{\mathbb{Z}}
\newcommand{\NN}{\mathbb{N}}
\newcommand{\RR}{\mathbb{R}}
\newcommand{\TT}{\mathbb{T}^2}
\newcommand{\B}{\mathcal{B}}
\newcommand{\Hc}{\mathcal{H}}
\newcommand{\LG}{\mathcal{L}}
\newcommand{\Td}{\dot{T}}
\newcommand{\LGd}{\dot{\LG}}
\newcommand{\rr}{r}
\newcommand{\p}{2}
\newcommand{\q}{1}
\title{Optimal linear response for Anosov diffeomorphisms}
\author{Gary Froyland\thanks{g.froyland@unsw.edu.au, corresponding author} and Maxence Phalempin\thanks{m.phalempin@unsw.edu.au} \\ School of Mathematics and Statistics \\ UNSW Sydney, Sydney NSW 2052, Australia}
\date{}
\begin{document}

\maketitle
\begin{abstract}
It is well known that an Anosov diffeomorphism $T$ enjoys linear response of its SRB measure with respect to infinitesimal perturbations $\dot{T}$.
For a fixed observation function $c$, we develop a theory to optimise the response of the SRB-expectation of $c$.
Our approach is based on the response of the transfer operator on the anisotropic Banach spaces of Gou\"ezel--Liverani.
We prove that the optimising perturbation $\dot{T}$ is unique for non-degenerate response functions and provide explicit expressions for the Fourier coefficients of $\dot{T}$.
We develop an efficient Fourier-based numerical scheme to approximate the optimal vector field $\dot{T}$, along with a proof of convergence.
The utility of our approach is illustrated in two numerical examples, by localising SRB measures with small, optimally selected, perturbations.
\end{abstract}

\section{Introduction}\label{secintro}

The expanding and contracting behaviour of hyperbolic dynamical systems $T_0:M\to M$ have made such systems classical models of chaotic behaviour, and their statistical properties have been widely studied (see \cite{Hopf60,Smale73,Anosov_Sinai_1967} or \cite{katok} for an overview). 
Transitive uniformly hyperbolic systems exhibit a unique physical measure $f_0$, the Sinai--Ruelle--Bowen (SRB) measure, which describes the long-term distribution of trajectories initialised in a Lebesgue-positive-measure set of points.
The SRB measure was introduced in the fundamental works \cite{Sinai72,Bowen75,Ruelle76} and was extended to systems with more complicated or weaker hyperbolicity assumptions like the Sinai Billiard or H\'enon map in \cite{young98}; see \cite{young02} for a survey.
If such systems are perturbed in a suitable way, they exhibit \textit{linear response} in the sense that a smooth perturbation of the transformation $T_0$ into a transformation $T_\delta$ induces a smooth perturbation of the SRB measure $f_0$ into $f_\delta$.
The notion of linear response for dynamical systems was first introduced by Ruelle \cite{Ru97} and was later generalized to systems exhibiting weaker forms of hyperbolicity (see \cite{Ba14} for an overview).
In this work we are concerned with  linear response of Anosov maps of the torus, a classical class of hyperbolic systems inspired by the work of Anosov \cite{Anosov67} and whose ergodic properties were investigated by  \cite{Sinai72},\cite{Ruelle76} and \cite{Bowen75}.
We focus on \textit{optimal linear response} \cite{DrFr18,AnFrGa22,FrGa25,GaNi25}, which in the present setting identifies the perturbation of $T_0$ that optimises the linear response of the expectation of a specified observation $c:M\to \mathbb{R}$ with respect to the SRB measure.
By choosing the function $c$ appropriately, one may engineer perturbations of $T_0$ that achieve a desired long-term system effect. 

To detail these notions we introduce the following formal setting. 
We consider a family of transitive  {$C^{\rr+1}$} Anosov maps of the two-torus, $\{T_\delta:\mathbb{T}^2\to\mathbb{T}^2\}_{\delta\in[0,\bar\delta]}$ for some $\bar\delta>0$ { {sufficiently} small} and {$\rr=4$}.
We denote the SRB measure corresponding to $T_\delta$ by $f_\delta$.
We assume that this family is differentiable with respect to $\delta$ at $\delta=0$ in the following sense:  there is a $\dot{T} \in {C^{\rr+1}}(\mathbb{T}^2,\mathbb{R})$  such that,
\begin{equation}\label{Ttaylorintro}
T_{\delta }=T_{0}+\delta\cdot \dot{T}+o_{C^{5}}(\delta).
\end{equation}
We take a functional analytic approach and use the Banach spaces $\mathcal{B}^{p,q}$ devised in \cite{GL06}.
The family of maps in \eqref{Ttaylorintro} gives rise to a family of transfer operators $\{\mathcal{L}_\delta:\mathcal{B}^{2,1}\to\mathcal{B}^{2,1}\}_{\delta\in[0,\bar\delta]}$,
which leads naturally to a notion of linear response for the transfer operators.
Theorem \ref{prop:taylorcon} proves the existence of a continuous operator $\dot{\mathcal{L}}:\mathcal{B}^{2,1}\to\mathcal{B}^{1,2}$ satisfying 
\begin{equation*}
\mathcal{L}_{\delta }=\mathcal{L}_{0}+\delta\cdot \dot{\mathcal{L}}+o(\delta),
\end{equation*}
and derives an explicit expression for $\dot{\mathcal{L}}$ in terms of $T_0, \dot{T}$, and $\mathcal{L}_0$, namely 
\begin{equation}
    \label{dotLexpr}
      \LGd f(\cdot)=-\LG_0\left(\nabla \cdot \left(f(\cdot)(D_\cdot T_0)^{-1}\circ\dot T(\cdot)\right)\right),
\end{equation}
where $\nabla \cdot$ stands for the divergence operator.
In Theorem \ref{thm:deriv} we further show that $\dot{\mathcal{L}}(\dot{T})$ is a continuous function of $\dot{T}$ from $C^{\rr+1}(\mathbb{T}^2,\mathbb{R}^2)$ to $L(\B^{2,1},\B^{1,2})$, a fact that will be crucial for our later optimisation of response.

A fundamental question in linear response is the response of the SRB measure $f_\delta\in \mathcal{B}^{2,1}$ of $T_\delta$ with respect to the parameter $\delta$ \cite{Ru97}.
In Theorem \ref{thmresp} we use the expression \eqref{dotLexpr} and classical perturbation results from \cite{KL99} to provide a simple demonstration of the existence of linear response of the invariant measure $\dot{f}:=\frac{d}{d\delta}f_\delta|_{\delta=0}$.
Considering this response as a function of $\dot{T}$, so that $R(\dot{T})=\dot{f}$, in Theorem \ref{thm:deriv} we show that the response function 
\begin{align}
\label{eq:responseintro}
R(\dot T)=(I-\mathcal{L}_0)^{-1}\dot{\mathcal{L}}(\dot{T})(f_0)   
\end{align}
is a continuous function of $\dot{T}$ from $C^{\rr+1}(\mathbb{T}^2,\mathbb{R}^2)$ to $\mathcal{B}^{1,2}$. 
We then turn to the existence and optimisation of linear response of the integrals $\frac{d}{d\delta}\int c\cdot f_\delta = \int c\cdot \dot{f}$, where $c$ is a regular observation function.
Given an observable $c\in C^3$, 
we consider the problem of maximising the response $J(\dot{T}):=\int c\cdot R(\dot{T})$
over all unit-size perturbations $\dot{T}$ in a suitable Hilbert space $\mathcal{H}$, such as $\mathcal{H}=H^7(\mathbb{T}^2,\mathbb{R}^2)$, contained in $C^5$ perturbations $\dot{T}$.
That is, we wish to solve 
\begin{eqnarray}
\nonumber\max_{\dot{T}\in\mathcal{H}}&& \int c\cdot R(\dot{T})\\
\nonumber\mbox{subject to}&&\|\dot{T}\|_{\mathcal{H}}=1.    
\end{eqnarray}

The continuity of the response functional $J$, and the boundedness and strict convexity of the feasible set $\{\dot{T}\in\mathcal{H}:\|\dot{T}\|_\mathcal{H}\le 1\}$ over which we optimise, immediately guarantees the existence of a unique optimal perturbation $\dot{T}$ (Proposition \ref{propoptigen}).
Using Riesz representation, in Theorem \ref{mainthm1} we derive explicit expressions for the Fourier coefficients of this optimal vector field $\dot{T}$.
From these coefficient expressions, in Section \ref{sec:numerics} we detail a highly efficient numerical scheme, based on the Fejer kernel approach of \cite{CF20}, to compute estimates of the optimal $\dot{T}$.
The convergence of the estimated optimal perturbations to the true optimal perturbation $\dot{T}$ is provided in Theorem \ref{thmmolli} and Corollary \ref{corfejermol}.

We illustrate the effectiveness of our approach in two case studies in Section \ref{sec:casestudy}.
First, we consider the Arnold cat map and a function $c$ with large values concentrated about the fixed point at the origin.
Thus, we expect the optimal perturbation $\dot{T}$ to be such that maps $T_0+\delta\cdot\dot{T}$ for small $\delta>0$ have SRB measures with density preferentially concentrated about the fixed point. 
We describe the dynamically interesting strategy taken by the computed optimal $\dot{T}$ -- shown in Figure \ref{linearvf} -- and verify that the SRB measure of $T_0+\delta\cdot\dot{T}$ is indeed concentrated about the fixed point for small $\delta>0$ (in Figure \ref{linearsrbpert}).
Our second case study concerns a nonlinear Anosov map $T_0$, for which we estimate the SRB measure, and then select an observation $c$ with large values in the vicinity of a period-two orbit of $T_0$.
We expect that the optimal perturbation $\dot{T}$ will have the effect of concentrating the SRB measure of $T_0+\delta\cdot\dot{T}$ nearby the period-two orbit and this is verified in Figure \ref{nonlinearsrbpert}.
Both case studies are examples of how one may use this optimisation approach to control the long-term behaviour of the dynamics through small perturbations that are carefully chosen to achieve the desired effect. 

If one substitutes the expression \eqref{dotLexpr} for $\dot{\mathcal{L}}$ into the response function \eqref{eq:responseintro}, one obtains an expression equivalent to the usual ``metaformula'' for invariant measure response discussed in \cite{Ba14}. 
The main difference is that the ``metaformula'' does not contain an explicit expression for $\dot{\mathcal{L}}$.
Our expression \eqref{dotLexpr} for $\dot{\mathcal{L}}$ is the natural generalisation to Anosov maps of the expression given in \cite{GaPo17} for expanding circle maps, where minimum-Sobolev-norm perturbations $\dot{T}$ were found for a given (fixed) response.
Equation \eqref{dotLexpr} is  superficially different (but equivalent) to the expression for $\dot{\mathcal{L}}$ provided in \cite{Po19} in the context of Anosov diffeomorphisms.
We remark that to our knowledge, our $C^1$-regularity hypothesis on the map $\delta \mapsto T_\delta$ is weaker than existing work in the Anosov setting.

Optimal linear response has been investigated in a series of papers, progressing from finite state Markov chains in \cite{DrFr18}, to dynamical systems representable by a  Hilbert--Schmidt integral operator \cite{AnFrGa22} (including optimising both stochastic and deterministic components of annealed random dynamical systems), and deterministic expanding maps of the circle \cite{FrGa25}. All of these papers optimise the linear response of the expected value of observations $c$ as well as the linear response of isolated eigenvalues associated to Markov chains or transfer operators.
Other recent work includes optimising the linear response of the expectation of an observation for annealed transfer operators arising from stochastic differential equations, following ideas from \cite{AnFrGa22}.
Perhaps the closest work to the present one is the use of the so-called fast adjoint response method to estimate the optimal linear response of expectations of observations for uniformly hyperbolic dynamics \cite{GaNi25}. 
In contrast to \cite{GaNi25}, we work on the full domain of the hyperbolic maps $T_\delta$ and when considering the corresponding transfer operators $\mathcal{L}_\delta$, we need to use the anisotropic Banach spaces mentioned earlier.
While there is some technical cost to this, there are several important benefits to having an explicit expression \eqref{dotLexpr} for $\dot{\mathcal{L}}$ in terms of $\mathcal{L}_0$, $D_xT_0$ and $\dot{T}$.
This (i) allows one to use the standard linear response expression \eqref{eq:responseintro}, 
(ii) enables a convergence theory for the numerical scheme, and (iii) opens the possibility to derive other types of optimal linear responses, beyond the response of expectation of an observation $c$.

Because we consider $\mathcal{L}_0$ to act on functions on the full domain of $T_0$, we obtain $f_0$ as the leading eigenfunction of $\mathcal{L}_0$, a fact that is also computationally beneficial.
The explicit expression for the linear response then leads naturally via Riesz representation arguments to explicit expressions for Fourier coefficients of an optimal infinitesimal perturbation $\dot{T}$ in a Fourier basis on the full space.
The convergence theory of \cite{CF20} provides a basis for convergence of the optimal $\dot{T}$ to the true object as the number of Fourier modes in the construction of $\mathcal{L}_0$ increases. 
Our computational implementation is extremely fast with each Fourier coefficient of the optimal $\dot{T}$ taking milliseconds to compute. In contrast to \cite{GaNi25}, our algorithm is deterministic and does not rely on sampling, which may assist construction of computer-assisted error bounds.

All of our theoretical results and our numerical scheme straightforwardly extends to higher-dimensional tori, although this would commensurately increase the cost of the computations. 
Because our theory mostly uses the anisotropic Banach spaces that are well adapted to Anosov flows on compact oriented Riemannian manifolds, the theoretical results introduced in Section \ref{sec:derivLR} and Theorem \ref{thm:deriv} can be adapted to these settings.

The study of linear response -- and optimal linear response in particular -- is important in many applied domains, such as climate science (e.g.\ \cite{HaMa10,GhLu20}) and social science, where optimally selected perturbations can achieve desired outcomes in long-term behaviour for minimal cost. 
Indeed, because our numerical approach involves kernel convolution, one may view our estimates as those corresponding to a map $T_0$ perturbed by i.i.d.\ noise with density given by the localised kernel.
Thus, even when our methodology is applied to maps $T_0$ that are not Anosov and for which one does not have a formal functional analytic theory, as long as the dynamics supports linear response, one may expect meaningful results from optimising deterministic perturbations.

\section{Background}
\label{sec:background}

\subsection{The map perturbations}
For some small $\bar\delta>0$ and $r=4$
we consider a family of {$C^{\rr+1}$} Anosov maps of the two-torus, $\{T_\delta:\mathbb{T}^2\to\mathbb{T}^2\}_{\delta\in[0,\bar\delta]}$.
This family is differentiable with respect to $\delta$ at $\delta=0$ in the following sense: 
\begin{equation}\label{Ttaylor}
T_{\delta }(x)=T_{0}(x)+\delta \dot{T} (x)+o_{C^{5}}(\delta)
\end{equation}
where $\dot{T} \in {C^{\rr+1}}(\mathbb{T}^2,\mathbb{R})$ and we say a family of functions $F_\delta\in {C^{\rr+1}}(\mathbb{T}^2,\mathbb{R})$ is $o_{C^{5}}(\delta)$
if
\begin{align*}
\lim_{\delta\to 0}\frac{||F_\delta||_{C^{5}}}{\delta}=0.
\end{align*}

\subsection{The  Banach spaces and the transfer operator}\label{secbanach}
The transfer operator for $T_\delta$ acting on continuous functions is as usual defined by
\begin{equation}\label{eq:defldelta}
\mathcal{L}_\delta h := h\circ T_\delta^{-1}/|\det(DT_\delta)|\circ T_\delta^{-1}=h\circ T_\delta^{-1}|\det(D(T_\delta^{-1}))|.
\end{equation}
The action of $\mathcal{L}_\delta$ on distributions of order $r$ is defined by duality: 
$$
\int_{\mathbb{T}^2} \varphi \cdot \mathcal{L}_\delta h = \int_{\mathbb{T}^2} \varphi\circ T_\delta \cdot  h, \qquad\mbox{for all $\varphi\in C^\infty(\mathbb{T}^2,\mathbb{C})$.}
$$
For any $p\in \NN$ and $q\in \RR_+$ such that $p+q<r$
we can associate to $T_0$ the anisotropic Banach space $(\mathcal{B}^{p,q},\|\cdot\|_{\B^{p,q}})$ introduced in \cite{GL06} as the completion of the space $C^r$ for the norm $\|\cdot\|_{\B^{p,q}}$ defined by relations $(2.1)$ and $(2.2)$ in \cite{GL06}. 
We recall in the proposition below some of the main features of these spaces;  in this paper, as mentioned previously, we will fix except when explicitly mentionned $p=2, q=1,$ and {$r=4$}.
\begin{prop}\label{lemmulti}
{Let $p\in \NN^*$ and $q\in \RR_+$ satisfy $p+q<r$.}
\begin{itemize}
\item[i)] 
The operator $\LG_0 : \B^{p,q}\mapsto \B^{p,q}$ is continuous and there is an open neighbourhood $U$ of $T_0$ in the $C^{r+1}$ topology 
such that for any $T\in U$ the transfer operator {$\mathcal{L}$ for $T$} is a continuous operator of $\B^{p,q}$ (see Lemma 7.2 in \cite{GL06} with parameters $g(\omega,x)\equiv 1$ and $\mu=\delta_\omega$).
In particular, whenever $\|T_0-T_\delta\|_{C^{\rr+1}}$ is small 
then $\mathcal{L}_\delta:\mathcal{B}^{p,q}\to \mathcal{B}^{p,q}$ is also a well-defined continuous linear operator.
\item[ii)] The inclusion $\B^{p,q}\subset \B^{p-1,q+1}$ is compact\footnote{We even have $\|f\|_{\B^{p-1,q+1}}\leq \|f\|_{\B^{p,q}}$ for $f\in \B^{p,q}$}  (see Lemma 2.1 in \cite{GL06}).
\item[iii)] From the definition of the norms  $\|\cdot\|_{\B^{p,q}}$ and $\|\cdot\|_{\B^{p-1,q+1}}$ (see Remark 4.3 in \cite{GL06}), the space $C^{r'}(\TT,\mathbb{R})$ is dense in $\B^{p,q}$ for any $r'\geq p$. 
\item[iv)] In addition (and still from equation (2.2) in \cite{GL06}) the partial derivative $\partial_i : f \in C^p \mapsto \partial_i f \in C^{p-1}$ can be uniquely extended into a continuous operator  $\partial_i : \B^{p,q}\to \B^{p-1,q+1}$. 
\item[v)] Multiplicativity of the norms (see Lemma 3.2 in \cite{GL06}) : there is a $C>0$ such that for any $f\in \B^{p,q}$ and $\phi\in C^{p+q}$, $f\phi \in \B^{p,q}$ and 
$$
\|f\phi\|_{\B^{p,q}}\leq C\|f\|_{\B^{p,q}}\|\phi\|_{C^{p+q}}\quad
\mbox{and}\quad \left|\int \phi f \right|\leq C\|f\|_{\B^{p,q}}\|\phi\|_{C^{p}}.$$
\end{itemize}
\end{prop}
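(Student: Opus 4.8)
The plan is to prove each of the five items by reducing it to a result already established in \cite{GL06}, specialised to the present situation: a single deterministic map instead of a random composition, trivial weight $g\equiv 1$, and the fixed exponents $p=2$, $q=1$, $\rr=4$. For item i) I would invoke Lemma 7.2 of \cite{GL06}, which treats transfer operators for random compositions driven by a measure $\mu$ on the base and weighted by a function $g(\omega,x)$. Taking $\mu=\delta_\omega$ collapses the random composition to iteration of a single map $T$, and $g(\omega,x)\equiv 1$ produces exactly the unweighted operator $h\mapsto h\circ T^{-1}|\det D(T^{-1})|=\LG$ used here; the first step is therefore to check that these substitutions genuinely identify the operator of \cite{GL06} with our $\LG$, matching in particular the Jacobian normalisation. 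Lemma 7.2 then supplies a Lasota--Yorke estimate $\|\LG^n f\|_{\B^{p,q}}\le C\sigma^n\|f\|_{\B^{p,q}}+C\|f\|_{\B^{p-1,q+1}}$ with $C$ and $\sigma<1$ uniform over $T$ in a neighbourhood of $T_0$; this neighbourhood is open in the $C^{p+q+2}$ topology because $C^{p+q+2}$ is the regularity of $T$ required to keep the admissible cone fields and the local charts entering the norm $(2.2)$ under control. In particular $\LG_0:\B^{p,q}\to\B^{p,q}$ is bounded, and since $\|T_0-T_\delta\|_{C^{\rr+1}}\to 0$ forces $T_\delta$ into this neighbourhood for $\delta$ small, so is $\LG_\delta$.

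Items ii)--v) involve no dynamics and I would read them off the definition $(2.1)$--$(2.2)$ of $\|\cdot\|_{\B^{p,q}}$ in \cite{GL06}. For ii), the compactness of $\B^{p,q}\hookrightarrow\B^{p-1,q+1}$ is Lemma 2.1 of \cite{GL06}, and the norm comparison $\|f\|_{\B^{p,q}}\le\|f\|_{\B^{p+1,q-1}}$ follows by inspecting $(2.2)$: passing from $(p+1,q-1)$ to $(p,q)$ forbids one derivative on the argument while allowing one more on the test functions, which can only decrease the defining supremum. For iii), since $\B^{p,q}$ is a completion of a space of regular functions, it suffices to show that $C^{\rr'}$ functions with $\rr'\ge p$ are dense; I would obtain this by mollification on $\TT$, using that $(2.2)$ only involves $p$ derivatives of its argument, so that smooth approximants converge in $\|\cdot\|_{\B^{p,q}}$ even when they fail to converge in $C^q$. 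Item iv) is then the observation that one partial derivative of $f$ consumes one of the $p$ derivative slots in $(2.2)$ and frees one unit of transverse smoothness, so that $\|\partial_i f\|_{\B^{p-1,q+1}}\le\|f\|_{\B^{p,q}}$ holds directly from $(2.2)$ for $f\in C^p$, and $\partial_i$ extends by the density proved in iii). Finally, item v) is Lemma 3.2 of \cite{GL06} (multiplicativity of the norm), the second inequality being the special case of a constant test function.

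The only step I expect to require real care, as opposed to transcription, is item i): one must ensure that the specialisation $g\equiv 1$, $\mu=\delta_\omega$ of the framework of \cite{GL06} reproduces our normalisation of $\LG$, and that the neighbourhood produced by Lemma 7.2 is genuinely open in the $C^{p+q+2}$ topology as stated. Everything else is a direct appeal to Lemmas 2.1 and 3.2 and to the norm definition $(2.2)$ of \cite{GL06}, with only routine bookkeeping about derivative counts.
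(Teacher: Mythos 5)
Your proposal is correct and follows essentially the same route as the paper, which states this proposition as a recollection of results from \cite{GL06} and simply points to Lemma 7.2 (with $g\equiv 1$, $\mu=\delta_\omega$) for item i), Lemma 2.1 for ii), the norm definition (2.2) for iii) and iv), and Lemma 3.2 for v). Your additional remarks on verifying the specialisation in Lemma 7.2 and on the derivative-counting behind items ii)--iv) are consistent fill-ins of details the paper leaves implicit.
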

Theorem 2.3 \cite{GL06} states that $\LG_0:\B^{p,q}\mapsto \B^{p,q}$ is quasi-compact with essential spectral radius bounded above by $\max\{1/\lambda^p,\nu^q\}$, where $\lambda>1$ (resp.\ $\nu<1$) is a lower (resp.\ upper) bound on the expansion (resp.\ contraction) along unstable (resp.\ stable) tangent directions.

\section{The derivative of the transfer operator and linear response}
\label{sec:derivLR}

\subsection{Expressions for the derivative of the transfer operator}
\label{sec:deriv}

For $f\in C^1(\TT,\mathbb{R}^2)$, define the divergence of $f$, $\nabla \cdot f(x):=\Tr(D_xf)$, where $\Tr(\cdot)$ is the trace operator for linear maps.
We let $\nabla \cdot :\B^{p,q} \to \B^{p-1,q+1}$ 
 denote an extension on $\B^{p,q}$ of the divergence operator (see Proposition \ref{lemmulti} item iv)).
For $E=\mathbb{R}^2$ and $A(x) \in L(E,E)$, a section of linear maps,
 $\nabla\cdot A$ stands for the classical divergence of a tensor\footnote{(see section 2C4 in \cite{lai}), certain books take an alternative definition for the divergence $\tilde \nabla$ with $\tilde \nabla \cdot A:=\nabla \cdot (A^t)$ with $A^t$ being the transpose of the matrix $A$.}  i.e.\ for any $x\in \TT$ and $a\in E$, 

\begin{equation*}
    \label{def:div}
 (\nabla \cdot A(x))\boldsymbol{\cdot} a:=\nabla \cdot \left(A(x)\boldsymbol{\cdot} a\right),
 \end{equation*}
where the larger dot $\boldsymbol{\cdot}$ denotes the relevant linear action on $a$.
The following theorem states the regularity of the transfer operator $\LG_\delta$ and its linear response along the lines of Theorem 2.7 from \cite{GL06}, but only requires the map $\delta \mapsto T_\delta$ to be $C^1$ in comparison to the $C^2$ regularity required by \cite{GL06}.

\begin{theorem}\label{prop:taylorcon} 
Let $p\in \NN\backslash \{0\}$ and {$q\geq 0$} such that $p+q=3$.
Then there is $\tilde \delta>0$ such that  
the map $\delta \in [0,\tilde \delta) \mapsto \LG_\delta\in \mathcal{B}^{p,q}$ is continuous, and there is a $C>0$ satisfying
{\begin{align}\label{eqmainthmcondelta}
\|\LG_\delta-\LG_0\|_{\B^{p,q}\to \B^{p-1,q+1}}\leq C\delta.    
\end{align}
}
When $p=2$ and $q=1$, the operator $\LG_\delta : \B^{2,1}\mapsto \B^{0,3}$  satisfies the following Taylor relation {in the parameter $\delta$:}
\begin{align}
    \label{eq:Ldotest}
        \sup_{f\in \B^{\p,\q},\|f\|_{\p,\q}\leq 1}\left\|\dot{\mathcal{L}}f-\frac{\LG_\delta f-\LG_0f}{\delta}\right\|_{0,3}={o(1)},
\end{align}
and $\dot{\mathcal{L}}:\mathcal{B}^{2,1}\to\mathcal{B}^{1,2}$ is continuous.
Finally, the derivative of the transfer operator \\
$\dot{\mathcal{L}} : \B^{2,1} \to \B^{1,2}\subset \B^{0,3}$  has the explicit representations:
\begin{align}
    \LGd f (x)&:=-\LG_0\left(\nabla \cdot \left(f(x)(D_x T_0)^{-1}(\dot T(x))\right)\right),\label{eq:divrep}\\
    &= -\LG_0\left(\langle D_xf, (D_xT_0)^{-1}\left(\dot T(x)\right)\rangle+ f(x)\langle \nabla\cdot (D_xT_0)^{-1},\dot T(x)\rangle+ f(x)\Tr((D_xT_0)^{-1}\circ D_x\dot T)\right) \label{eq:divexp}
\end{align}
\end{theorem}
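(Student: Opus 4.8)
The plan is to compare $\LG_\delta$ with $\LG_0$ along the straight segment $T_s:=T_0+s(T_\delta-T_0)$, $s\in[0,1]$, in map space, rather than along the given family $\delta\mapsto T_\delta$. The key point is that this segment is affine --- hence $C^\infty$ --- in $s$, so no second-order information on $\delta\mapsto T_\delta$ is ever used; and since $\|T_s-T_0\|_{C^{5}}\le\|T_\delta-T_0\|_{C^{5}}=O(\delta)$ by \eqref{Ttaylor}, for $\delta<\tilde\delta$ small enough every $T_s$ lies in the neighbourhood $U$ of $T_0$ from Proposition~\ref{lemmulti}(i). Hence, writing $\LG_{T_s}$ for the transfer operator of $T_s$, each $\LG_{T_s}$ is bounded on $\B^{p',q'}$ for all $p'+q'<r$, with operator norm bounded uniformly in $s\in[0,1]$ (after possibly shrinking $U$, using the locally uniform Lasota--Yorke estimates of \cite{GL06}).

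The first step is the master identity: for $g$ in the dense subspace $C^{r'}(\TT,\RR)$, $r'\ge p$ (Proposition~\ref{lemmulti}(iii)),
\begin{equation}\label{eq:master}
\LG_\delta g-\LG_0 g=-\int_0^1\LG_{T_s}\!\Bigl(\nabla\cdot\bigl(g\,(D_\cdot T_s)^{-1}(T_\delta-T_0)\bigr)\Bigr)\,ds .
\end{equation}
For smooth $g$, the map $s\mapsto\LG_{T_s}g=(g\circ T_s^{-1})\,|\det D(T_s^{-1})|$ is $C^1$ into $C^{3}(\TT)$, which embeds continuously in $\B^{p-1,q+1}$, so \eqref{eq:master} follows from the fundamental theorem of calculus once $\partial_s(\LG_{T_s}g)$ is identified. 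Differentiating the duality relation $\int\varphi\cdot\LG_{T_s}g=\int(\varphi\circ T_s)\,g$, changing variables on $\TT$, and integrating by parts gives $\partial_s(\LG_{T_s}g)=-\nabla\cdot\bigl((\LG_{T_s}g)\,((T_\delta-T_0)\circ T_s^{-1})\bigr)$; the intertwining identity $\nabla\cdot\bigl((\LG_S h)(V\circ S^{-1})\bigr)=\LG_S\bigl(\nabla\cdot(h\,(D_\cdot S)^{-1}V)\bigr)$, valid for any diffeomorphism $S$, scalar $h$ and vector field $V$ and proved the same way, then turns this into the integrand of \eqref{eq:master}. The right-hand side of \eqref{eq:master} is a bounded operator $\B^{p,q}\to\B^{p-1,q+1}$: multiplicativity (Proposition~\ref{lemmulti}(v)) bounds $\|g\,(D_\cdot T_s)^{-1}(T_\delta-T_0)\|_{\B^{p,q}}$ by $C\|g\|_{\B^{p,q}}$ times a $C^{p+q}$-norm that is $O(\delta)$ (as $\|T_\delta-T_0\|_{C^{p+q}}=O(\delta)$ and $\|(D_\cdot T_s)^{-1}\|_{C^{p+q}}$ is uniformly bounded), $\nabla\cdot$ is bounded $\B^{p,q}\to\B^{p-1,q+1}$ componentwise (Proposition~\ref{lemmulti}(iv)), and $\LG_{T_s}$ is bounded on $\B^{p-1,q+1}$ uniformly in $s$. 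Since both sides of \eqref{eq:master} are bounded operators $\B^{p,q}\to\B^{p-1,q+1}$ agreeing on a dense set, \eqref{eq:master} holds for all $g\in\B^{p,q}$, and the bound on its right-hand side yields \eqref{eqmainthmcondelta}. Running the same argument with $(T_0,T_\delta)$ replaced by any two maps near $T_0$ gives $\|\LG_T-\LG_{T'}\|_{\B^{p,q}\to\B^{p-1,q+1}}\le C\|T-T'\|_{C^{p+q}}$, i.e.\ the Lipschitz --- hence continuous --- dependence of $\delta\mapsto\LG_\delta$ in $L(\B^{p,q},\B^{p-1,q+1})$.

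Now fix $p=2$, $q=1$ and \emph{define} $\LGd g:=-\LG_0\bigl(\nabla\cdot(g\,(D_\cdot T_0)^{-1}\dot T)\bigr)$, which is \eqref{eq:divrep}; expanding $\nabla\cdot(g\,(D_\cdot T_0)^{-1}\dot T)$ with the Leibniz rule and the definition of the tensor divergence (footnote) gives \eqref{eq:divexp}. Since $(D_\cdot T_0)^{-1}\dot T\in C^{4}\subset C^{3}$, multiplicativity, $\nabla\cdot:\B^{2,1}\to\B^{1,2}$ (Proposition~\ref{lemmulti}(iv)), and boundedness of $\LG_0$ on $\B^{1,2}$ (Proposition~\ref{lemmulti}(i)) show that $\LGd:\B^{2,1}\to\B^{1,2}$ is bounded, hence continuous. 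For the Taylor relation \eqref{eq:Ldotest}, set $M_s^\delta:=(D_\cdot T_s)^{-1}\bigl((T_\delta-T_0)/\delta\bigr)$ and $M_0:=(D_\cdot T_0)^{-1}\dot T$; then \eqref{eq:master} gives
\begin{equation*}
\frac{\LG_\delta g-\LG_0 g}{\delta}-\LGd g=-\int_0^1\Bigl[\LG_{T_s}\!\bigl(\nabla\cdot(g\,M_s^\delta)\bigr)-\LG_0\!\bigl(\nabla\cdot(g\,M_0)\bigr)\Bigr]\,ds ,
\end{equation*}
and I split the bracket as $\LG_{T_s}\bigl(\nabla\cdot(g(M_s^\delta-M_0))\bigr)+(\LG_{T_s}-\LG_0)\bigl(\nabla\cdot(g\,M_0)\bigr)$. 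By \eqref{Ttaylor}, $(T_\delta-T_0)/\delta\to\dot T$ in $C^{5}$, and by continuity of $\delta\mapsto T_\delta$ in $C^{5}$, $(D_\cdot T_s)^{-1}\to(D_\cdot T_0)^{-1}$ in $C^{4}$ uniformly in $s$; hence $\|M_s^\delta-M_0\|_{C^{3}}\to0$ uniformly in $s\in[0,1]$, and together with the uniform bound on $\|\LG_{T_s}\|_{\B^{1,2}\to\B^{1,2}}$, multiplicativity, and $\nabla\cdot:\B^{2,1}\to\B^{1,2}$, the first term is $o(1)$ in $\B^{1,2}\subset\B^{0,3}$ uniformly over $\|g\|_{\B^{2,1}}\le1$. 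The second term is bounded in $\B^{0,3}$ by $\|\LG_{T_s}-\LG_0\|_{\B^{1,2}\to\B^{0,3}}\,C\|g\|_{\B^{2,1}}\le C'\|T_s-T_0\|_{C^{3}}\le C''\delta$ via \eqref{eqmainthmcondelta} applied to $(T_s,T_0)$, again uniformly. This establishes \eqref{eq:Ldotest} and identifies $\LGd$ as the derivative.

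I expect the main obstacle to be making the master identity \eqref{eq:master} rigorous at the available regularity: checking that $s\mapsto\LG_{T_s}g$ really is $C^1$ as a Banach-space-valued map for smooth $g$, that the interpolated Anosov maps $T_s$ have transfer operators with $\B^{p',q'}$-operator norms bounded uniformly in $s$ (which may force shrinking $U$ and relies on the constructions of \cite{GL06}), and that the change-of-variables and integration-by-parts steps behind $\partial_s(\LG_{T_s}g)$ and the intertwining identity are justified on $\TT$. Once this is in place, the only input from the family $\delta\mapsto T_\delta$ is $\|T_\delta-T_0\|_{C^{5}}=O(\delta)$ together with $(T_\delta-T_0)/\delta\to\dot T$ in $C^{5}$ --- exactly $C^1$-differentiability at $\delta=0$ --- because the interpolation is affine in $s$; this is what lowers the regularity requirement on the family from $C^{2}$ to $C^1$.
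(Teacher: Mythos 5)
Your proposal is correct, but it is organised quite differently from the paper's proof, so a comparison is worthwhile. The paper obtains the explicit formula \eqref{eq:divrep} by a direct pointwise differentiation of $f\circ T_\delta^{-1}/|\det DT_\delta|$ at $\delta=0$, using the $C^1$-dependence of $T\mapsto T^{-1}$ (local inversion applied to $\Psi(T_1,T_2)=T_1\circ T_2$) followed by a sequence of chain-rule manipulations to recombine the resulting terms into divergence form; it then proves \eqref{eqmainthmcondelta} and \eqref{eq:Ldotest} separately, by Taylor-expanding the composition operators $t\mapsto f\circ\bigl(tT_\delta^{-1}+(1-t)T_0^{-1}\bigr)$ --- an affine interpolation of the \emph{inverse} maps --- to zeroth and first order respectively, treating the Jacobian factor on its own and splitting the first-order error into four pieces $H_1,\dots,H_4$. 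You instead interpolate the \emph{forward} maps affinely, derive the divergence form structurally by duality and integration by parts (your intertwining identity), and package everything into a single exact Duhamel-type identity
\begin{equation*}
\LG_\delta g-\LG_0 g=-\int_0^1\LG_{T_s}\Bigl(\nabla\cdot\bigl(g\,(D_\cdot T_s)^{-1}(T_\delta-T_0)\bigr)\Bigr)\,ds,
\end{equation*}
from which the Lipschitz bound \eqref{eqmainthmcondelta} and the Taylor relation \eqref{eq:Ldotest} both follow by estimating $\|M_s^\delta-M_0\|_{C^3}$ and $\|\LG_{T_s}-\LG_0\|_{\B^{1,2}\to\B^{0,3}}$. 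This buys two things: the divergence form of $\LGd$ appears for a structural reason rather than emerging from computation, and the exact first-order identity eliminates the second-order integral remainder $G$ and the four-way splitting of the paper's Step 4. The price is that your argument needs the interpolants $T_s=T_0+s(T_\delta-T_0)$ to remain Anosov diffeomorphisms whose transfer operators have $\B^{p',q'}$-norms bounded uniformly in $s$ (a locally uniform Lasota--Yorke estimate from \cite{GL06}, which you correctly identify as the main point to check, and which holds for $\delta$ small since the Anosov property is $C^1$-open), whereas the paper only needs bounds for the weighted composition operators $\tilde\LG_s$, to which Lemma 7.2 of \cite{GL06} applies directly without requiring the interpolated object to be the inverse of an Anosov map. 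Both routes consume exactly the same input from the family, namely $\|T_\delta-T_0\|_{C^5}=O(\delta)$ and $(T_\delta-T_0)/\delta\to\Td$ in $C^5$, so your argument preserves the paper's $C^1$-only regularity hypothesis on $\delta\mapsto T_\delta$.
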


\begin{proof}
\,
\paragraph{Step 1: Derivation of \eqref{eq:divrep} and \eqref{eq:divexp} for smooth $f$.}
We assume for now that $f \in C^{\infty}(\TT,\mathbb{R})$ and we will later expand the following computations of $\LGd$ to any $f \in \B^{\p,\q}$ by density of $C^{\infty}(\TT,\mathbb{R})$ in $\|\cdot \|_{\p,\q}$.
\begin{align}
\dot{\LG}f(x)&:=\partial_{|\delta=0}\frac{f\circ T_\delta^{-1}(x)}{|\det(D_{T_\delta^{-1}x}(T_\delta))|}\label{eq:ldotderiv1}\\
\nonumber&=|\det(D_x(T_0^{-1}))|\partial_{|\delta=0} \left(f\circ T_\delta^{-1}(x)\right)+(f\circ T_0^{-1}(x))\partial_{|\delta=0}\left(|\det(D_x(T_\delta^{-1}))|\right)\\
\label{eq:dotTprelim}&=|\det(D_x(T_0^{-1}))|D_{T_0^{-1}x}f\circ\left(\partial_{|\delta=0} T_\delta^{-1}(x)\right)+(f\circ T_0^{-1}(x))\partial_{|\delta=0}\left(|\det(D_x(T_\delta^{-1}))|\right).
\end{align}
We will thus compute the respective derivatives $\partial_{|\delta=0} T_\delta^{-1}$ and $\partial_{|\delta=0}\left(|\det(D(T_\delta^{-1}))|\right)$ in $C^3$ 
and show that
\begin{align}
T_\delta^{-1}(x)=&T_0^{-1}(x)+\delta\partial_\delta (T_\delta^{-1})(x)+{o_{C^3}(\delta)}\label{eqdifftmoins1}\\
|\det(D_x(T_\delta^{-1}))|&=|\det(D_x(T_0^{-1}))|+\delta\partial_{|\delta=0}\left(|\det(D_x(T_\delta^{-1}))|\right)+{ o_{C^3}(\delta)}.\label{eqdifftmoins2}
\end{align}

We start with equation \eqref{eqdifftmoins1}, the local inversion technique
applied to\footnote{The map $T$ is taken in $C^{r+1}$ in order to keep $o(\delta)$ control in the $C^3$ topology in the following developments.
} the $C^{1}$ map $\Psi : C^4(\TT)\times C^{3}(\TT) \mapsto C^{3}(\TT)$ given by $\Psi (T_1,T_2)=T_1\circ T_2$, which satisfies $\Psi( T_\delta,T_\delta^{-1})=I$.
This ensures the existence of a $C^1$ map $\tilde \Psi: T\in C^4\mapsto T^{-1}\in C^3$ with derivative at $T\in C^4$ in the direction $H\in C^4$ given by
\begin{align}
   \partial_T\tilde \Psi(H)&=-\left(\partial_2\Psi(T,T^{-1})\right)^{-1}\circ\partial_{1}\Psi(T,T^{-1})\nonumber\\
   &=-(D_{T^{-1}x}T)^{-1}(H\circ T^{-1}),\label{eqdeltatmoinsun}
\end{align}
where the following derivatives have been computed explicitly:
\begin{align}
\label{D1}
&\left(\partial_1\Psi(T,T^{-1})\right)(H)=H\circ T^{-1}\\
&(\partial_{2}\Psi(T,T^{-1}))^{-1}(H)=\left(D_{T^{-1}x}T\right)^{-1}(H)=D_x(T^{-1})(H).\label{eqddeuxpsi}
\end{align}
Since $T\in C^4\mapsto T^{-1}\in C^3$ is a $C^1$ map in the $C^3$ topology, 
\begin{align}
T_\delta^{-1}=T_0^{-1}+\delta\partial_\delta (T_\delta^{-1})+o_{C^3}(\delta)   
\end{align}
 where $\partial_\delta (T_\delta^{-1})$ can be detailed as follows :
Taking the $\delta$-derivative of $\Psi(T_\delta,T_\delta^{-1})$ we obtain 
$$
\partial_1\Psi(T_0,T_0^{-1})\circ \partial_{|\delta=0} T_\delta + \partial_2\Psi(T_0,T_0^{-1})\circ \partial_{|\delta=0} T_\delta^{-1} = 0,
$$
thus 
using relations {\eqref{D1} and}  \eqref{eqddeuxpsi} with $\partial_{|\delta=0}T_\delta=\Td$,
\begin{align}
\partial_{|\delta=0} (T_\delta^{-1})(x)&=-\left(\partial_2\Psi(T_0,T_0^{-1})\right)^{-1}\circ \partial_1\Psi(T_0,T_0^{-1})\circ \partial_{|\delta=0} T_\delta \nonumber\\
&=-D_{x}(T_0^{-1})\circ \Td \circ T_0^{-1}(x).\label{eq:Tdelinv}
\end{align}
As for equation \eqref{eqdifftmoins2}, notice that $\det(D(T_\delta^{-1}))$ has constant sign in a neighbourhood of $T_0^{-1}$, thus we can ignore the absolute value $|.|$ 
and compute $\partial_{|\delta=0}\left(|\det(D(T_\delta)^{-1})|\right)$ by composition of the derivative of the $C^2$ map given by $L(\mathbb{R}^2)\ni A \mapsto \det(A)$ with $D\partial_{|\delta=0} (T_\delta^{-1})$ we computed above\footnote{indeed we can ignore the change of sign change from $|\cdot|$ in $\partial_\delta |\det(D_x(T_\delta)^{-1})|$: for an operator $A$, $D_A(\det(A))(H)=\det(A)\Tr(AH)$ and $D_A(-\det(A))(H)=-\det(A)\Tr(AH)$, thus whenever $|\det(A)|\neq 0$, $D_A(|\det(A)|)$ is well defined and $D_A(|\det(A)|)(H)=|\det(A)|\Tr(AH)$.}.
The Taylor expansion in the $C^3$ topology from \eqref{eqdifftmoins1} allows for the following expansion in the $C^2$ topology,
\begin{align*}
D_{x}T_\delta^{-1}=&D_xT_0^{-1}(x)+\delta D_x\partial_\delta (T_\delta^{-1})(x)+{o_{C^2}(\delta)}.
\end{align*}
We now need to consider $T_0$ to have its full regularity, namely $C^5$,
to make the above error term in {$o_{C^3}(\delta)$}. Assuming $T_\delta \in C^{5}$, from the formula 
\eqref{eqdeltatmoinsun} of the derivative of $\tilde \Psi :T\in  C^{5}\mapsto C^3$ we deduce that $T \in C^{5}\mapsto (x\mapsto D_x\tilde \Psi(T))\in C^3$ is $C^{1}$
and therefore
\begin{align*}
D_{x}T_\delta^{-1}=&D_xT_0^{-1}(x)+\delta D_x\partial_\delta (T_\delta^{-1})(x)+{o_{C^3}(\delta)}.
\end{align*}
Then we apply the polynomial map $\det(\cdot)$, yielding
\begin{align}
\label{eq:det}\det(D_x(T_\delta^{-1}))=\det(D_x(T_0^{-1}))+\delta \partial_{|\delta=0}\left(\det(D_x(T_\delta^{-1}))\right)+o_{C^3}(\delta).
\end{align}

Applying the definition \eqref{eq:defldelta} for the transfer operator $\LG_\delta$ and the respective Taylor expansions \eqref{eqdifftmoins1} and \eqref{eqdifftmoins2}, we arrive at the following Taylor expansion for $\LG_\delta$ at $\delta=0$:
\begin{align*}
    \LG_\delta f-\LG_0f=&\frac{f\circ T_\delta^{-1}}{|\det(D_{T_\delta^{-1}x}T_\delta)|}-\frac{f\circ T_0^{-1}}{|\det(D_{T_0^{-1}x}T_0)|}\\
    =&\frac{f\circ T_\delta^{-1}-f\circ T_0^{-1}}{|\det(D_{T_\delta^{-1}x}T_\delta)|}+f\circ T_0^{-1}\left(\frac{1}{|\det(D_{T_\delta^{-1}x}T_\delta)|}-\frac{1}{|\det(D_{T_0^{-1}x}T_0)|}\right)\\
    =&\frac{\delta\cdot(D_{T_0^{-1}x}f)\partial_{\delta}(T_\delta^{-1})+\|f\|_{C^3}o_{C^{3}}(\delta)}{|\det(D_{T_\delta^{-1}x}T_\delta)|}+f\circ T_0^{-1}\left(\delta \cdot\partial_{|\delta=0}\left(\det(D_x(T_\delta^{-1}))\right)+o_{C^3}(\delta)\right)\\
     =&\delta\cdot\LGd f+o_{C^3}(\delta)(\|f\|_{C^3}+1)\\
     &+\left(\delta\cdot(D_{T_0^{-1}x}f)\partial_{\delta}(T_\delta^{-1})+\|f\|_{C^3}o_{C^{3}}(\delta)\right)\left(\frac{1}{|\det(D_{T_0^{-1}x}T_0)|}-\frac{1}{|\det(D_{T_\delta^{-1}x}T_\delta)|}\right)\\
    =&\delta\cdot\LGd f+o_{C^3}(\delta)(\|f\|_{C^3}+1),
\end{align*}
where in the final line we recall that $\dot{\LG}$ is given by \eqref{eq:dotTprelim}.
Thus for $f\in C^\infty(\TT)$, $\LG_\delta f=\LG_0f+\delta \LGd f+o_{C^3}(\delta)(\|f\|_{C^3}+1)$.
Because this Taylor expansion is uniform in  $C^2$ we can interchange the integral and derivative in the following computation and obtain the explicit formula of $\partial_{\delta=0}\LG_{\delta}f=\LGd f$ for $f\in C^\infty(\TT)$ :
Let $f,g\in C^\infty(\TT)$, then

\begin{align}
\int_{\mathbb{T}^2} \partial_{\delta=0}
\big( \LG_\delta f(x) \big)\, g(x)
&= \partial_{\delta=0}
\left( \int_{\mathbb{T}^2} \LG_\delta f(x)\, g(x)\right)\\
&= \partial_{\delta=0} \left( \int_{\mathbb{T}^2} f(x)\, g\circ T_\delta (x) \right)\\
&= \int_{\mathbb{T}^2} f(x)\, \partial_{\delta=0}(g\circ T_\delta(x))\\
&= \int_{\mathbb{T}^2} f(x)\, \partial_{\delta=0}(g\circ T_0\circ T_0^{-1}\circ T_\delta(x))\label{eq:dualderiv1}\\
&= \int_{\mathbb{T}^2} f(x)\, D_{x}(g\circ T_0)\left(D_{T_0x}T_0^{-1}( \partial_{\delta=0}T_\delta(x))\right)\label{eq:dualderiv2}\\
&= \int_{\mathbb{T}^2} f(x)\,  D_{x}(g\circ T_0)\circ ((D_x T_0)^{-1} (\dot{T}(x))) \\
&=-\int_{\mathbb{T}^2} \LG_0\left(\nabla\cdot \left(f(x)  (D_x T_0)^{-1} (\dot{T}(x))\right)\right) g
\end{align}
where we obtained the last line by integrating by part using the Green Formula {(see e.g.\ the formula (9.3) from \cite{Taylor11})} for the derivative of $g$ along the vector field $Y(x) = (D_x T_0)^{-1}( \dot{T}(x))$. The passage from line \eqref{eq:dualderiv1} to \eqref{eq:dualderiv2} used the chain rule and the fact that $D_{T_0x}T_0^{-1}=(D_{x}T_0)^{-1}$.
Finally, we expanded formula \eqref{eqdifftmoins1} using the formula for the divergence of tensor (see formula (2C4.3) in \cite{lai}): for $\mathfrak{T}$ a tensor field and $X$ a vector field,
\begin{equation}
   \nonumber
\nabla \cdot (\mathfrak{T}X)=\langle (\nabla \cdot \mathfrak{T}),X\rangle +\Tr( \mathfrak{T}\circ \nabla X),
\end{equation}
which with $\mathfrak{T}(x)=(D_xT_0)^{-1}$ and $X(x)=\dot{T}(x)$ yields 
\begin{align}
\nonumber\LGd f (x)&=-\LG_0\left(\nabla \cdot \left(f(x)(D_x T_0)^{-1}\circ\dot T(x)\right)\right) \\
\label{eq:expandeddivrep}  &= -\LG_0\left(\langle D_xf, (D_xT_0)^{-1}\circ\dot T(x)\rangle+ f(x)\langle \nabla\cdot (D_xT_0)^{-1},\dot T(x)\rangle+ f(x)\Tr((D_xT_0)^{-1}\circ D_x\dot T)\right),
\end{align}
thus proving equation \eqref{eqdifftmoins2} for any $f \in C^{\infty}(\TT,\mathbb{R})$.

\paragraph{Step 2: Well-definedness and continuity of $\dot{\mathcal{L}}:\mathcal{B}^{2,1}\to\mathcal{B}^{1,2}$.}
Notice that according to item iv) from Proposition \ref{lemmulti}, the map $D_xf$ extends to a continuous map $f\in \B^{2,1} \mapsto D_xf \in \B^{1,2}$. 
Using the continuity of $\LG_0: \B^{p,q}\mapsto \B^{p,q}$ for $p+q=3$ (see Proposition \ref{lemmulti}), we deduce that the operator given by equation \eqref{eq:divexp} extends to a continuous operator  $\LGd : \B^{2,1}\to \B^{1,2}$ as well as a continuous operator $\LGd : \B^{1,2}\to \B^{0,3}$ 
\paragraph{Step 3: Derivation of \eqref{eqmainthmcondelta}.}
To prove equation \eqref{eqmainthmcondelta}, we introduce for $f\in C^\infty$ the map $t\mapsto \tilde \LG_t f:=f\circ (tT_{\delta}^{-1}+(1-t)T_0^{-1})$.
The {zeroth-order} Taylor expansion of this map about $t=0$ and evaluated at $t=1$ is
$f\circ T_\delta^{-1} =\tilde \LG_1 f
    = f\circ T_0^{-1} +G_0,
$
where 
\begin{align*}
G_0(\cdot):&=\int_0^1 (1-s)D_{sT_\delta^{-1}(\cdot)+(1-s)T_0^{-1}(\cdot)}f(\cdot)((T_\delta^{-1}-T_0^{-1})(\cdot))\ ds \nonumber\\
&=\int_0^1 (1-s) \tilde \LG_s\left(\partial_1f\right)(\cdot) (T_\delta^{-1}(\cdot)-T_0^{-1}(\cdot))_1\ ds\\
\nonumber&+\int_0^1 (1-s) \tilde \LG_s\left(\partial_2f\right)(\cdot) (T_\delta^{-1}(\cdot)-T_0^{-1}(\cdot))_2\ ds.
\end{align*}

According to\footnote{more precisely it is a consequence of Lasota--Yorke inequality (7.7) from \cite{GL06} when choosing $g(x,\omega)=\frac{1}{|\det(D_x(sT_\delta^{-1}(\cdot)+(1-s)T_0^{-1})|}$ and $\mu:=\delta_{sT_\delta^{-1}(\cdot)+(1-s)T_0^{-1}}$ in their notation.}
Lemma 7.2 in \cite{GL06}, the family $\tilde \LG_s$ are continuous on $\B^{2,1}$ 
and
 by equation \eqref{eqdifftmoins1}, $\|T_\delta^{-1}-T_0^{-1}\|_{C^3}\leq C\delta$. Thus using items v), i), and iv) from Proposition \ref{lemmulti}, 
\begin{align*}
\|G_0\|_{p-1,q+1}&\leq \int_0^1 (1-s) \|\tilde \LG_s\left(\partial_1f\right)\|_{p-1,q+1} \|(T_\delta^{-1}(\cdot)-T_0^{-1}(\cdot))_1\|_{C^3}ds\\
&+\int_0^1 (1-s) \|\tilde \LG_s\left(\partial_2f\right)\|_{p-1,q+1} \|(T_\delta^{-1}(\cdot)-T_0^{-1}(\cdot))_2\|_{C^3}ds\\
&\leq C \delta \int_0^1 (1-s) \|\tilde \LG_s\left(\partial_1f\right)\|_{p-1,q+1} ds+C \delta \int_0^1 (1-s) \|\tilde \LG_s\left(\partial_2f\right)\|_{p-1,q+1} ds\\
&\leq 2C_0 \delta \| f\|_{p,q}.
\end{align*}

Recall from step 1 that we prove that $T\in C^5\mapsto D_xT^{-1}\in C^3$ is a $C^1$ map, thus the map $\delta\mapsto \det(D(T_\delta^{-1}))$ is 
 differentiable at $\delta=0$.
Similarly, {recalling that $D_\cdot T_0^{-1}$ has constant sign for $T_\delta$ in a neighborhood of $T_0$,}
the map $\delta\mapsto |\det(D(T_\delta^{-1}))|$ is {differentiable at $\delta=0$}, and noting that $\mathcal{L}_\delta=|\det(D(T_\delta^{-1}))|\tilde{\mathcal{L}}_1$, we deduce
\begin{align*}
    \|\LG_\delta f-\LG_0 f\|_{p-1,q+1}&\leq \||\det(D(T_\delta^{-1}))|\tilde \LG_1 f-\LG_0 f\|_{p-1,q+1}\\
    &\leq\||\det(D(T_\delta^{-1}))|-|\det(D(T_0^{-1}))|\|_{C^3}\|\tilde \LG_1 f\|_{p-1,q+1}\\
    &\qquad+\||\det(D(T_0^{-1}))|\tilde \LG_1 f-\LG_0 f\|_{p-1,q+1}\\
    &\leq {O(\delta)}\|\partial_\delta |\det(D(T_\delta^{-1}))|\|_{C^3}\|\tilde \LG_1 f\|_{p-1,q+1}+\||\det(D(T_0^{-1}))|\|_{C^3}\|G_0\|_{p-1,q+1}\\
    &\leq {O(\delta)}\|f\|_{p,q}.
\end{align*}

Since $C^\infty$ is dense into $\B^{p,q}$ the inequality above extends to $\mathcal{B}^{p,q}$. This proves \eqref{eqmainthmcondelta}, namely there is $C>0$ such that
\begin{align*}
\|\LG_\delta-\LG_0\|_{\B^{p,q}\to \B^{p+1,q-1}}\leq C\delta. 
\end{align*}

\paragraph{Step 4: Derivation of \eqref{eq:Ldotest}.}
Finally, we focus on the proof of \eqref{eq:Ldotest}, {where $f\in \mathcal{B}^{2,1}$.} 
{We first consider $f\in C^\infty$ and} control $\LG_\delta f:= \frac{f\circ T_\delta^{-1}}{|\det(DT_\delta)|}$ in several steps.
First fix $\delta>0$ sufficiently small and focus on the term $f\circ T_\delta^{-1}$. We compute the {first-order} Taylor expansion of $t\mapsto \tilde \LG_t f:=f\circ (tT_{\delta}^{-1}+(1-t)T_0^{-1})$ for  $f\in C^\infty$: 
\begin{align}
    \tilde \LG_1 f(\cdot)=f\circ T_0^{-1}(\cdot) + (D_{T_0^{-1}(\cdot)}f)(T_\delta^{-1}(\cdot)-T_0^{-1}(\cdot)) + G(\cdot).\label{eq:taylorexp}
\end{align}
with $G(\cdot)$ the error term in the Taylor expansion which can be written as follows,
\begin{align}
G(\cdot):&=\int_0^1 \frac{(1-s)^2}{2!}\left[D^2_{sT_\delta^{-1}(\cdot)+(1-s)T_0^{-1}(\cdot)}f ((T_\delta^{-1}-T_0^{-1})(\cdot))\right](T_\delta^{-1}-T_0^{-1})(\cdot))ds \nonumber\\
&=\sum_{1\leq i, j\leq 2}\frac{1}{2}\int_0^1 (1-s)^2 \tilde \LG_s\left(\partial_i\partial_jf\right) (T_\delta^{-1}(\cdot)-T_0^{-1}(\cdot))_i(T_\delta^{-1}(\cdot)-T_0^{-1}(\cdot))_jds    \label{eq:taylorint2}
\end{align}
One can check that the terms in \eqref{eq:taylorexp} are in $\B^{0,3}$
{using the same reasoning as for $G_0$ in step 3 and the fact that by Proposition \ref{lemmulti} item iv) $\partial_i\partial_j f\in \B^{0,3}$ whenever $f\in \B^{2,1}$ (and here we have $f\in C^\infty\subset \B^{2,1}$).}
We now pass the norm $\|\cdot\|_{\B^{p,q}}$  into the integral of \eqref{eq:taylorint2}. Thus, using item iv) from Proposition \ref{lemmulti},
\begin{align*}
   \left\|G\right\|_{0,3}&\leq C\sum_{1\leq i,j\leq 2}\frac{1}{2}\int_0^1 (1-s)^2 \|\tilde \LG_s\left(\partial_i\partial_jf\right)\|_{0,3}\| (T_\delta^{-1}-T_0^{-1})_i(T_\delta^{-1}-T_0^{-1})_j\|_{C^3}ds\nonumber\\
    &\leq C\sum_{1\leq i,j\leq 2}\frac{1}{2} \| (T_\delta^{-1}-T_0^{-1})_i(T_\delta^{-1}-T_0^{-1})_j\|_{C^3} \|f\|_{2,1}\int_0^1 (1-s)^2ds.
\end{align*}

We passed to the last line of the equation 
using the continuity of $\tilde \LG_t : \B^{0,3} \mapsto \B^{0,3}$ (referring again to \cite{GL06} Lemma 7.2).
Recalling that $T_\delta=T_0+\delta \Td +o_{C^5}(\delta)$, and that $C^4\ni T \mapsto T^{-1}\in C^3$ is {$C^1$} around $T_0$ we obtain
\begin{align*}
    \| (T_\delta^{-1}-T_0^{-1})_i(T_\delta^{-1}-T_0^{-1})_j\|_{C^3}\leq C\delta^2.
\end{align*}
and thus 
\begin{align}\label{eq:gtroi}
    \left\|G\right\|_{0,3}\leq C\delta^2\|f\|_{2,1}.
\end{align}
Since $\mathcal{L}_\delta=\tilde{\mathcal{L}_1}/|\det(D_{T_\delta^{-1}x}T_\delta)|$, by \eqref{eq:taylorexp}, we have that
\begin{align}\label{eq:expandldelta}
    \LG_\delta f(x)=\frac{1}{|\det(D_{T_\delta^{-1}x}T_\delta)|}f\circ T_0^{-1}(x) +\frac{1}{|\det(D_{T_\delta^{-1}x}T_\delta)|}(D_{T_0^{-1}}f)(T_\delta^{-1}(x)-T_0^{-1}(x)) + \frac{G(x)}{|\det(D_{T_\delta^{-1}x}T_\delta)|}.
\end{align}
We recall the following expanded expression of $\LGd$ from \eqref{eq:ldotderiv1},
\begin{align*}
    \partial_{\delta=0} \frac{f\circ T_\delta^{-1}(\cdot)}{|\det(D_{T_\delta^{-1}(\cdot)}T_{\delta})|}=\frac{1}{|\det(D_{T_0^{-1}(\cdot)}T_{0})|}\partial_\delta \left(f\circ T_\delta^{-1}(\cdot)\right)+f\circ T_0^{-1}(\cdot)\partial_{\delta=0} \frac{1}{|\det(D_{T_\delta^{-1}(\cdot)}T_{\delta})|},
\end{align*}
By combining this with the expansion  \eqref{eq:expandldelta} we obtain the following expression,
\begin{align}
    &\LG_\delta f-\LG_0 f- \delta \dot{\LG}f\nonumber \\
=&\frac{1}{|\det(D_{T_\delta^{-1}(\cdot)}T_{\delta})|}f\circ T_0^{-1}-\LG_0 f-\delta f\circ T_0^{-1}(\cdot)\partial_{\delta=0} \frac{1}{|\det(D_{T_\delta^{-1}(\cdot)}T_{\delta})|}\label{eqh1}\\
&+\frac{1}{|\det(D_{T_\delta^{-1}(\cdot)}T_\delta)|}(D_{T_0^{-1}(\cdot)}f)(T_\delta^{-1}(\cdot)-T_0^{-1}(\cdot)) - \delta \frac{1}{|\det(D_{T_\delta^{-1}(\cdot)}T_{\delta})|}\partial_\delta \left(f\circ T_\delta^{-1}(\cdot)\right)\label{eqh2}\\
&+\delta \frac{1}{|\det(D_{T_\delta^{-1}(\cdot)}T_{\delta})|}\partial_\delta \left(f\circ T_\delta^{-1}(\cdot)\right)- \delta \frac{1}{|\det(D_{T_0^{-1}(\cdot)}T_{0})|}\partial_\delta \left(f\circ T_\delta^{-1}(\cdot)\right)\label{eqh3}\\
&+\frac{1}{|\det(D_{T_\delta^{-1}(\cdot)}T_\delta)|}G.\label{eqh4}
\end{align}

Taking $H_1,H_2,H_3$ and $H_4$ as the respective terms \eqref{eqh1}, \eqref{eqh2}, \eqref{eqh3} and \eqref{eqh4} we will write
\begin{align*}
\left\|\LG_\delta f-\LG_0 f- \delta \dot{\LG}f \right\|_{0,3}&\leq H_1+H_2+H_3 +H_4.
\end{align*}

We then control $H_1$, $H_2$, $H_3$, and $H_4$ using
the multiplicativity of the norms (Proposition \ref{lemmulti} item v)) and the fact that $\|\cdot\|_{\B^{p,q}}\leq \|\cdot\|_{\B^{p+1,q-1}}$: we start with $H_1$ using relation \eqref{eqdifftmoins2},
\begin{align*}
    H_1&:=\|(f\circ T_0^{-1})\cdot \left(\frac{1}{|\det(D_{T_\delta^{-1}(\cdot)}T_\delta)|}-\frac{1}{|\det(D_{T_0^{-1}(\cdot)}T_0)|}\right)-\delta\cdot(f\circ T_0^{-1})\cdot \partial_\delta \frac{1}{|\det(D_{T_\delta^{-1}(\cdot)}T_\delta)|}\|_{0,3}\\
    &\leq C\|f\circ T_0^{-1}\|_{0,3}\cdot\|\left(\frac{1}{|\det(D_{T_\delta^{-1}(\cdot)}T_\delta)|}-\frac{1}{|\det(D_{T_0^{-1}(\cdot)}T_0)|}\right)-\delta\cdot \partial_\delta \frac{1}{|\det(D_{T_\delta^{-1}(\cdot)}T_\delta)|}\|_{C^3}\\
    &\le o(\delta)\|f\|_{2,1}.
\end{align*}
For $H_2$ we use the fact that $\|\frac{1}{|\det(D_{T_\delta^{-1}(\cdot)}T_\delta)|}\|_{C^3}$ is uniformly bounded for $\delta$ in a neighbourhood of $\delta=0$ 
\begin{align*}
    H_2&:=\|\frac{1}{|\det(D_{T_\delta^{-1}(\cdot)}T_\delta)|}\left( D_{T_0^{-1}(\cdot)}f (T_\delta^{-1}(\cdot)-T_0^{-1}(\cdot)) -\delta\partial_{\delta=0} \left(f\circ T_\delta^{-1}(\cdot)\right) \right)\|_{0,3}\\
    &\leq  \|\frac{1}{|\det(D_{T_\delta^{-1}(\cdot)}T_\delta)|}\|_{C^3}\|D_{T_0^{-1}(\cdot)}f\|_{0,3}\| (T_\delta^{-1}(\cdot)-T_0^{-1}(\cdot)) -\delta \partial_{\delta=0} T_\delta^{-1}(\cdot) \|_{C^3}\\
    &\le \|f\|_{1,2}o(\delta)\le \|f\|_{2,1}o(\delta),
\end{align*}
where the last line is a consequence of the control \eqref{eqdifftmoins1}.
As for $H_3$,
\begin{align}
H_3&:=\|\delta\left(\frac{1}{|\det(D_{T_\delta^{-1}(\cdot)}T_\delta)|}-\frac{1}{|\det(D_{(\cdot)}T_0)|}\right)\partial_{\delta=0}(f\circ T_\delta^{-1})\|_{0,3}\nonumber\\
    &\leq 2\delta \left\|\frac{1}{|\det(D_{T_\delta^{-1}(\cdot)}T_\delta)|}-\frac{1}{|\det(D_{(\cdot)}T_0)|} \right\|_{C^3} \|\partial_{\delta=0}(f\circ T_\delta^{-1})\|_{0,3}\nonumber\\
    &\leq 2\delta O(\delta) \|D_xf (\partial_{\delta=0}T_\delta^{-1})\|_{0,3}\nonumber\\
        &\leq O(\delta^2) \|f\|_{1,2} \|\partial_{\delta=0}T_\delta^{-1}\|_{C^3}\label{eqhtroi}\\
        &\leq \|f\|_{2,1}O(\delta^2),\nonumber
\end{align}
where we obtained line \eqref{eqhtroi} using the fact that the map $\tilde \Psi : T\in C^4\mapsto T^{-1}\in C^3$ is $C^1$ and item iv) from Proposition \ref{lemmulti} on the decomposition $D_xf (\partial_{\delta=0}T_\delta^{-1})=\partial_1f(\partial_{\delta=0}T_\delta^{-1})_1+\partial_2f(\partial_{\delta=0}T_\delta^{-1})_2$. 
The last term $H_4$, is controlled thanks to inequality \eqref{eq:gtroi},
\begin{align*}
    H_4:=\|G(\cdot)\frac{1}{|\det(D_{T_\delta^{-1}(\cdot)}T_\delta)|}\|_{\B_{0,3}}\leq O(\delta^2)\|f\|_{2,1}\|\frac{1}{|\det(D_{T_\delta^{-1}(\cdot)}T_\delta)|}\|_{C^3},
\end{align*}
We thus obtain the formula \eqref{eq:Ldotest} for all $f\in C^\infty$. 
Then by continuity of the operators $\LGd,\LG_\delta$ and $\LG_0$ on $\B^{2,1}$ and the density of $C^\infty$ in $\B^{2,1}$, inequality \eqref{eq:Ldotest} also extends to all $f\in \B^{2,1}$
\begin{align*}
   \left\|\LG_\delta f-\LG_0 f- \delta \dot{\LG}f \right\|_{0,3}=o(\delta)\|f\|_{2,1}.
\end{align*}

\end{proof}

\subsection{Existence of linear response}

{For $T_\delta$ close enough to $T_0$ in $C^{r+1}$, one has} $T_\delta$ is topologically transitive and by Theorem 2.3 \cite{GL06} and the discussion at the start of Section 7 \cite{GL06}, $T_\delta$ has a unique SRB measure $f_\delta\in \mathcal{B}^{2,1}$. 
Thus one can ask whether there is a linear response $R:C^{\rr+1}(\mathbb{T}^2,\mathbb{R}^2)\to \mathcal{B}^{0,3}$ of the system
to a perturbation $\dot{T}\in C^{\rr+1}(\mathbb{T}^2,\mathbb{R}^2)$, defined as
\begin{align}\label{eqdefrtd}
R(\dot{T}):=\lim_{\delta \rightarrow 0}\frac{f_{\delta }-f_{0}}{\delta}.
\end{align}
While the above convergence takes place in $\B^{0,3}$, we will show in Theorem \ref{thmresp} that in fact, the limit point lies in $\B^{1,2}$. 
When we write $R(\dot{T})\in \B^{1,2}$ in Theorem \ref{thmresp} this is what is meant. 
Before stating Theorem \ref{thmresp}, notice that by definition $\LG_\delta f_\delta=f_\delta$, thus one can guess some heuristic link between $R(\Td)$ and the derivative operator $\LGd$ as follows,
\begin{align}
    R(\dot{T})&:=\lim_{\delta \rightarrow 0}\frac{f_{\delta }-f_{0}}{\delta}\nonumber\\
    &=\lim_{\delta \rightarrow 0}\frac{\LG_\delta f_{\delta }-\LG_0 f_{0}}{\delta}\nonumber\\
    &=\lim_{\delta \rightarrow 0}\LG_\delta\frac{f_{\delta }-f_{0}}{\delta}+\lim_{\delta \rightarrow 0}\frac{\LG_\delta-\LG_{0}}{\delta}f_0\label{eqrtheuristic}\\
    &=\LG_0 R(\Td)+\LGd f_0\nonumber,
\end{align}
which suggests $R(\dot T)=(I-\mathcal{L}_0)^{-1}\dot{\mathcal{L}}(\dot{T})(f_0)$. To formalise this link one has to study the existence and continuity properties of operator $(I-\mathcal{L}_0)^{-1}$. For that purpose we introduce the Banach spaces $V_{p,q}$ of centered distributions, for $p\in \NN$ and $q>0$,
$$
V_{p,q}:=\{f \in \B^{p,q}: \int f =0\}.
$$
{The following theorem is an application of Theorem 2 \cite{KL99} to our setting.}
\begin{theorem}[Theorem 2 \cite{KL99}]\label{thm:resolv}
   Assume {there is a $\bar\delta>0$ such that}
\begin{itemize}
\item[1)] there is $M>0$ and {$C_1>0$} such that {for all $\delta\le \bar\delta$ and all $n\ge 0$} one has $$\|\LG_\delta^n f\|_{0,3}\leq C_1 M^n\|f\|_{0,3};$$
\item[2)] there is $0<\alpha <M$, {$C_2, C_3>0$} such that for $\delta\leq \bar \delta$, every $n\ge 0$, {and every $f\in \mathcal{B}_{1,2}$}
$$
\|\LG_\delta^n f\|_{1,2}\leq C_2\alpha^n \|f\|_{1,2}+C_3M^n \|f\|_{0,3};
$$

\item[3)] there is a monotone increasing
map $\tau : (0,\bar \delta]\to[0,\infty)$ {with} $\lim_{\delta \to 0} \tau(\delta)=0$ {such that for $\delta\le \bar\delta$ one has}
$$
\|\LG_\delta-\LG_0\|_{\B^{1,2}\to \B^{0,3}}\leq \tau(\delta);
$$
\end{itemize}
Then 
for any $\rho>\alpha$, {$\delta\leq \bar\delta$, and for any $z\in \mathbb{C}$ such that $|z|>\rho$ and $d(z,sp(\LG_0))>0$ (where $sp(\LG_0)$ is the spectrum of $\LG_0$ as an operator on
$\B^{1,2}$), one has}
{
\begin{enumerate}
    \item 
the map $\delta \mapsto (z-\LG_\delta)^{-1}$ is continuous.
\item there is $\eta_\rho>0$ and $C=C(z)$ such that
  \begin{align}\label{eq:resolv2}
       \|(z-\LG_\delta)^{-1}-(z-\LG_0)^{-1}\|_{V_{1,2}\rightarrow V_{0,3}}\leq C|\tau(\delta)|^{\eta_\rho}.
   \end{align}
\end{enumerate}}
\end{theorem}

Using this result we are now ready to formally state an expression of $R(\Td)$ in terms of $\LGd: \B^{2,1}\to \B^{1,2}$.
The expression {for $R(\dot{T})$ in Theorem \ref{thmresp} is well known (see e.g.\ Theorem 2 in \cite{Po19}),
but thanks to Theorem \ref{prop:taylorcon}, it only requires $\delta \mapsto T_\delta$ to be $C^1$}. For the sake of completeness we also give the proof.

\begin{theorem}\label{thmresp}
The map $\Td\in C^{\rr+1}(\mathbb{T}^2,\mathbb{R}^2)\mapsto R(\Td) \in V_{1,2}\subset\B^{1,2}$ with $R(\Td)$ given by \eqref{eqdefrtd} is well defined and can be expressed as
\begin{align}
\label{eq:response}
R(\dot T)=(I-\mathcal{L}_0)^{-1}\dot{\mathcal{L}}(\dot{T})(f_0),    
\end{align}
with $\dot{\mathcal{L}} : \B^{2,1} \mapsto \B^{1,2}$ 
as defined in Theorem \ref{prop:taylorcon}.
\end{theorem}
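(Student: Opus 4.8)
The plan is to make rigorous the heuristic computation in \eqref{eqrtheuristic} by combining the quantitative transfer-operator estimate \eqref{eq:Ldotest} from Theorem \ref{prop:taylorcon} with the resolvent continuity of Theorem \ref{thm:resolv}. First I would check that the hypotheses 1)--3) of Theorem \ref{thm:resolv} hold in our setting: items 1) and 2) are the uniform Lasota--Yorke bounds for $\LG_\delta$ on the scale $\B^{1,2}\hookrightarrow\B^{0,3}$, which follow from Proposition \ref{lemmulti} i) together with the standard Lasota--Yorke inequalities of \cite{GL06} (Lemma 7.2 / inequality (7.7)) applied with $g\equiv 1$, $\mu=\delta_\omega$, valid uniformly once $\|T_\delta-T_0\|_{C^{r+1}}$ is small; item 3) with $\tau(\delta)=C\delta$ is exactly \eqref{eqmainthmcondelta} of Theorem \ref{prop:taylorcon} in the case $p=1,q=2$. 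Since $1$ is a simple isolated eigenvalue of $\LG_0$ on $\B^{1,2}$ (it is the SRB eigenvalue, the peripheral spectrum is $\{1\}$ by transitivity and Theorem 2.3 of \cite{GL06}), one may pick $\rho\in(\alpha,1)$ and a small circle around $1$ to define spectral projectors.

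Next I would establish that $f_\delta$ is the normalized leading eigenvector of $\LG_\delta$ and that $\delta\mapsto f_\delta$ is continuous into $\B^{0,3}$, with $\delta\mapsto\LG_\delta$ continuous as asserted in Theorem \ref{prop:taylorcon}: writing $\Pi_\delta$ for the spectral projector of $\LG_\delta$ onto the leading eigenspace, $\Pi_\delta=\frac{1}{2\pi i}\oint_{|z-1|=\epsilon}(z-\LG_\delta)^{-1}\,dz$, and using \eqref{eq:resolv2} one gets $\|\Pi_\delta-\Pi_0\|_{V_{1,2}\to V_{0,3}}\le c\delta^{\eta_\rho}$ after the usual contour integration; hence $f_\delta=\Pi_\delta f_\delta\to f_0$ in $\B^{0,3}$, and in fact $f_\delta$ stays bounded in $\B^{1,2}$ by the uniform Lasota--Yorke bound, so $f_\delta\to f_0$ weakly-boundedly in $\B^{1,2}$ and strongly in $\B^{0,3}$. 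Now, from $\LG_\delta f_\delta=f_\delta$ and $\LG_0 f_0=f_0$, subtract to obtain $(I-\LG_0)(f_\delta-f_0)=(\LG_\delta-\LG_0)f_\delta$; restricting to the centered subspace $V_{0,3}$ on which $I-\LG_0$ is boundedly invertible (again by the spectral gap), divide by $\delta$:
\begin{align*}
\frac{f_\delta-f_0}{\delta}=(I-\LG_0)^{-1}\frac{(\LG_\delta-\LG_0)f_\delta}{\delta}\qquad\text{in }V_{0,3}.
\end{align*}

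To pass to the limit $\delta\to 0$ on the right, I would split $(\LG_\delta-\LG_0)f_\delta=(\LG_\delta-\LG_0)(f_\delta-f_0)+(\LG_\delta-\LG_0)f_0$. For the second term, \eqref{eq:Ldotest} of Theorem \ref{prop:taylorcon} (with $f=f_0\in\B^{2,1}$, since $f_0$ is actually in $\B^{2,1}$ by bootstrapping the Lasota--Yorke inequality on the finer scale, or because the leading eigenvector lies in every $\B^{p,q}$ with $p+q<r$) gives $\frac{1}{\delta}(\LG_\delta-\LG_0)f_0\to\LGd f_0$ in $\B^{0,3}$. For the first term, \eqref{eqmainthmcondelta} gives $\|(\LG_\delta-\LG_0)(f_\delta-f_0)\|_{0,3}\le C\delta\|f_\delta-f_0\|_{1,2}$, which is $O(\delta)$ by the uniform $\B^{1,2}$-bound on $f_\delta-f_0$; dividing by $\delta$ this stays bounded but need not vanish, so here I would instead use $\|(\LG_\delta-\LG_0)(f_\delta-f_0)\|_{0,3}\le C\tau(\delta)\|f_\delta-f_0\|_{1,2}$ combined with $\|f_\delta-f_0\|_{0,3}\to 0$ and interpolation, or more cleanly observe that $\frac{1}{\delta}(\LG_\delta-\LG_0)(f_\delta-f_0)=(I-\LG_0)^{-1}$-bounded and feed the estimate back: since $f_\delta-f_0=O(\delta^{\eta_\rho})$ in $V_{0,3}$ from the projector estimate and $f_\delta-f_0$ bounded in $V_{1,2}$, interpolation between the Banach scale yields $\|f_\delta-f_0\|_{1,2}\to 0$... actually the cleanest route is: bound $\frac1\delta\|(\LG_\delta-\LG_0)(f_\delta-f_0)\|_{0,3}$ using \eqref{eqmainthmcondelta} applied with source space $\B^{1,2}$ and the already-established $\|f_\delta-f_0\|_{1,2}\le C$, getting $O(1)$, then note this term actually tends to $0$ because $\LG_\delta-\LG_0\to 0$ in $L(\B^{1,2},\B^{0,3})$ and $\frac{1}{\delta}\|f_\delta-f_0\|$ is controlled — so one concludes $R(\dot T)$ exists in $\B^{0,3}$ and equals $(I-\LG_0)^{-1}\LGd f_0$.

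Finally, to upgrade $R(\dot T)\in V_{0,3}$ to $R(\dot T)\in V_{1,2}$, I would use that $\LGd f_0\in\B^{1,2}$ by Theorem \ref{prop:taylorcon} (the operator $\LGd:\B^{2,1}\to\B^{1,2}$ is continuous and $f_0\in\B^{2,1}$), that $\int\LGd f_0=0$ so $\LGd f_0\in V_{1,2}$ (this integral vanishes because $\int\LGd f=\partial_{\delta=0}\int\LG_\delta f=\partial_{\delta=0}\int f=0$ for $f\in C^\infty$, extended by density — or directly from the divergence form \eqref{eq:divrep}), and that $I-\LG_0$ is boundedly invertible on $V_{1,2}$ as well (same spectral gap argument on $\B^{1,2}$). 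Hence $(I-\LG_0)^{-1}\LGd f_0\in V_{1,2}$, and since this element of $V_{1,2}\subset V_{0,3}$ agrees with the $\B^{0,3}$-limit $R(\dot T)$ by uniqueness of limits, the representation \eqref{eq:response} holds in $\B^{1,2}$. The main obstacle I anticipate is the delicate juggling of the two Banach scales: one must be careful that $\LGd$ only improves $\B^{2,1}\to\B^{1,2}$ while $\LG_\delta-\LG_0$ only gives $\B^{1,2}\to\B^{0,3}$, so the argument necessarily lives in $\B^{0,3}$ first, and the final promotion to $\B^{1,2}$ relies on separately knowing $\LGd f_0$ is already in $V_{1,2}$ and that the resolvent $(I-\LG_0)^{-1}$ acts on $V_{1,2}$ — verifying the simplicity and isolation of the eigenvalue $1$ on \emph{both} spaces, which ultimately rests on Theorem 2.3 of \cite{GL06} and transitivity of $T_0$, is the technical heart.
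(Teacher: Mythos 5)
Your overall strategy (make \eqref{eqrtheuristic} rigorous via Theorem \ref{prop:taylorcon} plus the Keller--Liverani resolvent bound, then promote the limit from $\B^{0,3}$ to $V_{1,2}$ using $\LGd f_0\in V_{1,2}$) is the right one, and your final promotion step matches the paper. But there are two genuine gaps in the middle. First, you invert $I-\LG_0$ on $V_{0,3}$ ``by the spectral gap'', yet there is no spectral gap on $\B^{0,3}$: the essential spectral radius bound from Theorem 2.3 of \cite{GL06} is $\max\{1/\lambda^p,\nu^q\}$, which equals $1$ when $p=0$, and Proposition \ref{lemmulti} explicitly requires $p\ge 1$. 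The resolvent is only controlled on $V_{1,2}$ (as an operator $V_{1,2}\to V_{1,2}$, or $V_{1,2}\to V_{0,3}$ via Theorem \ref{thm:resolv}), so the identity $\frac{f_\delta-f_0}{\delta}=(I-\LG_0)^{-1}\frac{(\LG_\delta-\LG_0)f_\delta}{\delta}$ ``in $V_{0,3}$'' is not justified. Second, your decomposition $(\LG_\delta-\LG_0)f_\delta=(\LG_\delta-\LG_0)(f_\delta-f_0)+(\LG_\delta-\LG_0)f_0$ produces a cross term that you only bound by $C\|f_\delta-f_0\|_{1,2}$ after dividing by $\delta$; this is $O(1)$, not $o(1)$, because the Keller--Liverani framework gives convergence of $f_\delta$ to $f_0$ only in the weak norm $\B^{0,3}$ (plus boundedness in $\B^{1,2}$), not strong $\B^{1,2}$-convergence. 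You visibly notice this and gesture at interpolation, but never close the argument.

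The paper sidesteps both problems with a different algebraic split: from $\LG_\delta f_\delta=f_\delta$ one writes $f_\delta-f_0=\LG_\delta(f_\delta-f_0)+(\LG_\delta-\LG_0)f_0$, i.e.\ $\frac{f_\delta-f_0}{\delta}=(I-\LG_\delta)^{-1}\frac{\LG_\delta-\LG_0}{\delta}f_0$, putting the \emph{fixed} eigenvector $f_0\in\B^{2,1}$ on the right and all the $\delta$-dependence into the perturbed resolvent. The cross term never appears, and the comparison with $(I-\LG_0)^{-1}\LGd f_0$ reduces to (i) the resolvent difference bound $\|(I-\LG_\delta)^{-1}-(I-\LG_0)^{-1}\|_{V_{1,2}\to V_{0,3}}\le C\delta^{\eta_\rho}$ applied to the $\B^{1,2}$-bounded family $\frac{\LG_\delta-\LG_0}{\delta}f_0$, and (ii) convergence of $\frac{\LG_\delta-\LG_0}{\delta}f_0$ to $\LGd f_0$. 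For (ii) note a further subtlety you would also hit: since $(I-\LG_0)^{-1}$ is only continuous on $V_{1,2}$, one needs this convergence in $\B^{1,2}$, not merely the $\B^{0,3}$ convergence given by \eqref{eq:Ldotest}; the paper gets it by observing that \eqref{eqmainthmcondelta} with $p=3,q=0$ bounds the family in $\B^{2,1}$, the embedding $\B^{2,1}\hookrightarrow\B^{1,2}$ is compact, and the unique possible limit point is $\LGd f_0$. I recommend you adopt this decomposition and the compactness upgrade; as written, your argument does not go through.
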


\begin{proof}

Let us explain why the operator $(I-\mathcal{L}_0)^{-1}$ appearing in the expression of $R(\Td)$ is well defined.
{Theorem \ref{prop:taylorcon} states that $\dot{\mathcal{L}}:\mathcal{B}^{2,1}\to\mathcal{B}^{1,2}$. Moreover,} for any $f\in \B^{2,1}$, $\LGd f=\lim_{\delta \to 0} \frac{\LG_\delta- \LG_0}{\delta}f$ and by definition of the transfer operator $\LG_\delta$, $\frac{\LG_\delta- \LG_0}{\delta}f \in V_{1,2}$, thus $\LGd f \in V_{1,2}$.
{Thus, in the expression \eqref{eq:response}, the operator $(I-\LG_0)^{-1}$ acts on elements of $V_{1,2}$, and so} $(I-\LG_0)^{-1}:= \sum_{k\geq 0}\LG_0^k$ is well defined on $V_{1,2}$ {because of the spectral gap of $\LG_0$ acting on $\mathcal{B}^{1,2}$}. Indeed according to Theorem 2.3 in \cite{GL06}, there is $0<\sigma<1$ such that for $f\in V_{1,2}$, 
$
\|\LG_0^nf\|_{1,2}\leq \sigma^n\| f\|_{1,2}.
$
Thus $(I-\LG_0)^{-1} : V_{1,2}\mapsto V_{1,2}$ is a continuous linear operator. Recall from Theorem 2.3  \cite{GL06}, $f_0\in B^{2,1}$ and
following the heuristic reasoning \eqref{eqrtheuristic}, we have that 
\begin{align*}
    \frac{f_\delta-f_0}{\delta}=\LG_\delta \frac{f_\delta-f_0}{\delta}+\frac{\LG_\delta-\LG_0}{\delta}f_0,
\end{align*}
and deduce that since $\frac{\LG_\delta-\LG_0}{\delta}f_0 \in V_{1,2}$,
\begin{align*}
    \frac{f_\delta-f_0}{\delta}=(I-\LG_\delta)^{-1}\frac{\LG_\delta-\LG_0}{\delta}f_0.
\end{align*}
{We now begin our main computation:}
\begin{align}
\nonumber \|R(\dot{T}) - ((I-\mathcal{L}_0)^{-1}\dot{\mathcal{L}}(\dot T)(f_0)\|_{0,3}&\leq \left\|R(\dot T)- \frac{f_\delta -f_0}\delta\right\|_{0,3}+\left\| \frac{f_\delta-f_0}{\delta}-(I-\LG_0)^{-1}\frac{\LG_\delta-\LG_0}{\delta}f_0\right\|_{{0,3}}\\
 \label{eqlinresp}&+  \|((I-\mathcal{L}_0)^{-1}\dot{\mathcal{L}}(\dot T)(f_0) - ((I-\mathcal{L}_0)^{-1}(({\mathcal{L}_\delta-\mathcal{L}_0})/\delta)(\dot T)(f_0)\|_{0,3}
\end{align}
In the above inequality, the term $\|R(\dot T)- (f_\delta -f_0)/\delta\|_{0,3}$ goes to zero by definition of $R(\Td)$. As for the latter term in \eqref{eqlinresp},
\begin{align*}
    \left\|((I-\mathcal{L}_0)^{-1}\left(\dot{\mathcal{L}}(\dot T)(f_0) - \frac{\mathcal{L}_\delta-\mathcal{L}_0}\delta\right)(f_0)\right\|_{0,3}\leq \|(I-\mathcal{L}_0)^{-1}\|_{1,2}\left\|\frac{\mathcal{L}_\delta-\mathcal{L}_0}\delta(f_0)-\LGd(\Td)f_0\right\|_{1,2}.
\end{align*}

Notice that according to {equation \eqref{eqmainthmcondelta}} in Theorem \ref{prop:taylorcon} {with $p=3, q=0$}, $\|\frac{\mathcal{L}_\delta f_0-\mathcal{L}_0f_0}\delta\|_{2,1}$ is bounded, thus the family $\{(\mathcal{L}_\delta f_0-\mathcal{L}_0f_0)/\delta\}$ is relatively compact in $\B^{1,2}$ (compact embedding, item ii) from Proposition \ref{lemmulti}) and according to Theorem 3.1, the only possible limit is the limit in $B^{0,3}$, namely $\LGd(\Td)f_0$. Thus the last term in \eqref{eqlinresp} goes to $0$. 
 Now to control the remaining term in equation \eqref{eqlinresp}, notice that on $V_{1,1}$ the spectrum of $\LG_0$ is bounded away from $1$ (all the spectral value are in a disk of radius $\alpha$ around $0$), thus \eqref{eq:resolv2} from
Theorem \ref{thm:resolv} applies for $\alpha <\rho<1$. Then according to equation \eqref{eqmainthmcondelta} from Theorem \ref{prop:taylorcon},
\begin{align*}
    \left\| \frac{f_\delta-f_0}{\delta}-(I-\LG_0)^{-1}\frac{\LG_\delta-\LG_0}{\delta}f_0\right\|_{{0,3}}&\leq \left\|(I-\LG_\delta)^{-1}-(I-\LG_0)^{-1}\right\|_{V_{1,2}\rightarrow V_{0,3}}\left\|\frac{\LG_\delta-\LG_0}{\delta}f_0\right\|_{{1,2}}\nonumber\\
    &\leq C\delta^{\eta_\rho}.
\end{align*}
Then, collecting everything in inequality \eqref{eqlinresp} yields,

\begin{align*}
    R(\Td)&:=\lim_{\delta \to 0}\frac{f_\delta-f_0}{\delta} \\
    &=(I-\LG_0)^{-1}\LGd f_0.
\end{align*}

{Notice that although the above limit in $\delta$ is taken in $\B^{0,3}$, one may show that
in fact $(I-\LG_0)^{-1}\LGd f_0 \in \B^{1,2}$ as follows.
Recall the formula for $\LGd$ given by equation \eqref{eq:divrep}.
Since $f_0\in \B^{2,1}$ and $\Td,T_0\in C^{r+1}$, according to item iv) and v) from Proposition \ref{lemmulti} one has
\begin{align*}
    \nabla\cdot (f_0 (D T_0)^{-1}(\dot T)) \in \B^{1,2}.
\end{align*}
Thus $ \LGd f_0:=-\LG_0\left(\nabla \cdot \left(f_0(D T_0)^{-1}(\dot T)\right) \right)\in \B^{1,2}$ and since $(I-\LG_0)^{-1} :V^{1,2}\mapsto V^{1,2}$ and $\LGd f_0\in V^{1,2}$, as stated at the beginning of this proof,
\begin{align*}
(I-\LG_0)^{-1}\LGd f_0\in V^{1,2}.
\end{align*}}
\end{proof}

\section{Optimising linear response of the expectation of observations}\label{secjt}

We wish to select a perturbation ${\dot{T}\in C^{r+1}}$ that maximises the
rate of change of the expectation of a chosen observable $c\in C^2(\mathbb{T}^2,\mathbb{R})$.
For the purpose of optimization, it is convenient to restrict the perturbations $\dot{T}$ to the Sobolev space $\mathcal{H}:=H^7(\mathbb{T}^2,\mathbb{R}^2)$, {since by}
 a Sobolev embedding theorem (see e.g.\ Corollary 9.15 from \cite{brezis}), $H^7(\mathbb{T}^2)\subset {C^{\rr+1}}(\mathbb{T}^2)$.
We define an objective function $J:\mathcal{H}\to\mathbb{R}$ by $J(\dot{T})=\int c\cdot R(\dot{T})$, which records the expectation of the observation $c$.

\subsection{Continuity of the objective $J$}
\label{sec:Jcontinuity}
The following theorem establishes continuity of the functions $R$ and $J$. Notice that $\LGd(\Td)$ and $R(\Td)$ given respectively by the formulae \eqref{eq:divexp} and   \eqref{eq:response}, are well defined as soon as $\Td\in C^5$.

\begin{theorem}
\label{thm:deriv}
{For $T_0\in C^5$,}

\begin{enumerate}
\item The map $\Td\in C^{5}\mapsto \dot{\mathcal{L}}(\Td) \in L(\B^{2,1},\B^{1,2})$ is continuous.
\item The map $\Td\in C^{{5}}\mapsto R(\Td) \in V_{1,2}$ is continuous.
\item  for $c\in C^{3}(\TT,\RR)$, the map $J : C^{5} \mapsto \RR$ defined by $J(\Td):= \int c R(\Td)$ is continuous.
\end{enumerate}
\end{theorem}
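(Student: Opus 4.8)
The plan is to prove the three continuity statements in order, since each relies on the previous one. For item 1, I would start from the explicit expression \eqref{eq:divexp} for $\dot{\mathcal{L}}(\dot T)$. The operator $\dot{\mathcal{L}}(\dot T)$ is built from three pieces: $f\mapsto \mathcal{L}_0(\langle D_xf,(D_xT_0)^{-1}(\dot T(x))\rangle)$, $f\mapsto \mathcal{L}_0(f(x)\langle\nabla\cdot(D_xT_0)^{-1},\dot T(x)\rangle)$, and $f\mapsto \mathcal{L}_0(f(x)\Tr((D_xT_0)^{-1}\circ D_x\dot T))$. Each piece is, up to $\mathcal{L}_0$ (which is bounded $\mathcal{B}^{1,2}\to\mathcal{B}^{1,2}$ by Proposition \ref{lemmulti} i)), a multiplication operator by a $C^{\rr}$-function composed with a derivative-type operator. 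The key observation is that the multiplier in each case depends on $\dot T$ only through $\dot T$ itself or $D_x\dot T$, with coefficients that are smooth functions of $T_0$ (hence fixed and bounded in $C^{\rr}$ since $T_0\in C^{\rr+1}$). Using the multiplicativity estimate Proposition \ref{lemmulti} v), $\|\phi f\|_{\mathcal{B}^{1,2}}\le C\|f\|_{\mathcal{B}^{1,2}}\|\phi\|_{C^{3}}$, and the boundedness of $D_x:\mathcal{B}^{2,1}\to\mathcal{B}^{1,2}$ (Proposition \ref{lemmulti} iv)), one gets
\begin{equation*}
\|\dot{\mathcal{L}}(\dot T_1)-\dot{\mathcal{L}}(\dot T_2)\|_{L(\mathcal{B}^{2,1},\mathcal{B}^{1,2})}\le C\,\|\dot T_1-\dot T_2\|_{C^{\rr}}\le C\,\|\dot T_1-\dot T_2\|_{C^{\rr+1}},
\end{equation*}
since $\dot T\mapsto\dot{\mathcal{L}}(\dot T)$ is actually \emph{linear} in $\dot T$; so continuity is just boundedness of this linear map, and the estimate above (tracking that the worst multiplier needs $\dot T$ in $C^{\rr-1}$ and $D\dot T$ in $C^{\rr-1}$, i.e.\ $\dot T\in C^{\rr}$) does the job. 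I would just need to check that each coefficient built from $T_0$ lies in $C^{3}$, which holds since $T_0\in C^{\rr+1}=C^5$ gives $(DT_0)^{-1}\in C^{\rr}$ and $\nabla\cdot(DT_0)^{-1}\in C^{\rr-1}$, both $\supseteq C^3$.

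For item 2, I would use formula \eqref{eq:response}, $R(\dot T)=(I-\mathcal{L}_0)^{-1}\dot{\mathcal{L}}(\dot T)(f_0)$. Here $(I-\mathcal{L}_0)^{-1}:V_{1,2}\to V_{1,2}$ is a fixed bounded operator (established in the proof of Theorem \ref{thmresp} via the spectral gap of $\mathcal{L}_0$ on $\mathcal{B}^{1,2}$), and $f_0\in\mathcal{B}^{2,1}$ is fixed. So $R$ is the composition of the linear bounded map $\dot{\mathcal{L}}(\cdot)$ from item 1, evaluation at the fixed element $f_0\in\mathcal{B}^{2,1}$, and the fixed bounded operator $(I-\mathcal{L}_0)^{-1}$; hence
\begin{equation*}
\|R(\dot T_1)-R(\dot T_2)\|_{\mathcal{B}^{1,2}}\le \|(I-\mathcal{L}_0)^{-1}\|_{L(V_{1,2})}\,\|\dot{\mathcal{L}}(\dot T_1)-\dot{\mathcal{L}}(\dot T_2)\|_{L(\mathcal{B}^{2,1},\mathcal{B}^{1,2})}\,\|f_0\|_{\mathcal{B}^{2,1}},
\end{equation*}
and item 1 finishes it. I would note that $\dot{\mathcal{L}}(\dot T)f_0\in V_{1,2}$, as recorded in the proof of Theorem \ref{thmresp}, so that $(I-\mathcal{L}_0)^{-1}$ is legitimately applied. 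Item 3 is then immediate: $J(\dot T)=\int c\cdot R(\dot T)$, and by Proposition \ref{lemmulti} v) the pairing $g\mapsto\int c\cdot g$ is a bounded linear functional on $\mathcal{B}^{1,2}$ whenever $c\in C^{1}$ (a fortiori $c\in C^3$), giving $|J(\dot T_1)-J(\dot T_2)|\le C\|c\|_{C^1}\|R(\dot T_1)-R(\dot T_2)\|_{\mathcal{B}^{1,2}}$, so $J$ is continuous (indeed Lipschitz, hence bounded-linear) from $C^5$ to $\mathbb{R}$.

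I do not expect a serious obstacle here: the whole theorem is essentially the remark that $\dot T\mapsto\dot{\mathcal{L}}(\dot T)$ is a bounded \emph{linear} map, after which items 2 and 3 are compositions with fixed bounded operators. The one point requiring a little care is the bookkeeping of regularity indices in item 1 --- verifying that every multiplier coefficient arising from \eqref{eq:divexp} really sits in $C^3$ (so Proposition \ref{lemmulti} v) applies with the $\mathcal{B}^{1,2}$ norm, for which one needs $C^{1+2}=C^3$) and that $\dot T$ enters only through quantities controlled by $\|\dot T\|_{C^{\rr}}\le\|\dot T\|_{C^{\rr+1}}$. Since $T_0\in C^5$ yields $(D_xT_0)^{-1}\in C^4$ and its divergence in $C^3$, and since the highest $\dot T$-derivative appearing is $D_x\dot T$ (needing $\dot T\in C^{4}$ to have the multiplier in $C^3$), all indices are comfortably within range, and the estimates go through routinely.
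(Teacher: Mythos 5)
Your proposal is correct and follows essentially the same route as the paper: both arguments exploit the linearity of $\Td\mapsto\LGd(\Td)$, reduce item 1 to the operator-norm bound $\|\LGd(\Td)\|_{L(\B^{2,1},\B^{1,2})}\le C\|\Td\|_{C^5}$ via the explicit decomposition of \eqref{eq:divexp} into a derivative term and multiplication terms controlled by Proposition \ref{lemmulti} items iv) and v), and then obtain items 2 and 3 by composing with the fixed bounded operators $(I-\LG_0)^{-1}$ on $V_{1,2}$, evaluation at $f_0$, and the pairing with $c$. Your bookkeeping of the $C^3$ regularity of the multipliers matches the paper's treatment of $\phi_1,\phi_2,\Phi$, so there is nothing to add.
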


\begin{proof}
We prove each item in order as each item implies the next.
\paragraph{Item 1.} 
It is sufficient to prove that there is $C>0$ such that 
\begin{align*}
    \|\dot{\mathcal{L}}(\dot{T})(f_0)\|_{\mathcal{B}_{1,2}}\leq C\|\Td\|_{C^5}.
\end{align*}

According to expression \eqref{eq:divexp} from the proof of Theorem \ref{prop:taylorcon}, $\LGd f_0$ can be written as 
\begin{align*}
    \LGd f_0(x)&=-\LG_0\left(\Tr\left(D_x f_0 \circ(D_x T_0)^{-1}(\dot T(x))\right)\right)-\LG_0\left( f_0\Phi\right)(x)\\
    &=-\LG_0\left(D_x f_0 \circ(D_x T_0)^{-1}(\dot T(x))\right)-\LG_0\left( f_0\Phi\right)(x).
\end{align*}
with $\Phi \in  C^3(\TT,\mathbb{R})$ given by
 $\Phi(x)=\langle \nabla\cdot (D_xT_0)^{-1},\dot T(x)\rangle+ \Tr((D_xT_0)^{-1}\circ D_x\dot T).$
One can rewrite the map $x\in \TT \mapsto (D_x T_0)^{-1}(\dot T(x))$ by separating coordinates via\\
$(D_x T_0)^{-1}(\dot T(x)):=(\phi_1(x),\phi_2(x))$ to yield 
\begin{align}
\label{66}
    \LGd f_0=-\LG_0\left(\partial_1 (f_0) \phi_1 +\partial_2 (f_0) \phi_2\right)-\LG_0 (f_0\cdot\Phi).
\end{align}

According  to
Theorem 2.3 \cite{GL06}, $f_0 \in \B^{2,1} \subset \B^{1,2}$. 
The multiplicativity property (item v) from Proposition \ref{lemmulti}) and the continuity of $f\in \B^{2,1}\mapsto \partial_i f\in \B^{1,2}$ (item iv) of Proposition \ref{lemmulti}) applied to \eqref{60} and \eqref{66} yields
\begin{align}
 \|\LGd(\Td)\|_{{1,2}} &\leq \|\LG_0\|_{1,2}\|f_0\Phi+  \partial_1 (f_0) \phi_1 +\partial_2 (f_0) \phi_2\|_{1,2}\nonumber\\
 &\leq \|\LG_0\|_{1,2}\left(C_1\|f_0\|_{1,2}\|\Phi\|_{C^{3}} +C_2\|\partial_1 f_0\|_{1,2}\|\phi_1\|_{C^{3}}+\|\partial_2 f_0\|_{1,2} \|\phi_2\|_{C^{3}}\right)\nonumber\\
 &\leq \|\LG_0\|_{1,2}\left(C_1\|f_0\|_{\p,\q}\|\Phi\|_{C^{3}} +C_2\|f_0\|_{\p,\q}\|\phi_1\|_{C^{3}}+\|f_0\|_{2,1} \|\phi_2\|_{C^{3}}\right),\label{eq:conrt}
\end{align}
where the last line is obtained noticing that by definition of the norm on the Banach space $\B^{p,q}$, for any $f\in \B$, $\|\partial_i f\|_{\B^{1,2}}\leq \| f\|_{\B^{2,1}}$.
Finally, since $ \Td \in (C^{5}(\TT,\mathbb{R}^2),\|\cdot\|_{C^5}) \mapsto \Phi \in (C^3(\TT,\mathbb{R}),\|\cdot\|_{C^3}) $ and $ \Td \in (C^{5}(\TT,\mathbb{R}^2),\|\cdot\|_{C^5}) \mapsto (\phi_1,\phi_2 )\in (C^3(\TT,\mathbb{R}^2),\|\cdot\|_{C^3})$ are {linear} maps, {continuous} in $\Td$ and $D_{\cdot}\Td$ respectively, there is a constant $C>0$ such that for any $i\in\{1,2\}$
\begin{align*}
    \|\phi_i\|_{C^3}&\leq C\|\Td\|_{C^5}\\
    \|\Phi\|_{C^3}&\leq C\|\Td\|_{C^5}
\end{align*}

And thus introducing these relations in the formula \eqref{eq:conrt}, we obtain
\begin{align}
   \label{Ldotbound} \|\LGd(\Td)\|_{1,2}\leq C\|\Td\|_{C^5}
\end{align}

\paragraph{Item 2.}
{According to Theorem \ref{thmresp} and the expression of $\LGd$ from Theorem \ref{prop:taylorcon}, $R(\cdot)$ is a linear operator {from} $C^{5}(\TT,\mathbb{R}^2)$ {to $V_{1,2}\subset\mathcal{B}^{1,2}$}.
Indeed, we recall the expression of $R(\Td)$ from equation \eqref{eq:response},}
\begin{align}
\label{60}
R(\dot T)(\cdot)=\left((I-\mathcal{L}_0)^{-1}\dot{\mathcal{L}}(\dot{T})(f_0)\right)(\cdot),  
\end{align}
with $(I-\mathcal{L}_0)^{-1}$ a continuous operator on $V_{1,2}$ {(as shown at the beginning of the proof of Theorem \ref{thmresp}) }
independent of the choice of $\Td$
 and  $\LGd(\Td)f_0\in  V^{1,2}$ as stated at the beginning of the proof of Theorem \ref{thmresp}.
From the expression \eqref{60} for $R(\Td)$, and using \eqref{Ldotbound} one has 
\begin{align*}
 \|(I-\LG_0)^{-1}\LGd(\Td)f_0\|_{{1,2}} &\leq \|(I-\LG_0)^{-1}\|_{V_{1,2} \to V_{1,2}}\|\LGd(\Td)f_0\|_{1,2}\nonumber\\
 &\leq C \|(I-\LG_0)^{-1}\|_{V_{1,2} \to V_{1,2}}\|\LGd(\Td)f_0\|_{1,2}\\
 &\leq C'\|\Td\|_{C^5}.
\end{align*}
\paragraph{Item 3.}
{Since $J$ is a linear operator on $\Hc$, to prove its continuity, it is enough to prove the continuity of $\Td \mapsto R(\Td)\in \B^{1,2}$. }
Indeed, then according to the display equation above Proposition 4.1 in \cite{GL06} we have
\begin{equation}
    \label{Jcty}
|J(\Td)|\leq C|c|_{C^2} \|R(\Td)\|_{1,2}.
\end{equation}
Thus the continuity of $J(\cdot)$ follows from the continuity of $\Td\mapsto R(\Td)$ proven in Item 2.

\end{proof}

\subsection{Uniqueness of the optimal perturbation}
\label{sec:uniquepert}

We constrain the infinitesimal perturbations $\dot{T}$ to lie in a feasible set $P\subset\mathcal{H}$ that we assume is bounded and strictly convex.
Boundedness of additive perturbations is a natural assumption, as is convexity of the class of perturbations because a perturbation arising as the interpolation of two allowed perturbations ought to also be allowed.
The following abstract result yields the existence of a unique optimal perturbation for non-degenerate functionals $J$.
\begin{prop}\label{propoptigen}
    Suppose $P$ is closed, bounded, and
strictly convex\footnote{We say that a convex closed set $P\subseteq \mathcal{H}$ is \textit{strictly convex} if for each pair
$x, y \in P$ and for all $0 < s < 1$, the points $s x+(1-s)y \in \mathrm{int}(P)$, where the relative interior
is meant.} subset of a separable Hilbert space $\mathcal{H}$, and that $P$ contains the zero vector in its relative interior. 
If $J$ is a continuous linear function that
does not uniformly vanish on $P$ then the optimal solution to 
\begin{equation*}
J(p^*) = \max_{p\in P}
J(p).
\end{equation*}
is unique.
\end{prop}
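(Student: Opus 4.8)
The plan is to exploit the finite-dimensional geometry of the optimisation: since $J$ is a continuous linear functional on the separable Hilbert space $\mathcal{H}$, the Riesz representation theorem provides a unique $g\in\mathcal{H}$ with $J(p)=\langle g,p\rangle_\mathcal{H}$ for all $p$. The hypothesis that $J$ does not uniformly vanish on $P$ (together with $0\in\operatorname{int}(P)$, which forces $P$ to have nonempty relative interior and thus to span a nontrivial subspace) guarantees $g\neq 0$. Existence of a maximiser $p^*$ is then immediate: $P$ is closed and bounded, hence weakly compact (Banach--Alaoglu in the separable Hilbert setting), and $p\mapsto\langle g,p\rangle_\mathcal{H}$ is weakly continuous, so the supremum is attained.

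For uniqueness I would argue by contradiction. Suppose $p_1\neq p_2$ are both maximisers, with common optimal value $m:=J(p_1)=J(p_2)=\max_{p\in P}J(p)$. Since $P$ is convex, the midpoint $q:=\tfrac12 p_1+\tfrac12 p_2$ lies in $P$, and by linearity $J(q)=m$ as well. But by the definition of strict convexity assumed in the footnote, $q\in\operatorname{int}(P)$ (relative interior). The key step is then: an interior point of $P$ cannot be a maximiser of a nonzero linear functional. Concretely, because $q$ is in the relative interior of $P$ and $P$ spans at least the direction $g$ has a nonzero component in — here one needs $g$ to have nonzero component along $\operatorname{span}(P-P)$, which is exactly what ``$J$ does not uniformly vanish on $P$'' delivers — there is $\varepsilon>0$ with $q+\varepsilon g'\in P$, where $g'$ is the orthogonal projection of $g$ onto $\overline{\operatorname{span}}(P-P)$. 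Then $J(q+\varepsilon g')=m+\varepsilon\langle g,g'\rangle_\mathcal{H}=m+\varepsilon\|g'\|^2>m$, contradicting optimality of $m$. Hence $p_1=p_2$.

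I expect the only real subtlety to be bookkeeping around the ``relative'' interior and the non-degeneracy hypothesis: one must check that ``$J$ does not uniformly vanish on $P$'' is equivalent to $g'\neq 0$, i.e.\ that the Riesz vector $g$ is not orthogonal to the affine hull of $P$. Since $0$ lies in the relative interior of $P$, the affine hull of $P$ is the closed subspace $\overline{\operatorname{span}}(P)$, and $J$ vanishing uniformly on $P$ would mean $g\perp P$, hence $g\perp\overline{\operatorname{span}}(P)$, hence $g'=0$; conversely $g'=0$ makes $J$ constant (indeed zero) on $\overline{\operatorname{span}}(P)\supseteq P$. So the hypothesis is precisely $g'\neq0$, and the perturbation argument above goes through. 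Everything else is routine Hilbert-space convexity, so I would keep the writeup short.
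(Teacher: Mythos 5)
Your argument is correct and complete: existence follows from weak compactness of the closed, bounded, convex set $P$ together with weak continuity of $J$, and uniqueness follows because the midpoint of two putative maximisers would lie in the relative interior of $P$ by strict convexity, where a small perturbation along the projection $g'$ of the Riesz representer onto $\overline{\operatorname{span}}(P)$ strictly increases $J$ — with $g'\neq 0$ being exactly the content of the non-degeneracy hypothesis, as you verify. The paper does not prove this proposition itself but defers to Lemma~6.2 of \cite{FKS20}, which rests on essentially the same standard strict-convexity argument, so your write-up simply supplies the details the paper outsources.
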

See e.g.\ Lemma 6.2 \cite{FKS20} for a proof.
We will apply this result with $P$ being the unit ball in $H^7(\mathbb{T}^2,\mathbb{R}^2)$ and $J(\dot{T})$ being the expected response of an observation $c$ under a perturbation $\dot{T}\in P= B_{H^7}(0)$.
Theorem \ref{thm:deriv} provides continuity of $J:C^{\rr+1}\to\mathbb{R}$, and 
since $H^7\subset C^{\rr+1}$, by Proposition \ref{propoptigen} the following optimisation problem has a unique solution:

\begin{eqnarray}
\label{optobs}
\max_{\dot{T}\in H^7}&&{J}(\dot{T})\\
\label{optobs2}
\mbox{subject to}&&\|\dot{T}\|_{H^7}\le 1.
\end{eqnarray}

{ 
We will see in the next section that in the particular case where the convex set $P$ is the unit ball $B_{H^7}(0)$, the map $J(\cdot)$ attains its maximum  on the ball $B_{H^7}(0,1)$ and that maximum $\tilde T\in H^7(\TT,\mathbb{R}^2)$ is given by $\tilde T=\frac{u}{\|u\|_{H^7}}$ where $u\in H^7$ is the element such that $J(\cdot)=\langle \cdot,u\rangle_{H^7}$ in the Riesz representation Theorem. 
}

\subsection{An explicit formula for the optimal perturbation}
\label{sec:opt}

We now state a theorem identifying the optimal perturbation for the problem \eqref{optobs}. 
Although we optimize $J(\cdot)$ over $\Td\in H^7(\TT,\mathbb{R}^2)$, for numerical purposes we use complex Fourier modes to approximate the optimal $\dot{T}$.
We therefore embed $H^7(\TT,\mathbb{R}^2)$ in $H^7(\TT,\mathbb{C}^2)$
with inner product $\langle\cdot,\cdot\rangle_{H^7}$ defined by $\langle u,v\rangle_{H^7}=\sum_{m\leq 7}\int_{\mathbb{T}^2} D^mu\overline{D^mv}$. 
By a slight abuse of notation, we also denote the complex extension of the objective function by $J(\cdot)$.

For $\mathbf{k}\in\mathbb{Z}^2$, let the elementary scalar-valued Fourier modes $\exp(2\pi i \mathbf{k}\cdot x)$ on $\mathbb{T}^2$ be denoted $e_{\mathbf{k}}:\mathbb{T}^2\to\mathbb{C}$.
In the following theorem we construct the optimal perturbation $\dot{T}:\mathbb{T}^2\to\mathbb{C}^2$ with the ansatz
\begin{equation}
 \label{dotTeqn} \dot{T}=\left(\sum_{\mathbf{k}\in\mathbb{Z}^2} a^{(1)}_{\mathbf{k}}e_{\mathbf{k}},\sum_{\mathbf{l}\in\mathbb{Z}^2} 
 a^{(2)}_{\mathbf{l}}e_{\mathbf{l}}\right),
 \end{equation}
where 
$a^{(1)}_{\mathbf{k}}$ and $a^{(2)}_{\mathbf{l}}$ are complex scalar coefficients.
\begin{theorem}
\label{mainthm1}
Let $\{T_\delta\}_{0\le\delta<\delta_0}$ be a family of $C^{\rr+1}$ Anosov maps $T_\delta:\mathbb{T}^2\to\mathbb{T}^2$ satisfying \eqref{Ttaylor}, and $c\in C^3(\mathbb{T}^2,\mathbb{R})$ an observation function chosen so that $J$ does not uniformly vanish.
The perturbation $\dot{T}\in H^7(\mathbb{T}^2,\mathbb{R}^2)$ that maximises the expected linear response $\int_{\mathbb{T}^2} c\cdot R(\dot T)$ (solves the optimisation problem \eqref{optobs}--\eqref{optobs2}) is given by
\eqref{dotTeqn} with coefficients  
\begin{equation}
    \label{akneqnthm}
a_{\mathbf{k}}^{(1)}=-\int c\cdot \overline{(I-\mathcal{L}_0)^{-1} \mathcal{L}_0\left(\nabla\cdot\left(  f_0(\cdot)(D_\cdot T_0)^{-1}(e_{\mathbf{k}},0)(\cdot))\right)\right)} \left. \middle/ \nu  \left(\sum_{m=0}^7  (2\pi)^{2m} |\mathbf{k}|^{2m}_2\right)\right.,
and 
\end{equation}
\begin{equation}
\label{alneqnthm}
a_{\mathbf{l}}^{(2)}=-\int c\cdot \overline{(I-\mathcal{L}_0)^{-1} \mathcal{L}_0\left(\nabla\cdot \left( f_0(\cdot)(D_\cdot T_0)^{-1}(0,e_{\mathbf{l}})(\cdot))\right)\right)} \left. \middle/ \nu  \left(\sum_{m=0}^7  (2\pi)^{2m} |\mathbf{l}|^{2m}_2\right)\right.
\end{equation}
where $\nu>0$ is chosen so that $\|\dot{T}\|_{H^7}=1$;  that is $$\sum_{\mathbf{k}\in\mathbb{Z}^2}\left(a_{\mathbf{k}}^{(1)}\right)^2\sum_{m=0}^7 \left(2\pi|\mathbf{k}|_2\right)^{2m}+\sum_{\mathbf{l}\in\mathbb{Z}^2}\left(a_{\mathbf{l}}^{(2)}\right)^2\sum_{m=0}^7 \left(2\pi|\mathbf{l}|_2\right)^{2m}=1.$$

\end{theorem}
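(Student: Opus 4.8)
The plan is to exploit the fact that the feasible set is the unit ball of the Hilbert space $\mathcal{H}=H^7(\TT,\mathbb{C}^2)$, so that by Riesz representation the linear functional $J$ is represented by a unique vector $u\in\mathcal{H}$ with $J(\dot T)=\langle \dot T,u\rangle_{H^7}$, and the maximiser of a linear functional over the unit ball is $\dot T=u/\|u\|_{H^7}$. Thus the entire task reduces to computing the Fourier coefficients of $u$, i.e.\ computing $J(e_{\mathbf{k}}\text{-type basis vectors})$ and then reading off $u$ via the $H^7$ inner product. Concretely, I would first record that $J$ is continuous (Theorem \ref{thm:deriv}), hence bounded on $\mathcal{H}$, so Riesz applies; then note that the optimiser on $B_{H^7}(0,1)$ is $u/\|u\|_{H^7}$ by Cauchy--Schwarz (with equality iff $\dot T$ is a positive multiple of $u$), which also re-derives uniqueness already guaranteed by Proposition \ref{propoptigen}.

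\textbf{Computing the coefficients.} Write $u=\big(\sum_{\mathbf{k}} b^{(1)}_{\mathbf{k}}e_{\mathbf{k}},\ \sum_{\mathbf{l}} b^{(2)}_{\mathbf{l}}e_{\mathbf{l}}\big)$. Since $J$ is linear and $R$ is linear in $\dot T$ (Theorem \ref{thmresp}), applying $J$ to the test perturbation $(e_{\mathbf{k}},0)$ gives
\begin{equation*}
J((e_{\mathbf{k}},0))=\int c\cdot R((e_{\mathbf{k}},0))=\int c\cdot (I-\mathcal{L}_0)^{-1}\dot{\mathcal{L}}((e_{\mathbf{k}},0))(f_0),
\end{equation*}
and substituting the divergence representation \eqref{eq:divrep} for $\dot{\mathcal{L}}$ yields exactly the numerator of \eqref{akneqnthm} (up to the sign and the conjugation, which I address below). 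On the other hand, because the Fourier modes $\{e_{\mathbf{k}}\}$ are orthogonal in every $H^m$ with $\langle e_{\mathbf{k}},e_{\mathbf{k}}\rangle_{H^m}=(2\pi|\mathbf{k}|_2)^{2m}$ and $D^m e_{\mathbf{k}}\cdot\overline{D^m e_{\mathbf{j}}}$ integrates to zero for $\mathbf{j}\neq\mathbf{k}$, one gets $\langle (e_{\mathbf{k}},0),u\rangle_{H^7}=\overline{b^{(1)}_{\mathbf{k}}}\sum_{m=0}^7(2\pi|\mathbf{k}|_2)^{2m}$. Setting this equal to $J((e_{\mathbf{k}},0))$ and solving for $b^{(1)}_{\mathbf{k}}$ produces the stated formula after taking complex conjugates; the $a$-coefficients are then $b$-coefficients divided by $\nu:=\|u\|_{H^7}$, which is the normalisation displayed at the end of the theorem. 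The same computation with $(0,e_{\mathbf{l}})$ gives \eqref{alneqnthm}.

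\textbf{Technical points and the main obstacle.} Two care points: first, justifying the interchange of the (infinite) Fourier sum in \eqref{dotTeqn} with the continuous operators $R$ and $\int c\cdot(-)$, which follows from continuity of $J$ on $H^7\subset C^{\rr+1}$ established in Theorem \ref{thm:deriv} together with $H^7$-convergence of the partial sums (one checks that truncations of \eqref{dotTeqn} converge in $H^7$, hence in $C^{\rr+1}$, hence $J$ of the truncations converges to $J(\dot T)$). Second, the bookkeeping of conjugates: since $c$ and the true optimal $\dot T$ are real, the coefficients satisfy the Hermitian symmetry $a^{(j)}_{-\mathbf{k}}=\overline{a^{(j)}_{\mathbf{k}}}$, and one should remark that the complexified problem's solution is automatically real-valued, so that \eqref{dotTeqn} indeed defines an element of $H^7(\TT,\mathbb{R}^2)$ — this uses that $c$ is real and that $f_0$, $T_0$, $\mathcal{L}_0$ are real objects, so conjugating the formula for $a^{(j)}_{\mathbf{k}}$ sends it to $a^{(j)}_{-\mathbf{k}}$. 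The main obstacle is not any single deep estimate — all the hard analysis (continuity of $\dot{\mathcal{L}}$ and $R$, invertibility of $I-\mathcal{L}_0$ on $V_{1,2}$) is already in place — but rather assembling these Riesz/Fourier manipulations cleanly while correctly tracking the conjugations, the sign from \eqref{eq:divrep}, and the $m$-sum weights $\sum_{m=0}^{7}(2\pi|\mathbf{k}|_2)^{2m}$ coming from the $H^7$ norm, and verifying the resulting series for $\dot T$ genuinely converges in $H^7$ (equivalently that $\nu<\infty$), which uses that $J((e_{\mathbf{k}},0))$ decays fast enough in $\mathbf{k}$ — this in turn follows from the bound \eqref{Ldotbound} applied with $\dot T=(e_{\mathbf{k}},0)$, whose $C^5$ norm grows only polynomially in $|\mathbf{k}|$, against the $|\mathbf{k}|^{14}$ growth of the denominator.
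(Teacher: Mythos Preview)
Your proposal is correct and follows essentially the same approach as the paper: Riesz representation gives $u\in H^7$ with $J(\dot T)=\langle \dot T,u\rangle_{H^7}$, Cauchy--Schwarz identifies the maximiser as $u/\|u\|_{H^7}$, and evaluating $J$ on the basis vectors $(e_{\mathbf{k}},0)$, $(0,e_{\mathbf{l}})$ together with \eqref{eq:divrep} yields the stated coefficients. The only minor differences are that the paper establishes real-valuedness of $u$ by applying Riesz on the real subspace rather than via Hermitian symmetry, and your concern about $H^7$-convergence of the series (equivalently $\nu<\infty$) is unnecessary since Riesz already delivers $u\in H^7$.
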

\begin{proof} 
{When $J(\cdot)$ is non-degenerate on the Hilbert space $H^7:=H^7(\TT,\mathbb{C}^2)$, the Riesz representation theorem ensures the existence of a unique $u\in H^7\backslash \{0\}$ such that for any $\Td\in H^7$,
\begin{align*}
    J(\Td)=\langle \Td,u\rangle_{H^7}.
\end{align*}
We first prove that $u$ is real-valued, i.e.\ $\Im(u)=0$. Since $\langle \cdot,\cdot\rangle_{H^7}$ also represents a scalar product on the subspace $\{v \in H^7: \Im(v)=0\}$, Riesz applied to this subspace guarantees a $u_{\rm real}\in  \{v \in H^7: \Im(v)=0\}$ such that for any real-valued $\Td\in H^7(\TT,\mathbb{R}^2)$, one has  $J(\Td)=\langle \Td,u_{\rm real}\rangle_{H^7}$.
Because any $\Td\in H^7(\TT,\mathbb{C}^2)$ can be written as $\Td=\Re(\Td)+i\Im(\Td)$, 
\begin{align*}
     J(\Td)&=\langle \Td,u\rangle_{H^7}\\
     &=\langle \Re(\Td),u\rangle_{H^7}+i\langle \Im(\Td),u\rangle_{H^7}\\
     &=\langle \Re(\Td),u_{\rm real}\rangle_{H^7}+i\langle \Im(\Td),u_{\rm real}\rangle_{H^7}\\
     &=\langle \Td,u_{\rm real}\rangle_{H^7}
\end{align*}
Thus $u=u_{\rm real}$ and $\Im(u)=0$.
By the Cauchy--Schwarz inequality on $\{v\in H^7: \Im(v)=0\}$, for $\Td \in H^7(\TT,\mathbb{R}^2)$,
\begin{align*}
  J(\Td)=\langle \Td,u\rangle_{H^7}=\Re(\langle \Td,u\rangle_{H^7})\leq    \|\Td\|_{H^7}\|u\|_{H^7},
\end{align*}
}
 and $J(\cdot)$ attains its maximum on the $H^7(\TT,\mathbb{R}^2)$ unit ball at $\tilde T:=\frac{u}{\|u\|_{H^7}}$. Let $a_{\mathbf{k}}^{(1)}$ and $a_{\mathbf{l}}^{(1)}$ be the Fourier coefficients in the Fourier decomposition of $\tilde T$;  that is, $\tilde{T}:=\sum_{k\in \ZZ}a_{\mathbf{k}}^{(1)}(e_{\mathbf{k}},0)+\sum_{l\in \ZZ}a_{\mathbf{l}}^{(1)}(0,e_{\mathbf{l}})$. 
Then 
\begin{align*}
\langle (e_{\mathbf{k}},0),\tilde T\rangle_{H^7}=\overline{a_{\mathbf{k}}^{(1)}}\|(e_{\mathbf{k}},0)\|_{H^7}^2.
\end{align*}

Thus
\begin{equation}
{\overline{a_{\mathbf{k}}^{(1)}}}=\frac{\langle (e_{\mathbf{k}},0),\tilde T\rangle_{H^7}}{\|(e_{\mathbf{k}},0)\|_{H^7}^2}  =\frac{\langle (e_{\mathbf{k}},0),u\rangle_{H^7}}{\|u\|_{H^7}\|(e_{\mathbf{k}},0)\|_{H^7}^2}
=\frac{J((e_{\mathbf{k}},0))}{\|u\|_{H^7}\|(e_{\mathbf{k}},0)\|_{H^7}^2}.
    \label{Jfourier}
\end{equation}

Recall that by definition (see Theorem \ref{thm:deriv}), for $\Td\in H^7$, $J(\Td)=\int c\cdot R(\Td)$. 
Now from \eqref{eq:response} and \eqref{eq:divrep}, one has 
\begin{equation*}
    J(\Td)=\int c\cdot R(\Td)=\int c\cdot (I-\LG_0)^{-1}\LGd(\Td)f_0=-\int c\cdot (I-\LG_0)^{-1}\LG_0\left(\nabla \cdot \left(f_0(D T_0)^{-1}(\dot T)\right)\right).
\end{equation*}

Because $\|(e_{\mathbf{k}},0)\|_{H^7}^2=\sum_{m=0}^7  (2\pi)^{2m} |\mathbf{k}|^{2m}_2$, by \eqref{Jfourier} we arrive at 
\begin{align*}
   a_{\mathbf{k}}^{(1)}= -\int c\overline{\cdot (I-\mathcal{L}_0)^{-1} \mathcal{L}_0\left(\nabla\cdot\left(  f_0(\cdot)(D_\cdot T_0)^{-1}(e_{\mathbf{k}},0)(\cdot))\right)\right)} \left. \middle/ \nu  \left(\sum_{m=0}^7  (2\pi)^{2m} |\mathbf{k}|^{2m}_2\right)\right.,
\end{align*}
where $\nu:=\|u\|_{H^7}$. The same reasoning holds for $a_{\mathbf{l}}^{(2)}$ and the choice of  $\nu$ ensures $\|\tilde T\|_{H^7}=1$. 
\end{proof}

\section{Numerically computing the optimal perturbation $\dot{T}$}
\label{sec:numerics}
This section describes how to compute the vector-valued optimal $\dot{T}$ through the coefficients $a_{\mathbf{k}}^{(1)}, a_{\mathbf{l}}^{(2)}$ in Theorem \ref{mainthm1}.
The main numerical goal is to compute the integrals in \eqref{akneqnthm} and \eqref{alneqnthm}, respectively.
First, we begin with a general formal convergence result, which will later be specialised to our numerical algorithm.

 \subsection{A general convergence result for the optimal perturbation $\dot{T}$}

For suitably small $\epsilon_0>0$, we introduce kernels $k_\epsilon\in
C^\infty(\TT,[0,\infty))$ for ${\epsilon\in (0,\epsilon_0]}$, which satisfy
$\int k_\epsilon\ dm=1$.
According to the proof of Lemma 5.1 \cite{CF20}, the kernel operator $f\in C^\infty(\TT,\mathbb{R})\mapsto k_\epsilon*f \in C^\infty(\TT,\mathbb{R})$ can be extended into a continuous linear operator $K_\epsilon \in L(\B^{p,q})$ for any $p\in \NN$ and $q>0$.

We form approximate transfer operators 
$\mathcal{L}_{\epsilon}=K_\epsilon\circ\mathcal{L}_0$\footnote{In section \ref{sec:numerics}, $\mathcal{L}_\epsilon$ will denote a perturbation of $\mathcal{L}_0$ due to convolution by kernel $k_\epsilon$.  This overloads the previous notation $\mathcal{L}_\delta$, which denoted a perturbation of $\mathcal{L}_0$ due to a perturbation of $T_0$, but as we never consider such $\mathcal{L}_\delta$ in this section, there should be no confusion. A similar overloading occurs with $f_\epsilon$ vs $f_\delta$.}.
{Whenever the family $\{\LG_\epsilon\}_{\epsilon \geq 0}$ satisfies the Keller Liverani (KL) conditions (KL) from \cite{CF20} on $(\|\cdot\|_{p,q},\|\cdot\|_{p-1,q+1})$ for any $p,q\in \NN$ such that $p+q=2$, they consist of quasi-compact operators on $\B^{p,q}$ and for $0<\epsilon\le \epsilon_0$ admit a unique leading eigenvalue with a unique corresponding eigendistribution denoted $f_{\epsilon}$.}
By replacing $\mathcal{L}_0$ with $\mathcal{L}_{\epsilon}$ and $f_0$ with $f_{\epsilon}$, {whenever it is well defined,} in \eqref{akneqnthm} and \eqref{alneqnthm}, we obtain coefficients denoted $a^{(1)}_{\mathbf{k},\epsilon}$ and $a^{(2)}_{\mathbf{l},\epsilon}$:
\begin{equation}
    \label{akneqnthmeps}
a^{(1)}_{\mathbf{k},\epsilon}:=-\int c\cdot \overline{(I-\mathcal{L}_\epsilon)^{-1} \mathcal{L}_\epsilon\left(\nabla\cdot\left(  f_\epsilon(\cdot)(D_\cdot T_0)^{-1}(e_{\mathbf{k}},0)(\cdot))\right)\right)} \left. \middle/ \nu_\epsilon  \left(\sum_{m=0}^5  (2\pi)^{2m} |\mathbf{k}|^{2m}_2\right)\right.,
and 
\end{equation}
\begin{equation}
\label{alneqnthmeps}
a^{(2)}_{\mathbf{l},\epsilon}:=-\int c\cdot \overline{(I-\mathcal{L}_\epsilon)^{-1} \mathcal{L}_\epsilon\left(\nabla\cdot \left( f_\epsilon(\cdot)(D_\cdot T_0)^{-1}(0,e_{\mathbf{l}})(\cdot))\right)\right)} \left. \middle/ \nu_\epsilon  \left(\sum_{m=0}^5  (2\pi)^{2m} |\mathbf{l}|^{2m}_2\right)\right.,
\end{equation}
where $\nu_\epsilon>0$ is chosen to ensure that $\|\dot{T}_\epsilon\|_{H^7}=1$, where $\dot{T}_\epsilon$ is given in \eqref{dotTeqneps}.
We wish to show $\lim_{\epsilon\to 0}a^{(1)}_{\mathbf{k},\epsilon}=a^{(1)}_\mathbf{k}$ and $\lim_{\epsilon\to 0}a^{(2)}_{\mathbf{l},\epsilon}=a^{(2)}_\mathbf{l}$, which will in turn imply convergence of our estimated $\dot{T}_\epsilon$ to $\dot{T}$.
We assume that the map $T_0$ and its derivative are known perfectly, and that the integrals in \eqref{akneqnthm} and \eqref{alneqnthm} are calculated perfectly;  in practice, this is true up to  numerical precision.

\begin{theorem}\label{thmmolli}
Let $c\in C^3(\mathbb{T}^2,\mathbb{R})$ and the kernels $\{k_\epsilon\}_{\epsilon\in (0,\epsilon_0]}$ be chosen such that the family of operator satisfy $(\|\cdot\|_{2,1},\|\cdot\|_{1,2})$ and $(\|\cdot\|_{1,2},\|\cdot\|_{0,3})$ (KL) conditions.
Then the sequence of vector fields \begin{equation}
 \label{dotTeqneps} \dot{T}_\epsilon:=\left(\sum_{\mathbf{k}\in\mathbb{Z}^2} a^{(1)}_{\mathbf{k},\epsilon}e_{\mathbf{k}},\sum_{\mathbf{l}\in\mathbb{Z}^2} 
 a^{(2)}_{\mathbf{l},\epsilon}e_{\mathbf{l}}\right)
 \end{equation}
 converge in $H^7$ to the unique solution $\Td$ of the optimal linear response problem \eqref{optobs}--\eqref{optobs2}.

\end{theorem}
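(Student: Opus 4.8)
The plan is to show that the convergence $\dot T_\epsilon\to\dot T$ in $H^7$ follows from coefficient-wise convergence $a^{(i)}_{\mathbf{k},\epsilon}\to a^{(i)}_{\mathbf{k}}$ together with a uniform summable tail bound. The decisive ingredient is the stability of the spectral data of $\mathcal{L}_0$ under the Keller--Liverani perturbation $\mathcal{L}_\epsilon=K_\epsilon\circ\mathcal{L}_0$. First I would invoke the (KL) conditions on the pairs $(\|\cdot\|_{2,1},\|\cdot\|_{1,2})$ and $(\|\cdot\|_{1,2},\|\cdot\|_{0,3})$ to conclude, via \cite{KL99} (as packaged in our Theorem \ref{thm:resolv}) and the convergence theory of \cite{CF20}, that: (a) $\mathcal{L}_\epsilon$ is quasi-compact on $\mathcal{B}^{1,2}$ with a simple leading eigenvalue $\to 1$ and leading eigendistribution $f_\epsilon\to f_0$ in $\mathcal{B}^{0,3}$ (indeed in $\mathcal{B}^{1,2}$ after the usual relative-compactness upgrade, exactly as in the proof of Theorem \ref{thmresp}); (b) for $z$ in a fixed neighbourhood of $1$ bounded away from the rest of $\mathrm{sp}(\mathcal{L}_0)$, the resolvents satisfy $\|(z-\mathcal{L}_\epsilon)^{-1}-(z-\mathcal{L}_0)^{-1}\|_{V_{1,2}\to V_{0,3}}\to 0$; in particular $(I-\mathcal{L}_\epsilon)^{-1}\to(I-\mathcal{L}_0)^{-1}$ in that norm on the centered spaces.

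Next I would fix $\mathbf{k}$ and compare the integrand of $a^{(1)}_{\mathbf{k},\epsilon}$ with that of $a^{(1)}_{\mathbf{k}}$. Write $g_\epsilon:=-\mathcal{L}_\epsilon\bigl(\nabla\cdot(f_\epsilon(D T_0)^{-1}(e_{\mathbf{k}},0))\bigr)$ and $g_0$ the analogous object with $\epsilon=0$. Using Proposition \ref{lemmulti}(iv),(v) (the divergence maps $\mathcal{B}^{2,1}\to\mathcal{B}^{1,2}$ continuously and multiplication by the $C^3$ section $(D T_0)^{-1}(e_{\mathbf{k}},0)$ is bounded), together with $f_\epsilon\to f_0$ in $\mathcal{B}^{1,2}$ and $\|\mathcal{L}_\epsilon-\mathcal{L}_0\|_{\mathcal{B}^{1,2}\to\mathcal{B}^{0,3}}\to 0$, one gets $g_\epsilon\to g_0$ in $\mathcal{B}^{0,3}$; moreover $g_\epsilon\in V_{0,3}$ by construction so the centered resolvent applies. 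Then
\[
\bigl|a^{(1)}_{\mathbf{k},\epsilon}-a^{(1)}_{\mathbf{k}}\bigr|\le \frac{C|c|_{C^3}}{\sum_{m=0}^7(2\pi)^{2m}|\mathbf{k}|^{2m}_2}\left(\frac{\|(I-\mathcal{L}_\epsilon)^{-1}g_\epsilon-(I-\mathcal{L}_0)^{-1}g_0\|_{0,3}}{\nu_\epsilon}+\Bigl|\tfrac1{\nu_\epsilon}-\tfrac1{\nu}\Bigr|\,\|(I-\mathcal{L}_0)^{-1}g_0\|_{0,3}\right),
\]
using Proposition \ref{lemmulti}(v) to pair against $c\in C^3\subset C^0$. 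The first bracketed term $\to 0$ by (b) and $g_\epsilon\to g_0$; the normalisation $\nu_\epsilon\to\nu$ will follow once we have the $H^7$ convergence of the unnormalised fields, so I would first prove convergence of $\dot T_\epsilon^{\rm unnorm}$ (the field with $\nu_\epsilon$ replaced by $1$) to $u$ in $H^7$ and deduce $\nu_\epsilon=\|u_\epsilon\|_{H^7}\to\|u\|_{H^7}=\nu$, hence $\nu_\epsilon$ is bounded below for small $\epsilon$ and $\dot T_\epsilon=u_\epsilon/\nu_\epsilon\to u/\nu=\dot T$.

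For the $H^7$ convergence of the unnormalised fields, I would establish a uniform-in-$\epsilon$ tail estimate. The key point is that $\|(I-\mathcal{L}_\epsilon)^{-1}\|_{V_{1,2}\to V_{0,3}}$ is uniformly bounded for small $\epsilon$ (again (KL)/\cite{KL99}), and $\|g_{\epsilon,\mathbf{k}}\|_{0,3}\le C\|f_\epsilon\|_{2,1}\,\|(D T_0)^{-1}(e_{\mathbf{k}},0)\|_{C^3}\le C'(1+|\mathbf{k}|_2)^{3}$ with $\|f_\epsilon\|_{2,1}$ uniformly bounded (Lasota--Yorke/quasi-compactness uniform in $\epsilon$). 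Hence $|a^{(1)}_{\mathbf{k},\epsilon}|\le C''(1+|\mathbf{k}|_2)^{3}/(\nu_\epsilon(2\pi)^{14}|\mathbf{k}|^{14}_2)$, so $\sum_{\mathbf{k}}|a^{(1)}_{\mathbf{k},\epsilon}|^2(2\pi|\mathbf{k}|_2)^{14}$ has a convergent tail uniformly in $\epsilon$; the same for the second component. Combined with coefficient-wise convergence this gives $\dot T_\epsilon^{\rm unnorm}\to u$ in $H^7$ by a standard $\varepsilon/3$ argument (split into low modes, controlled by coefficient-wise convergence, and high modes, controlled uniformly by the tail bound). Finally, by Theorem \ref{mainthm1} the limit $u/\nu$ is exactly the unique optimiser $\dot T$ of \eqref{optobs}--\eqref{optobs2}, which completes the proof.

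The main obstacle I anticipate is item (a) above: verifying that the leading eigendistribution $f_\epsilon$ of $\mathcal{L}_\epsilon=K_\epsilon\circ\mathcal{L}_0$ not only converges in the weak norm $\mathcal{B}^{0,3}$ (which is immediate from \cite{CF20}/\cite{KL99}) but can be upgraded to convergence in $\mathcal{B}^{1,2}$ --- this requires the uniform Lasota--Yorke bound $\sup_\epsilon\|f_\epsilon\|_{2,1}<\infty$ so that $\{f_\epsilon\}$ is relatively compact in $\mathcal{B}^{1,2}$ (Proposition \ref{lemmulti}(ii)) with unique limit point $f_0$, exactly mirroring the argument already used in the proof of Theorem \ref{thmresp}. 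Checking that the chosen kernels $k_\epsilon$ make $\{\mathcal{L}_\epsilon\}$ satisfy the (KL) conditions on both pairs of norms is assumed in the hypothesis, so it does not need to be reproved here, but one must be careful that the same $\epsilon_0$ and uniform constants work simultaneously for the $(2,1)\to(1,2)$ and $(1,2)\to(0,3)$ scales, since both are used --- the former to control $g_\epsilon$ and $f_\epsilon$, the latter to control $(I-\mathcal{L}_\epsilon)^{-1}$.
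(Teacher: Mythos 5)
Your proposal is correct and follows essentially the same route as the paper: invoke the (KL) conditions on both norm pairs to get the resolvent convergence of Theorem \ref{thm:resolv} and a uniform Lasota--Yorke bound on $\|f_\epsilon\|_{2,1}$, deduce $f_\epsilon\to f_0$ in $\mathcal{B}^{1,2}$, conclude coefficient-wise convergence of the $a^{(i)}_{\mathbf{k},\epsilon}$, and pass to $H^7$ convergence of the fields. The only difference is that you spell out the last step (the uniform polynomial tail bound on the coefficients and the convergence of the normalisation $\nu_\epsilon\to\nu$) which the paper compresses into ``by convergence of Fourier series''; this is a welcome elaboration rather than a divergence in method.
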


\begin{proof}
 {The} $(\|\cdot\|_{1,2},\|\cdot\|_{0,3})$  (KL) condition, in particular (KL1), implies that
 there is a monotone increasing
map $\tau : (0,\epsilon_0]\to[0,\infty)$ such that $\lim_{\epsilon \to \infty} \tau(\epsilon)=0$ and the operator 
$\LG_\epsilon:=K_{\epsilon}\circ\LG_0$ satisfies
\begin{align*}
\|\LG_\epsilon-\LG_0\|_{\B^{1,2} \rightarrow \B^{0,3}}\leq \tau(\epsilon),
\end{align*}

{The} $(\|\cdot\|_{2,1},\|\cdot\|_{1,2})$ and $(\|\cdot\|_{1,2},\|\cdot\|_{0,3})$ (KL) conditions and more specifically (KL3) imply the following Lasota--Yorke inequalities adapted respectively to the pair of Banach spaces $(\B^{2,1},\B^{1,2})$ and $(\B^{1,2},\B^{0,3})$:
$\exists \alpha <1$ and $M>\alpha$, {such that for all}
 $m\in \NN$, $\epsilon>0$ and  $f\in \B^{2,1}$ one has
\begin{align}\label{eqlymolli2}
\|\LG_\epsilon^m f\|_{2,1}\leq C\alpha^m \|f\|_{2,1}+CM^m \|f\|_{1,2},
\end{align}
{and for all $f\in\B^{1,2}$, one has}
\begin{align}
\nonumber \|\LG_\epsilon^m f\|_{1,2}\leq C\alpha^m \|f\|_{1,2}+CM^m \|f\|_{0,3}.
\end{align}
Since $\B^{1,2}$ is compactly injected into $\B^{0,3}$, the second uniform Lasota--Yorke inequality above implies that there is $C_0>0$ such that for any $\epsilon>0$ and any $f\in \B^{1,2}$,
\begin{align}
  \nonumber  \|\LG_\epsilon f\|_{1,2}\leq C_0\|f\|_{1,2};
\end{align}
in other words it satisfies item 1) (on $\mathcal{B}^{1,2}$) and items 2) and 3) from Theorem \ref{thm:resolv}. 
Thus, {applying Theorem \ref{thm:resolv}, we see that} \eqref{eq:resolv2} holds, namely for any $\rho>\alpha$ there is $\eta_\rho>0$ such that for any $z\in \mathbb{C}$ with $|z|>\rho$ and $d(z,sp(\LG_0))>0$, there is $C(z)>0$ such that
\begin{align}
\nonumber       \|(z-\LG_\epsilon)^{-1}-(z-\LG_0)^{-1}\|_{V_{1,2}\rightarrow V_{0,3}}\leq C(z)\tau(\epsilon)^{\eta_\rho}.
\end{align}

In order to demonstrate the continuity of $a^{(i)}_{\mathbf{k},\epsilon}$ as $\epsilon\to 0$ we begin to study the $\epsilon\to 0$ limit of the integrand in \eqref{akneqnthmeps}. 
To this end, we let
$g_{k,\epsilon}:= \nabla\cdot\left(  f_\epsilon(\cdot)(D_\cdot T_0)^{-1}(e_{\mathbf{k}},0)(\cdot))\right)\in V^{1,2}$. Recall that according to Theorem 2.3 in \cite{GL06}, there is $0<\sigma<1$ such that for $f\in V_{1,2}$, 
$
\|\LG_0^nf\|_{1,2}\leq \sigma^n\| f\|_{1,2}.
$
Thus $(I-\LG_0)^{-1} : V_{1,2}\mapsto V_{1,2}$ is a continuous linear operator, then one has,
\begin{align}
    &\|(I-\mathcal{L}_\epsilon)^{-1} \mathcal{L}_\epsilon\left(g_{k,\epsilon}\right)-(I-\mathcal{L}_0)^{-1} \mathcal{L}_0\left(g_{k,\epsilon}\right)\|_{0,3}\leq \nonumber \\
    &\|(I-\mathcal{L}_\epsilon)^{-1} -(I-\mathcal{L}_0)^{-1}\|_{V^{1,2}\to V^{0,3}} \| \mathcal{L}_\epsilon \left(g_{k,\epsilon}\right)\|_{1,2}+\|(I-\mathcal{L}_0)^{-1}\|_{V^{1,2}}\|\LG_\epsilon-\LG_0 \|_{\B^{1,2}\to \B^{0,3}}\|g_{k,\epsilon}\|_{1,2}\nonumber\\
    &\leq \tau(\epsilon)^{\eta_\rho}C_0\|g_{k,\epsilon}\|_{1,2} +\tau(\epsilon)\|(I-\mathcal{L}_0)^{-1}\|_{V^{1,2}\to V^{0,3}}\|g_{k,\epsilon}\|_{1,2}.\label{eq:contresp}
\end{align}
What remains is to uniformly bound $\|g_{k,\epsilon}\|_{1,2}$ for $\epsilon\in (0,\epsilon_0]$ to prove that equation \eqref{eq:contresp} goes to $0$ and then to verify the convergence of $(I-\mathcal{L}_0)^{-1} \mathcal{L}_0\left(g_{k,\epsilon}\right)$ in $\B^{0,3}$.

\paragraph{Uniform bound for $\|g_{k,\epsilon}\|_{1,2}$}
To bound $\|g_{k,\epsilon}\|_{1,2}$, we need to control $\|f_\epsilon\|_{2,1}$.  Note that since $f_\epsilon$ is an eigendistribution for the leading eigenvalue $\lambda_\epsilon$ of $\LG_\epsilon$, according to Corollary 1 from \cite{KL99}, for sufficiently small $\epsilon_0$, one has that $\lambda_\epsilon>\rho >\alpha$ for $\epsilon\in(0,\epsilon_0]$ and some $\rho>\alpha$.
The Lasota-Yorke inequality \eqref{eqlymolli2} then ensures, for any $m\in \NN$,
\begin{align*}
    \|f_\epsilon\|_{2,1}& =  \frac{1}{\lambda_\epsilon^m}\|\LG_\epsilon^mf_\epsilon\|_{2,1}\\
    &\leq C\frac{\alpha^m}{\lambda_\epsilon^m} \|f_\epsilon\|_{2,1}+C\frac{M^m}{\lambda_\epsilon^m} \|f_\epsilon\|_{1,2},
\end{align*}
thus for some $m_0$ taken large enough so that $\lambda_\epsilon^{m_0}-C\alpha^{m_0}>0$,
\begin{align}\label{eq:liftreg}
\|f_\epsilon\|_{2,1}\leq \frac{CM^{m_0}}{\lambda_\epsilon^{m_0}-C\alpha^{m_0}}\|f_\epsilon\|_{1,2}.    
\end{align}

As a consequence of Corollary 1 from \cite{KL99},  $f_\epsilon\to f_0$ in $\B^{1,2}$ : indeed, recall from the notation of \cite{KL99} that $\Pi_\epsilon$ is the projection onto $f_\epsilon$, which is the eigenfunction satisfying $\int f_\epsilon=1$. 
Thus from item 1) of Corollary 1 from \cite{KL99}, 
\begin{align}\label{eq:perturbcorun}
\|\Pi_\epsilon(f_0)-\Pi_0(f_0)\|_{1,2}\leq K\tau^{\eta_\rho}(\epsilon)
\end{align}
Let's write $\Pi_\epsilon(f_0)$ as $\Pi_\epsilon(f_0)=\Gamma_\epsilon(f_0)f_\epsilon$ with $\Gamma_\epsilon(\cdot)$ being a linear form. 
Integrating the previous equality, we obtain $\int \Pi_\epsilon(f_0)=\Gamma_\epsilon(f_0)\int f_\epsilon = \Gamma_\epsilon(f_0)$. Since $\int \Pi_0(f_0)=1$, using  \eqref{eq:perturbcorun} we deduce that
$\Gamma_\epsilon(f_0)\to 1$. {We now further conclude from \eqref{eq:perturbcorun} that}
\begin{align}
\label{eq:conv12}
\|f_\epsilon-f_0\|_{1,2}\underset{\epsilon\to 0}{\to} 0.
\end{align}

Returning now to \eqref{eq:contresp}, inequality \eqref{eq:conv12} and \eqref{eq:liftreg} imply that the family $(f_\epsilon)_{\epsilon>0}$ is bounded in $\B^{2,1}$, thus by item iv) of Proposition \ref{lemmulti}, the family $(g_{k,\epsilon})_{\epsilon\in (0,\epsilon_0]}$ is also bounded in $\B^{1,2}$ and 
since $\lim_{\epsilon \to 0} \tau(\epsilon)=0$, the RHS of \eqref{eq:contresp} tends to $0$ 
\paragraph{Verifying that $(I-\mathcal{L}_0)^{-1} \mathcal{L}_0\left(g_{k,\epsilon}\right)$ converges in $\B^{0,3}$.}

We first check the convergence of $g_{k,\epsilon}$ in $\B^{0,3}$.
Recall that $g_{k,\epsilon}=\nabla\cdot\left(  f_\epsilon (\cdot)(D_\cdot T_0)^{-1}(e_{\mathbf{k}},0)(\cdot))\right)$, thus according to items (ii), (iv), and (v) from Proposition \ref{lemmulti}, and the convergence of $(f_\epsilon)_{\epsilon>0}$ toward $f_0$ in $\B^{1,2}$ stated in \eqref{eq:conv12}, $g_{k,\epsilon}\to g_k:= \nabla\cdot\left(  f_0(\cdot)(D_\cdot T_0)^{-1}(e_{\mathbf{k}},0)(\cdot))\right)$ in $\B^{0,3}$.
{
Since $(g_{k,\epsilon})_{\epsilon>0}$ is bounded in $\B^{1,2}$ and $(I-\LG_0)^{-1} : \B^{1,2}\mapsto \B^{1,2}$ is continuous, $(I-\LG_0)^{-1}\LG_0g_{k,\epsilon}$ is bounded in $\B^{1,2}$ and thus one can extract some subsequence converging in $\B^{0,3}$. Let $H$ be the limit in $B^{0,3}$ of such subsequence denoted $(\epsilon_n)_{n\in \mathbb N}$. Then by continuity of $(I-\LG_0) : \B^{p,q}\mapsto \B^{p,q}$ for $p+q=3$,
\begin{align*}
    (I-\LG_0)H&=\lim_{n\to \infty} (I-\LG_0)(I-\LG_0)^{-1}\LG_0g_{k,\epsilon_n}\\
    &=\LG_0 g_k,
\end{align*}
where the above limit is taken in $\B^{0,3}$. We deduce that $H=(I-\LG_0)^{-1}\LG_0 g_k$. By unicity of the limit, we deduce the convergence of $(I-\mathcal{L}_0)^{-1} \mathcal{L}_0\left(g_{k,\epsilon}\right)$ to $(I-\mathcal{L}_0)^{-1} \mathcal{L}_0\left(g_k\right)$ in $\B^{0,3}$.
}
Thus the continuity (see item (v) from Proposition \ref{lemmulti}) of $h\in \B^{0,3} \mapsto \int c\cdot h$ 
allows us to deduce from the expression of 
$a^{(i)}_{\mathbf{k},\epsilon}$ in \eqref{akneqnthmeps} and \eqref{alneqnthmeps} that for any $i\in\{1,2\}$ and any $\mathbf{k}\in \ZZ^2$,
\begin{align*}
\lim_{\epsilon\to 0}a^{(i)}_{\mathbf{k},\epsilon}&= \lim_{\epsilon\to 0}- \frac 1 {\nu_\epsilon  \left(\sum_{m=0}^5  (2\pi)^{2m} |\mathbf{k}|^{2m}_2\right)}\int c\cdot \overline{(I-\mathcal{L}_\epsilon)^{-1} \mathcal{L}_\epsilon\left( g_{k,\epsilon}\right)}  \\
&=a^{(i)}_{\mathbf{k}}.
\end{align*}
Thus by convergence of Fourier series,
$$
\lim_{\epsilon\to 0}\|\Td_\epsilon-\Td\|_{\mathcal{H}}=0.
$$
\end{proof}

\begin{remark}
    \label{Leneremark}
If in \eqref{akneqnthmeps} and \eqref{alneqnthmeps} one replaces the terms that play the role of  $R(\Td)$ in \eqref{eq:response} with the ``metaformula'' (1.4) from \cite{Ba14}, the resulting coefficients would again yield a vector field that converges to the optimal perturbation $\Td$.
\end{remark}

\subsubsection{Examples of kernels that satisfy the hypotheses of Theorem \ref{thmmolli}}
\begin{enumerate}
    \item
When the family $(k_\epsilon)_{\epsilon}$  has compact support ($\mathrm{supp\ }k_\epsilon\subset B(0,\epsilon)$) they satisfy $(\|\cdot\|_{2,1},\|\cdot\|_{1,2})$ and $(\|\cdot\|_{1,2},\|\cdot\|_{0,3})$ (KL) conditions (see Lemma 5.2 from \cite{CF20}).
Moving toward a numerical implementation, one could replace 
$K_\epsilon$ with $K_\epsilon\circ\mathcal{D}_{n(\epsilon)}$ (still for $(k_\epsilon)_{\epsilon}$ a family of kernel with compact support), where $\mathcal{D}_{n(\epsilon)}$ is convolution with the Dirichlet kernel (truncating to the first $n(\epsilon)$ Fourier modes for large enough $n$).
This strategy is justified by Proposition 5.3 \cite{CF20}. 
\item If $T$ satisfies condition (15) from \cite{CF20} (roughly speaking, the norm of $\mathcal{L}_0$ remains bounded under translation of $T$ on the torus), setting $k_\epsilon$ to be a family of $n^{\rm th}$-order Fejer kernels satisfies the hypotheses of Theorem \ref{thmmolli} (see Proposition 6.1  \cite{CF20}), and is particularly useful for numerical implementation.
The use of these kernels is explored in the next subsection.
\end{enumerate}
Having established the above general convergence theorem, we turn our attention to numerically approximating $\mathcal{L}_0$ and $f_0$.
The following subsections detail the computations of the central objects required to estimate the coefficients $a^{(1)}_\mathbf{k}$ and $a^{(2)}_\mathbf{l}$.

\subsection{Numerically estimating the transfer operator $\mathcal{L}_0$} 
\label{sec:Lnumerical}

Because we have a differentiable map on a periodic domain, it is natural to use a Fourier basis to estimate $\mathcal{L}_0$.
The action of $\mathcal{L}_0$ in Fourier space will be represented by {a matrix} acting on the standard Fourier basis $e_\mathbf{k}$ on $\mathbb{T}^2$, for $\mathbf{k}\in F_n:=\{-n/2+1,\ldots,n/2\}^2$.
We desire formal convergence properties for the numerically estimated SRB measure in $\mathcal{B}^{p,q}$ and therefore cannot simply project $\mathcal{L}_0$ onto a truncated basis $\{e_\mathbf{k}\}_{\mathbf{k}\in F_n}$.
Instead we require some mollification prior to projection;  because of its simplicity and strong theoretical properties we use the Fejer kernel scheme of \cite{CF20}.

\subsubsection{Fej\'er kernel mollification}\label{subsecmollifier}
Define the $n$th one-dimensional Fej{\'e}r kernel $K_{n,1}$ on $\mathbb{T}^1 = \mathbb{R} / \mathbb{Z}$ by
\begin{equation*}
  K_{n,1}(x) = \sum_{k=-n}^n \left(1 - \frac{|k|}{n+1} \right)e^{2 \pi i k x}.
\end{equation*}
The $n$th $d$-dimensional Fej{\'e}r kernel $K_{n,d}$ on $\mathbb{T}^d$ is defined by taking a $d$-fold Cartesian product of the one-dimensional kernels:
\begin{equation*}
  K_{n,d}(x) = \prod_{i=1}^d K_{n,1}(x_i),
\end{equation*}
where $x_i$ is the $i$th component of ${x} \in \mathbb{T}^d$.
The Fej\'er kernel is particularly convenient because convolution may be constructed by a simple reweighting of Fourier series:
\begin{equation}\label{eq:fejer_fourier_series}
  (K_{n,d} * f)({x}) = \sum_{\substack{\mathbf{k} \in \mathbb{Z}^d \\ |{\mathbf{k}}|_\infty \le n}} \prod_{i=1}^d \left(1 - \frac{|{k_i}|}{n+1}\right)\hat{f}(k) e^{2\pi i \mathbf{k} \cdot {x}},
\end{equation}

The approximate transfer operators acting on $\mathcal{B}_{p,q}$ are defined by\footnote{We again overload the subscripts on $\mathcal{L}$ and $f$, but the meaning should be clear from the context.}
{
\begin{equation*}
\mathcal{L}_{n}f:=K_{n,2} * (\mathcal{L}_0 f):=(\Pi_n\circ\mathcal{L}_0)f.
\end{equation*}
}
Section 6 \cite{CF20} discusses the satisfaction of the hypotheses (S1) and (S3) (in \cite{CF20}), where this numerical approach is developed.
The operator $\mathcal{L}$ should also satisfy equation (15) in \cite{CF20}.
Corollary 6.4 \cite{CF20}  states that if we convolve the transfer operator with a Fejer kernel (parameterised by $n$ as above), this sequence of operators in $n$ satisfy the $(\|\cdot\|_{p,q},\|\cdot\|_{p-1,q+1})$ (KL) conditions of \cite{CF20}.

\subsubsection{Matrix representation}

\label{sec:Lmatrix}
We consider the operator $\mathcal{L}_{n}$ to act on $\mathcal{B}^{2,1}$.
{Note that the image of $\mathcal{B}^{2,1}$ under $\mathcal{L}_n$ is contained in the span of $\{e_\mathbf{k}\}_{\mathbf{k}\in F_n}$, which is itself invariant under $\mathcal{L}_n$. Therefore when computing the eigenfunction of $\mathcal{L}_n$ corresponding to the largest-magnitude eigenvalue, we may restrict the action of $\mathcal{L}_n$ to $\{e_\mathbf{k}\}_{\mathbf{k}\in F_n}$;  this restricted action has a finite-dimensional representation.}
To obtain a representation of $\mathcal{L}_{n}$ in this basis, we {first fix some notation. Denote the usual $L^2$ inner product $\langle f,g\rangle =\int_{\mathbb{T}^2} f\cdot\overline{g}$. 
One has $f=\sum_{\mathbf{k}\in\mathbb{Z}^2} \langle f,e_\mathbf{k}\rangle e_\mathbf{k}=\sum_{\mathbf{k}\in\mathbb{Z}^2} \hat{f}(\mathbf{k}) e_\mathbf{k}$. 
We now compute}
\begin{eqnarray}
\nonumber [\hat{L}_n]_{\mathbf{k}\mathbf{j}}:=\langle \mathcal{L}_{n} e_\mathbf{k},e_\mathbf{j}\rangle&=&\int_{\mathbb{T}^2}\left(\int_{\mathbb{T}^2} K_{n,2}(x-y)\mathcal{L}_0(e_\mathbf{k}(y))\ dy\right)\overline{e_\mathbf{j}(x)}\ dx\\
\label{used}&=&\widehat{K_{n,2}}(\mathbf{j})\cdot\widehat{\left({e_{-\mathbf{j}}}\circ T\right)}({\mathbf{-k}}),
\end{eqnarray}
where the steps from the first to the second line above are elaborated upon in equation (24) \cite{CF20}.
The Fourier coefficients of $K_{n,2}$ can be immediately read off from \eqref{eq:fejer_fourier_series}.

\subsubsection{Discrete Fourier transform}
The Fourier integrals in \eqref{used} are computed 
 using Julia's two-dimensional discrete fast Fourier transform \verb"fft" in the \verb"FFTW.jl" package \cite{FFTW}.
 Corresponding to the frequency grid $F_n\subset \mathbb{Z}^2$ is a regular spatial grid on $\mathbb{T}^2$ of cardinality $n^2$, which we call a ``coarse grid''.
For $N\ge 4n$, spatial FFTs are taken on a refined regular $N\times N$ grid that contains the coarse grid.
This larger grid is called the ``fine grid'' and enables  accurate estimates of the higher frequencies in the coarse grid.
The cardinality $n^2$ of the coarse grid determines the size of the $n^2\times n^2$ matrix $L_n$, while the cardinality $N^2$ of the fine grid determines the computation effort put into estimating the inner products via the DFT.
In our experiments we will use $n=32$ or $64$, and $N=4n$.
Further details are provided in \cite{CF20}.

\subsection{Resolvent $(I-\mathcal{L}_0)^{-1}$} 
For our approximation of $(I-\mathcal{L}_0)^{-1}$ we wish to compute the action of the resolvent $(I-\mathcal{L}_n)^{-1}$ on the image of $\mathcal{L}_n$ intersected with the space of mean-zero functions $V_0\subset L^2$.
In terms of Fourier modes, we therefore consider $(I-\mathcal{L}_n)^{-1}$ to act on $\mathrm{span}\{e_{\mathbf{k}}:\mathbf{k}\in F_n\setminus (0,0)\}$.
As in the discussion of Section \ref{sec:Lmatrix}, $\mathcal{L}_n$ leaves $\mathrm{span}\{e_{\mathbf{k}}:\mathbf{k}\in F_n\setminus (0,0)\}$ invariant, and the action of  $\mathcal{L}_n$ on this space can be exactly captured by the matrix representation $\hat{L}_n$ (assuming perfect evaluation of Fourier transforms).
Similarly, the action of $(I-\mathcal{L}_n)^{-1}$ on $\mathrm{span}\{e_{\mathbf{k}}:\mathbf{k}\in F_n\setminus (0,0)\}$ is exactly captured by $(I-\hat{L}_n)^{-1}$.
To restrict the matrix representation $(I-\hat{L}_n)$ to the appropriate subspace we simply delete the row and column of $I-\hat{L}_n$ corresponding to the $(0,0)$ mode.
We may then explicitly invert the $(n^2-1)\times (n^2-1)$ matrix $I-\hat{L}_n$.

    \subsection{Observable $c$} We take the FFT of the observation function $c$ evaluated on a fine uniform $N\times N$ spatial grid in $\mathbb{T}^2$  where $N\gtrsim 4n$ and restrict the resulting frequency representation to the lower modes in $F_n$. 
    We further remove the entry corresponding to the mode $(0,0)$ because the integrals in \eqref{akneqnthm} and \eqref{alneqnthm} may be restricted to $V_0$.  We denote the resulting vector by $\hat{c}_n$.
    
    \subsection{Estimate of the SRB measure $f_0$} 
    \label{sec:srb}
    We calculate the leading eigenvector $\hat{v}_n$ of $\hat{L}_n$ using the  \verb"eigs" function in  \verb"Arpack.jl" and assemble the estimate of $f_0$ as $\sum_{\mathbf{k}\in F_n} \hat{v}_{n,\mathbf{k}}e_{\mathbf{k}}$, where $\hat{v}_{n,\mathbf{k}}$ indicates the element of the vector $\hat{v}_n$ corresponding to mode ${\mathbf{k}}$.
   {The complex eigenvector $\hat{v}_n$ of the complex matrix $\hat{L}_n$ typically has an arbitrary complex phase assigned by the numerical eigensolver, so we automatically adjust the phase to maximise the real part of $f_0$.}
    Theorem 6.3 \cite{CF20} guarantees convergence of the SRB measure (in the $\|\cdot\|_{p-1,q+1}$ norm) as $n\to\infty$.

  \subsection{Estimate of the divergence term}
  \label{ss:3terms}
  For the numerical computations, it will be convenient to use an expanded form the divergence term in \eqref{akneqnthm},
\begin{eqnarray}
\nonumber \nabla\cdot\left(  f_0(\cdot)(D_\cdot T_0)^{-1}(e_{\mathbf{k}},0)(\cdot))\right)&=&\langle D_xf_0, (D_xT_0)^{-1}\circ (e_{\mathbf{k}}(x),0)\rangle\\
\nonumber&+& f_0(x)\langle \nabla\cdot (D_xT_0)^{-1},(e_{\mathbf{k}}(x),0)\rangle\\
\label{eq:expanded}&+& f_0(x)\Tr((D_xT_0)^{-1}\circ D_x(e_{\mathbf{k}}(x),0)),
 \end{eqnarray}
 and similarly for \eqref{alneqnthm}.
 This expansion follows in the same way that \eqref{eq:expandeddivrep} is developed from \eqref{eq:divrep}.
  For simplicity, the differentiation for $D_xf_0$ is done by automatic differentiation (AD) applying the \verb"gradient" function in \verb"ForwardDiff.jl" \cite{RLP16} to the estimate of $f_0$  described immediately above, followed by evaluation on the $N\times N$ fine spatial grid. 
  Similarly, lifting $T_0$ to $\mathbb{R}^2$, the derivatives $(D_xT_0)^{-1}$ and $\nabla\cdot(D_xT_0)^{-1}$ are computed by the AD \verb"jacobian" function, and then evaluated on the fine spatial grid. 
 The derivative of the elementary modes of the vector fields $D_x(e_{\mathbf{k}},0)$ and $D_x(0, e_{\mathbf{l}})$ are trivially defined directly and evaluated on the fine spatial grid.
 
 \subsection{Putting everything together} 
 \label{sec:together}
 For each fixed $\mathbf{k}\in F_n$, the three terms calculated in \S\ref{ss:3terms} are added together and a two-dimensional FFT is taken on the fine $N\times N$ spatial grid. 
 The resulting two-dimensional frequency array is restricted to the modes in $F_n\setminus (0,0)$ and multiplied by $\hat{L}_n$ {and} $(I-\hat{L}_n)^{-1}$ 
 and $\hat{c}_n$, and summed to obtain a single complex scalar coefficient $a_{\mathbf{k}}^{(1)}$ or $a_{\mathbf{l}}^{(2)}$.

To obtain a straightforward speedup when looping over modes $\mathbf{k}$ and $\mathbf{l}$ in $F_n$, all components of \eqref{akneqnthm}, \eqref{alneqnthm}, and \eqref{eq:expanded} that do not depend on $\mathbf{k}$ or $\mathbf{l}$ can be pre-computed outside the loop.
This includes the construction and multiplication of $\hat{c}_n$, $(I-\hat{L}_n)^{-1}$,
and $\hat{L}_n$, and all relevant constructions and multiplications of the terms in \eqref{eq:expanded} described in \S\ref{ss:3terms}.

\subsection{Convergence of the Fej\'er kernel algorithm}

For a given Fourier order $n$, the numerical scheme presented in sections \ref{sec:Lnumerical}--\ref{sec:together} produces an approximate optimal vector field 
$$
\Td_n:=\left(\sum_{\mathbf{k}\in\mathbb{Z}^2} a^{(1)}_{\mathbf{k},n}e_{\mathbf{k}},\sum_{\mathbf{l}\in\mathbb{Z}^2} a^{(2)}_{\mathbf{l},n}e_{\mathbf{l}}\right),
$$
 where the coefficients $a^{(1)}_{\mathbf{k},n}$ and $a^{(2)}_{\mathbf{l},n}$ are given by the formulae \eqref{akneqnthmeps} and \eqref{alneqnthmeps}, replacing $\mathcal{L}_\epsilon$ with $\mathcal{L}_{n}$ from Section \ref{sec:Lmatrix} and $f_\epsilon$ with $f_n$ from section \ref{sec:srb}.

We now state a corollary of Theorem \ref{thmmolli} that guarantees that the $\Td_n$ converge to the true optimal $\Td$ under increasing numbers of Fourier modes. 
Because the Fej\'er kernel is not locally supported, we require an additional hypothesis on $\LG_0$.
For $f\in \mathcal{B}^{p,q}$ with $p,q\in \NN$ such that $p+q=3$, let $\tau_y(f)(x)=f(x+y)$ denote the translation by $y$.
We assume that the for any $y\in \TT$, the operator $\tau_y\LG_0 : \B^{p,q}\to \B^{p,q}$ is well defined
and continuous and
\begin{align}\label{eqtranslg}
    \sup_{y\in \TT}\max\{\|\tau_y\LG_0\|_{2,1},\|\tau_y\LG_0\|_{1,2}\}<\infty.
\end{align}
\begin{cor}\label{corfejermol}
  For $c\in C^3(\TT,\mathbb{R})$, if $\LG_0$ satisfies \eqref{eqtranslg} then $\|\Td_n-\Td\|_{H^7}\to 0$ as $n\to\infty$. 
\end{cor}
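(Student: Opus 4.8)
The plan is to deduce Corollary \ref{corfejermol} directly from Theorem \ref{thmmolli}, taking for the mollifying kernels the $n$th two-dimensional Fej\'er kernels $k_\epsilon := K_{n,2}$, indexed so that $\epsilon \downarrow 0$ as $n\to\infty$. Theorem \ref{thmmolli} already delivers $H^7$-convergence of the vector fields $\Td_\epsilon$ built from the coefficients \eqref{akneqnthmeps}--\eqref{alneqnthmeps} to the unique optimiser $\Td$ of \eqref{optobs}--\eqref{optobs2}, so the whole task is to (i) check that the Fej\'er family meets the abstract hypotheses of Theorem \ref{thmmolli}, with \eqref{eqtranslg} supplying exactly the ingredient that the non-local support of $K_{n,2}$ forces us to add, and (ii) observe that the numerically produced $\Td_n$ of Sections \ref{sec:Lnumerical}--\ref{sec:together} coincide with these $\Td_\epsilon$.

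For (i), I would first record that the Fej\'er kernels satisfy the standing requirements imposed on $\{k_\epsilon\}$ just before Theorem \ref{thmmolli}: each $K_{n,2}$ is a trigonometric polynomial, hence lies in $C^\infty(\TT,[0,\infty))$ (nonnegativity following from $K_{n,1}(x)=\tfrac{1}{n+1}(\sin((n+1)\pi x)/\sin(\pi x))^2\ge 0$ together with $K_{n,2}=K_{n,1}\otimes K_{n,1}$), and $\int_\TT K_{n,2}\,dm=1$; moreover the convolution $f\mapsto K_{n,2}*f$, which acts on Fourier modes by the reweighting $\Pi_n$ of \eqref{eq:fejer_fourier_series}, extends to a bounded operator on every $\B^{p,q}$ by the proof of Lemma 5.1 of \cite{CF20}, so that $\LG_n = K_{n,2}*(\LG_0\,\cdot\,)=\Pi_n\circ\LG_0$ is a legitimate instance of the operator $\LG_\epsilon$ of Theorem \ref{thmmolli}. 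The substantive step is to verify the two (KL) conditions required there, in the pairs $(\|\cdot\|_{2,1},\|\cdot\|_{1,2})$ and $(\|\cdot\|_{1,2},\|\cdot\|_{0,3})$. Here \eqref{eqtranslg} is precisely the instance of condition (15) of \cite{CF20} adapted to the spaces $\B^{2,1}$ and $\B^{1,2}$ in play (a $\B^{0,3}$ translation bound being subsumed since $\|\cdot\|_{0,3}\le\|\cdot\|_{1,2}$ by item ii) of Proposition \ref{lemmulti}); combined with the structural hypotheses (S1) and (S3) of \cite{CF20}, which hold for $C^{r+1}$ Anosov maps of $\TT$ on the Gou\"ezel--Liverani spaces, Proposition 6.1 and Corollary 6.4 of \cite{CF20} then yield that $\{\LG_n\}_n$ satisfies the $(\|\cdot\|_{p,q},\|\cdot\|_{p-1,q+1})$ (KL) conditions for $p+q=3$, in particular for $(p,q)=(2,1)$ and $(p,q)=(1,2)$. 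As in the proof of Theorem \ref{thmmolli}, the Lasota--Yorke part of these conditions together with Corollary 1 of \cite{KL99} guarantees that for all large $n$ the operator $\LG_n$ is quasi-compact on $\B^{2,1}$ with a simple leading eigenvalue $\lambda_n>\rho>\alpha$ and a well-defined leading eigendistribution $f_n$, so that the coefficients $a^{(1)}_{\mathbf{k},n}, a^{(2)}_{\mathbf{l},n}$ are defined.

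For (ii), under the paper's standing assumption that the Fourier transforms are evaluated exactly, the matrix $\hat L_n$ of Section \ref{sec:Lmatrix} represents the action of $\LG_n$ on $\mathrm{span}\{e_{\mathbf{k}}:\mathbf{k}\in F_n\}$ exactly, and $(I-\hat L_n)^{-1}$ (with the $(0,0)$ row and column removed) represents $(I-\LG_n)^{-1}$ on the mean-zero subspace exactly; hence the coefficients produced by the scheme coincide with the $a^{(1)}_{\mathbf{k},\epsilon}, a^{(2)}_{\mathbf{l},\epsilon}$ of \eqref{akneqnthmeps}--\eqref{alneqnthmeps} for $k_\epsilon=K_{n,2}$, and $\Td_n=\Td_\epsilon$. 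Theorem \ref{thmmolli} then gives $\|\Td_n-\Td\|_{H^7}\to 0$, completing the proof. The only genuine obstacle is the identification carried out in the previous paragraph: one must confirm that \eqref{eqtranslg} really is the instance of condition (15) of \cite{CF20} needed to invoke Proposition 6.1 of \cite{CF20} \emph{simultaneously} in the $(\B^{2,1},\B^{1,2})$ and $(\B^{1,2},\B^{0,3})$ pairs, and that the structural hypotheses (S1), (S3) indeed hold in this torus setting; once these are granted, the remainder is a transcription of Theorem \ref{thmmolli} to the concrete Fej\'er scheme.
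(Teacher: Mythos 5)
Your proposal is correct and follows essentially the same route as the paper: the paper's proof likewise invokes Corollary 6.4 of \cite{CF20} (for which hypothesis \eqref{eqtranslg} plays the role of condition (15) there) to obtain the $(\|\cdot\|_{2,1},\|\cdot\|_{1,2})$ and $(\|\cdot\|_{1,2},\|\cdot\|_{0,3})$ (KL) conditions for the Fej\'er-mollified operators $\LG_n$, and then runs the argument of Theorem \ref{thmmolli}. Your additional checks (nonnegativity and normalisation of $K_{n,2}$, exactness of the matrix representation $\hat{L}_n$ on the span of the retained modes) are elaborations of points the paper treats in Sections \ref{subsecmollifier}--\ref{sec:together} rather than a different argument.
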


\begin{proof}
For large enough $n$,  Corollary 6.4 \cite{CF20} ensures that the operator $\LG_n$ satisfies the (KL) condition in \cite{CF20} on $(\|\cdot\|_{2,1},\|\cdot\|_{1,2})$ and $(\|\cdot\|_{1,2},\|\cdot\|_{0,3})$.
From this fact, we may follow the arguments of the proof of Theorem \ref{thmmolli} to obtain the result.
\end{proof}

As discussed in Remark \ref{Leneremark}, one may remove hypothesis \eqref{eqtranslg} by using a Dirichlet kernel in place of a Fej\'er kernel to truncate to the first $n$ Fourier modes, followed by convolution with a locally supported kernel $k_\epsilon$.
The numerical strategy described in Sections \ref{sec:Lnumerical}--\ref{sec:together} could also be applied to the ``metaformula'' in \cite{Ba14} in the obvious way, retaining the convergence guarantee as discussed in Remark \ref{Leneremark}.

\subsection{Computation time}
To give an example of the scale of the computation, with $n=32$ and $N=128$ (the resolution used in Section \ref{sec:casestudy}), we have $32^2=1024$ modes and therefore $2048$ coefficients are required in total for \eqref{akneqnthm} and \eqref{alneqnthm}. 
Each coefficient requires evaluating the three terms in \eqref{eq:expanded} on a fine grid of $(4\times 32)^2=16384$ points, followed by a two-dimensional FFT on this fine grid.
Running Julia 1.11 on a laptop with an Intel Core Ultra 7 processor, the two main loops over $\mathbf{k}$ and $\mathbf{l}$ each take about 6 seconds with standard Julia multithreading (about 5ms per coefficient).
Computations at higher resolutions are also very tractable, e.g.\ with $n=64$ and $N=256$, each of the two main coefficient loops take about 110 seconds (about 26ms per coefficient).

The two-dimensional example in \cite{GaNi25} used a lower resolution of $n=15$ and is reported to take 863 seconds (about 1.92 seconds per coefficient).  
Thus, at least in the two-dimensional case, our approach demonstrates a speedup by a factor of around three orders of magnitude.

\section{Case studies}
\label{sec:casestudy}
In this section we illustrate the efficacy and utility of our approach through two case studies.
Julia code for these computations is available at \url{https://github.com/gfroyland/Optimal-Hyperbolic-Response}.

\subsection{Stabilisation of the hyperbolic fixed point of Arnold's cat map}

In our first case study we let $T_0(x)= (2x_1 + x_2 , x_1 + x_2 )\pmod 1$ be the area-preserving  Arnold cat map with $f_0\equiv 1$.
We select the objective function $c(x_1,x_2)=\cos(2\pi x_1)+\cos(2\pi x_2)$.
The maximum of $c$ occurs at $(x_1,x_2)=(0,0)$, which coincides with the fixed point of $T_0$, and there is a minimum at $(0.5,0.5)$.
Recall we wish to solve \eqref{optobs}--\eqref{optobs2}, maximising $J(\dot{T})=\int c R(\dot{T}) = \int c\cdot \dot{f}$, where $\dot{f}=R(\dot{T})$ is the derivative of $\frac{d}{d\delta}f_\delta|_{\delta=0}$.
To increase the expectation of $c$, mass should be moved away from $(x_1,x_2)=(0.5,0.5)$ and toward the origin.
One might guess that a suitable perturbation $\dot{T}$ tries to make the origin more attracting that it currently is. 
We shall see that this is indeed the case, but that the optimal perturbation $\dot{T}$ does more, making profitable use of most of the phase space.
To estimate the optimal $\dot{T}$ we use the ansatz \eqref{dotTeqn}, and compute the Fourier coefficients $a_{\mathbf{k}}^{(1)}$ and $a_{\mathbf{k}}^{(2)}$ in \eqref{akneqnthm}--\eqref{alneqnthm}.
Note that because $T_0$ is linear, the first two terms on the RHS of \eqref{eq:expanded} are zero.
Because the $H^7$ norm of $e_{\mathbf{k}}$ grows like $|\mathbf{k}|^7$, if we were to use the standard $H^7$ norm the optimal $\dot{T}$ would consist almost exclusively of the very lowest order modes, either $e_{(0,0)}, e_{(1,0)},$ or $e_{(0,1)}$.
This effect can be partially seen in Figures 4, 6, and 9 
 in \cite{GaNi25}, where the standard Sobolev norm is used, and very regular optimal vector fields are obtained.
We therefore introduce a scaled Sobolev norm $\|\cdot\|_{H^7,\gamma}$ where $\|e_{\mathbf{k}}\|_{H^7,\gamma}^2 = \sum_{m=0}^7(2\pi\gamma)^{2m}|\mathbf{k}|_2^{2m}$, for $0<\gamma\le 1$.
In our experiments we choose $\gamma=0.02$, providing a unit ball that allows higher modes to become more active in the optimal solution.
This yields sufficient variability in the optimal vector field to provide a dynamical interpretation of the solution. 

Figure \ref{linearvf}
shows the optimal vector field $\dot{T}$ computed with $n=32$ Fourier modes in each coordinate direction as described in Section \ref{sec:numerics}.
\begin{figure}
       \centering
\includegraphics[width=0.49\linewidth]{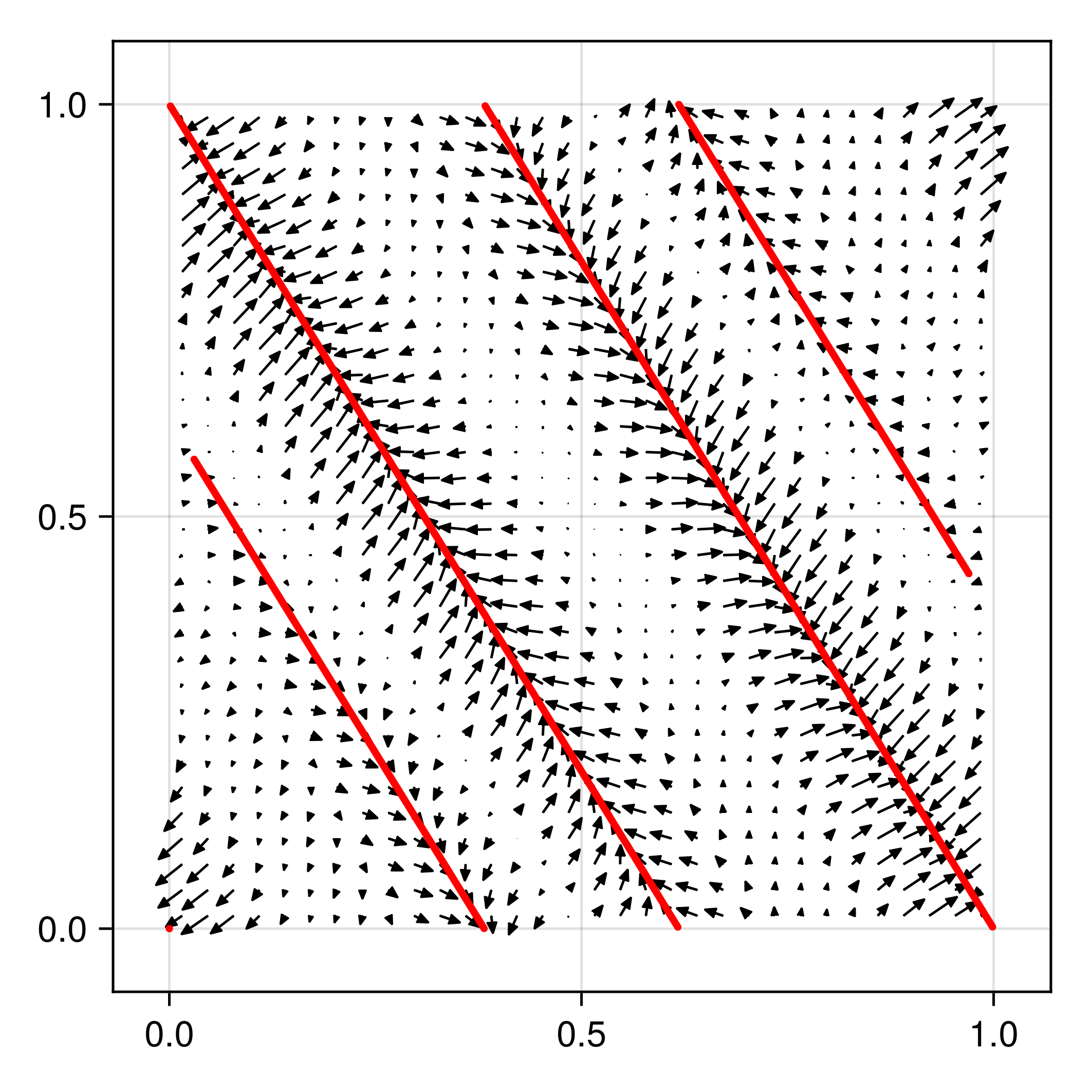}
\caption{Optimal vector field computed with $n = 32$ Fourier modes along each coordinate
direction, using a Sobolev norm scaling factor $\gamma=0.02$. The red lines are a segment of the stable manifold of the fixed point at the origin. 
A uniform scaling of the length of the vectors in this figure has been made for improved visualisation.}
    \label{linearvf}
\end{figure}
Overlaid on the vector field in Figure \ref{linearvf} is a segment of the stable manifold for the fixed point at the origin.
One can immediately see that the vector field pushes a neighbourhood of this stable manifold closer to the stable manifold.
The vector field in this neighbourhood is also partly directed along the stable manifold in the ``direction of travel'' toward the fixed point.
In a neighbourhood of the fixed point itself the vector field pushes almost orthogonally to the stable manifold, and counters the expanding direction of $T_0$ by pointing toward the origin.
This clever strategy means that if a perturbed map $T_\delta=T_0+\delta \dot{T}$ were formed for some small $\delta>0$, the effect of $\dot T$ would be to  entrain mass toward a neighbourhood of the origin under repeated iteration of $T_\delta$.
This is indeed the case;  Figure \ref{linearsrbpert} illustrates how the SRB measure of $T_\delta$ is localised nearby the maximum of the objective function $c$.
\begin{figure}
       \centering
\includegraphics[width=0.49\linewidth]{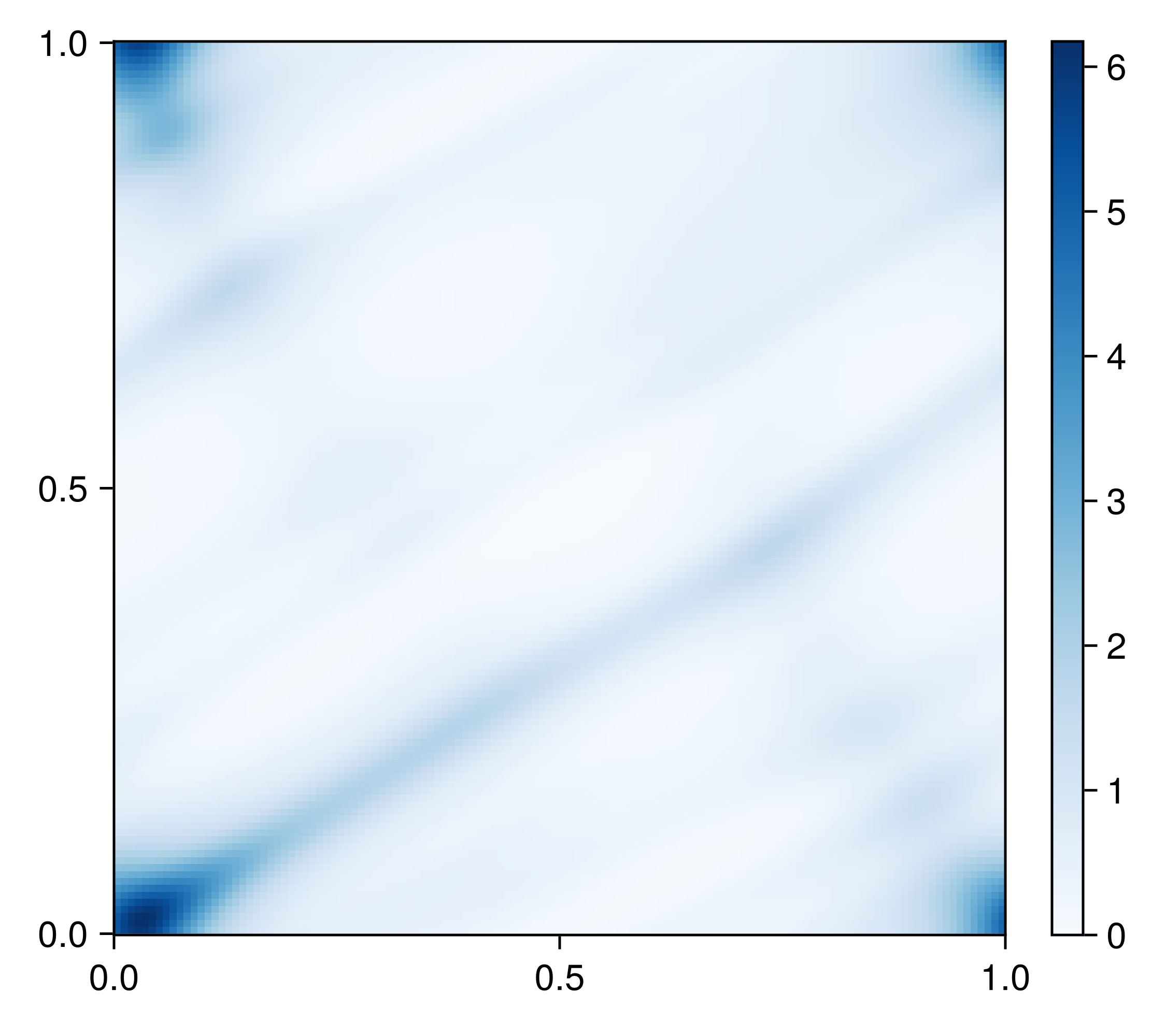}
    \caption{SRB measure estimate of $T_\delta = T_0 +\delta\cdot \dot{T}$ for some positive $\delta$, where the average norm of $\delta\cdot\dot{T}$ is approximately 2\% of the domain diameter. Note the concentration of mass about the fixed point near the origin.}
    \label{linearsrbpert}
\end{figure}
For the choice of $\delta$ in Figure \ref{linearsrbpert}, one has $\delta\int_{\mathbb{T}^2} \|\dot{T}(x)\|\ dx \approx 0.0202.$
In other words, the average norm of $\delta\dot{T}$ is 0.0202, corresponding to an average perturbation of $T_0$ by around 2\% of the domain diameter.

\subsection{Localising nearby a hyperbolic period-two orbit of a nonlinear Anosov map}

In our second case study we modify Arnold's cat map to obtain a nonlinear Anosov map $$T_0(x)= (2x_1 + x_2 + 2\Delta  \cos(2\pi x_1), x_1 + x_2 + \Delta \sin(4\pi x_2 + 1))\pmod 1$$ for $\Delta=0.01$; see \cite{CF20} for a proof that $T_0$ is Anosov.
The map $T_0$ has two hyperbolic period-two orbits, and we focus on the period-two orbit $$(p_1,p_2):\approx\{(0.1796,
 0.4023),(0.7877,
 0.5852)\}.$$
We select the objective function 
\begin{equation*}
c(x_1,x_2)=\sum_{k\in \ZZ^2} \exp\left(-\frac{\|x-(p_1+k)\|^2}{0.1^2}\right) + \exp\left(-\frac{\|x-(p_2+k)\|^2}{0.1^2}\right),
\end{equation*}
namely a sum of two localised Gaussians centred on each of the period-two points, where the norm $\|\cdot\|$ is the periodic Euclidean norm on the torus  and the summation over $k$ ensures $c$ is $C^\infty$ on the torus\footnote{Because $\exp(-1/0.1^2)<10^{-43}$, in the numerical implementation we instead use only the $(0,0)$ term in the sum, namely
$c(x_1,x_2)= \exp(-\|x-p_1\|^2 / 0.1^2) + \exp(-\|x-p_2\|^2/0.1^2)$, incurring a negligible error.}.   
The maxima of $c$ obviously occur at $p_1$ and $p_2$.
By maximising $J(\dot{T})=\int c R(\dot{T}) = \int c\cdot \dot{f}$ to increase the expectation of $c$ under infinitesimal perturbations $\dot{T}$, we therefore expect mass to be concentrated near the period-two orbit.

Figure \ref{nonlinearvf}
shows the optimal vector field $\dot{T}$ computed with $n=32$ Fourier modes in each coordinate direction as described in Section \ref{sec:numerics}.
\begin{figure}
       \centering
\includegraphics[width=0.49\linewidth]{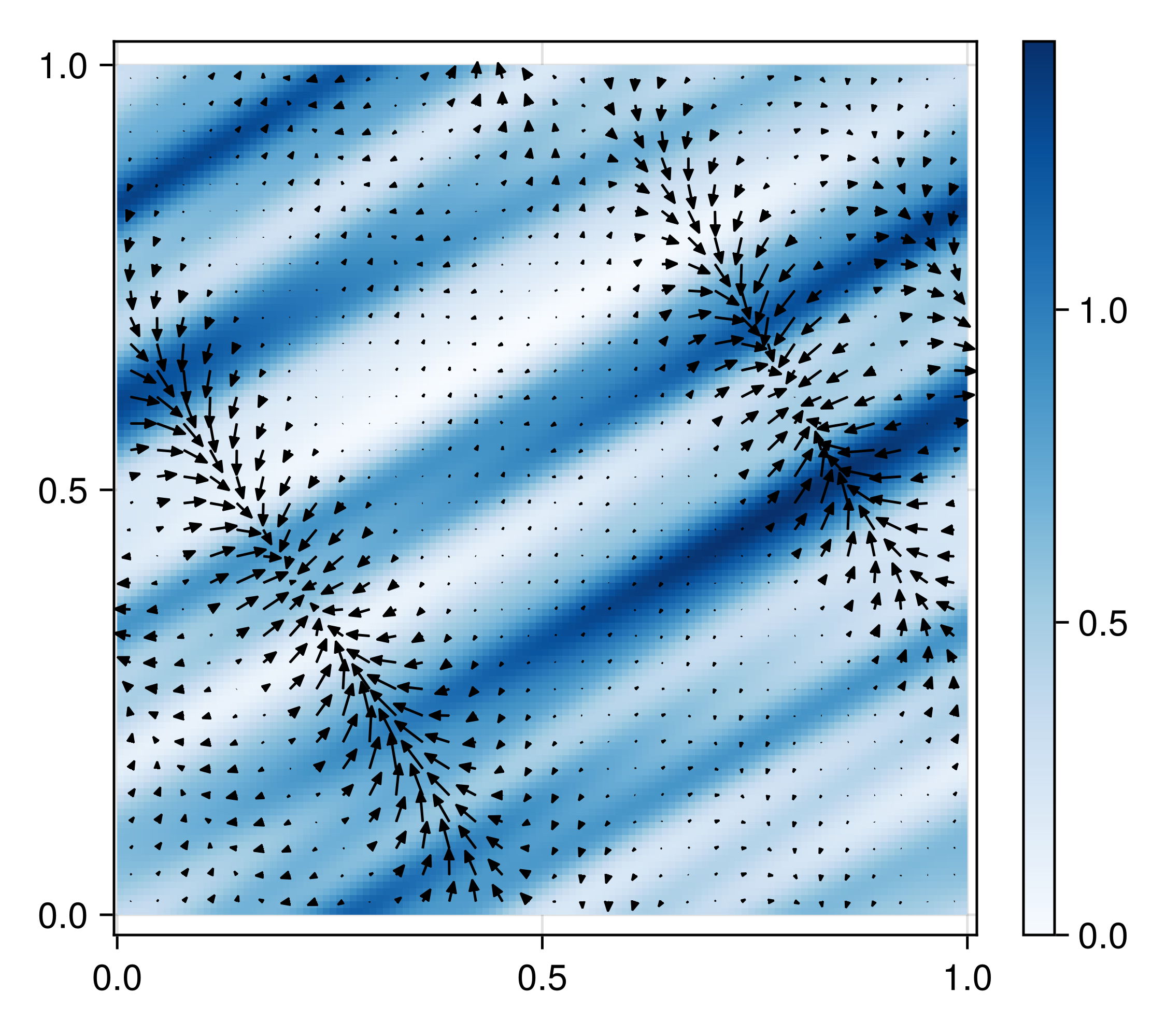}
\caption{Optimal vector field computed with $n=32$ Fourier modes along each coordinate direction, using a Sobolev norm scaling factor $\gamma=0.02$. The blue background is an estimate of the SRB measure of the nonlinear Anosov map $T_0$.
A uniform scaling of the length of the vectors in this figure has been made for improved visualisation.}
    \label{nonlinearvf}
\end{figure}
Because $T_0$ is nonlinear, all terms in \eqref{eq:expanded} contribute to the computation of the coefficients $a_\mathbf{k}^{(1)}$ and $a_\mathbf{l}^{(2)}$.
We use the Sobolev norm $\|\cdot\|_\gamma$ with $\gamma=0.02$ as in the previous example.
Overlaid on the vector field in Figure \ref{nonlinearvf} is an estimate of the SRB measure of $T_0$, computed as described in Section \ref{sec:srb}.
The optimal vector field $\dot{T}$ displays convergent behaviour centred at each of the period-two orbits and entrainment to modest segments of the local stable manifolds of each of the period-two points.
In contrast to Figure \ref{linearvf}, the optimal $\dot{T}$ in Figure \ref{nonlinearvf} is relatively inactive away from local segments of the stable manifolds of the two period-two points.
This is likely due to the different form of the observation $c$.
In the current example, $c$ is indifferent to the presence of mass outside local neighbourhoods of the period-two points, while in the previous example the observation $c$ penalised mass more if it lay farther from the fixed point, thus encouraging a stronger ``sweeping out'' of mass from a larger fraction of the domain. 

Figure \ref{nonlinearsrbpert} shows the SRB measure of $T_\delta=T_0 +\delta \dot{T}$, using the optimised $\dot{T}$.
\begin{figure}
       \centering
\includegraphics[width=0.49\linewidth]{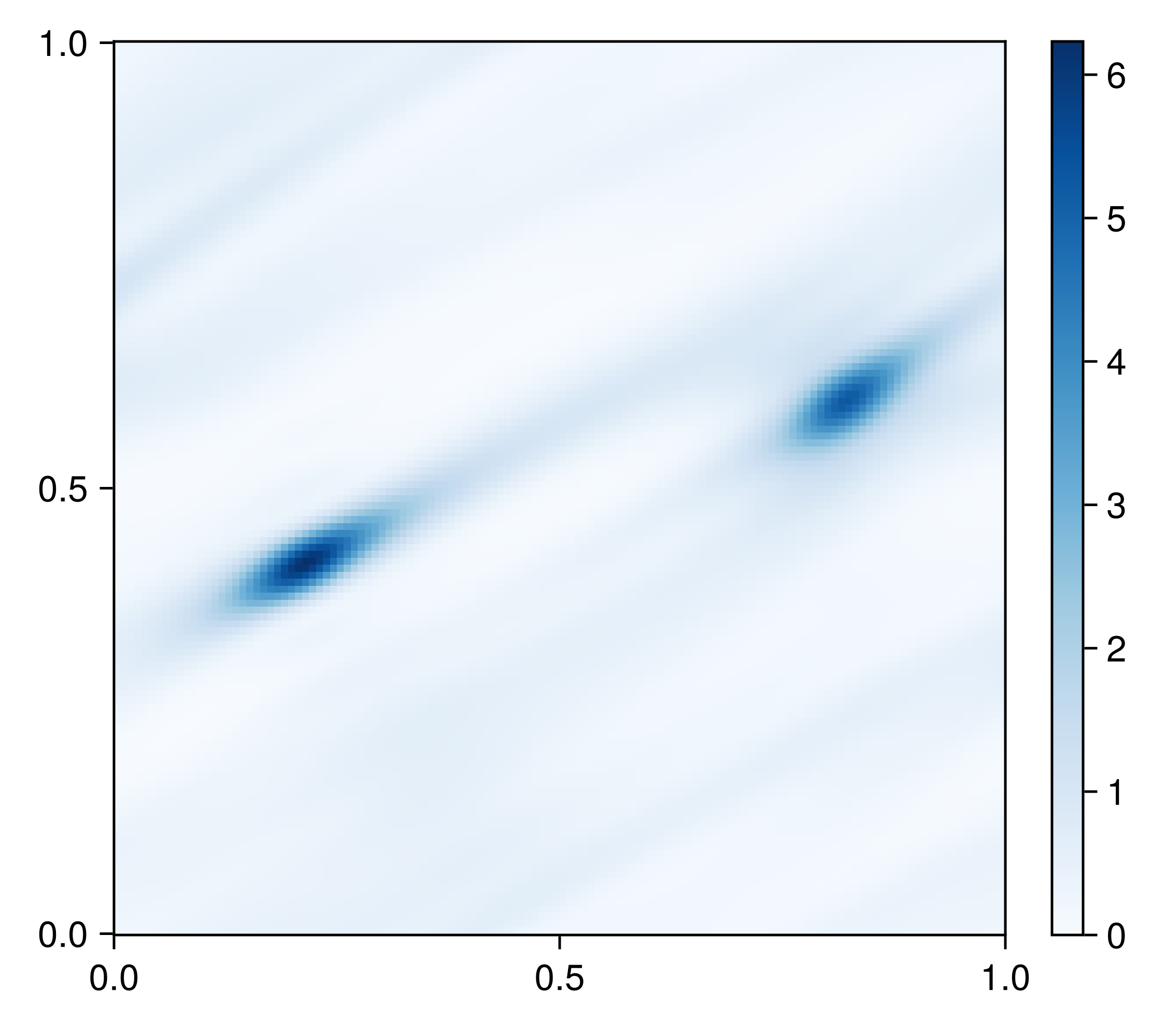}
\caption{Estimate of the SRB measure of $T_\delta = T_0 +\delta\cdot \dot{T}$ for some positive $\delta$, where the average norm of $\delta\cdot\dot{T}$ is approximately 1.2\% of the domain diameter. Note the strong concentration of mass about the period-two orbit.}
    \label{nonlinearsrbpert}
\end{figure}
Note that the SRB measure of $T_\delta$ strongly localises nearby the maxima of the objective function $c$, which occur at the period-two orbit of $T_0$. 
For the choice of $\delta$ in Figure \ref{nonlinearsrbpert}, one has $\delta\int_{\mathbb{T}^2} \|\dot{T}(x)\|\ dx \approx 0.0122.$
In other words, one may accomplish this dramatic targeted change in the SRB measure through a relatively small smooth optimal perturbation to $T_0$ where the average
perturbation $\delta\dot{T}$ is around 1.2\% of the domain diameter.

\newpage
\section{Acknowledgements}
The research of GF and MP is supported by an Australian Research Council Laureate Fellowship (FL230100088). The authors  acknowledge the helpful comments of an anonymous referee.

\bibliographystyle{abbrv}
\bibliography{refs}

@article{CF20,
  title={Fourier approximation of the statistical properties of {A}nosov maps on tori},
  author={Crimmins, Harry and Froyland, Gary},
  journal={Nonlinearity},
  volume={33},
  number={11},
  pages={6244},
  year={2020}
}

@Article{FFTW,
  author   =  {Frigo, Matteo and Johnson, Steven~G.},
  title    =  {The Design and Implementation of {FFTW3}},
  journal  =  {Proceedings of the IEEE},
  year     =  2005,
  volume   =  93,
  number   =  2,
  pages    =  {216--231},
  note     =  {Special issue on ``Program Generation, Optimization, and Platform Adaptation''},
  doi      =  {10.1109/JPROC.2004.840301}
}

@article{FKS20,
  title={Computation and optimal perturbation of finite-time coherent sets for aperiodic flows without trajectory integration},
  author={Froyland, Gary and Koltai, P{\'e}ter and Stahn, Martin},
  journal={SIAM Journal on Applied Dynamical Systems},
  volume={19},
  number={3},
  pages={1659--1700},
  year={2020}
}

@article{RLP16,
    title = {Forward-Mode Automatic Differentiation in {J}ulia},
   author = {{Revels}, J. and {Lubin}, M. and {Papamarkou}, T.},
  journal = {arXiv:1607.07892 [cs.MS]},
     year = {2016},
      url = {https://arxiv.org/abs/1607.07892}
}

@article{GL06,
  title={Banach spaces adapted to {A}nosov systems},
  author={Gou{\"e}zel, S{\'e}bastien and Liverani, Carlangelo},
  journal={Ergodic Theory and dynamical systems},
  volume={26},
  number={1},
  pages={189--217},
  year={2006}
}

@article{KL99,
  title={Stability of the spectrum for transfer operators},
  author={Keller, Gerhard and Liverani, Carlangelo},
  journal={Annali della Scuola Normale Superiore di Pisa-Classe di Scienze},
  volume={28},
  number={1},
  pages={141--152},
  year={1999}
}

@article{GaNi25,
  title={Optimal Response for Hyperbolic Systems by the fast adjoint response method},
  author={Galatolo, Stefano and Ni, Angxiu},
  journal={arXiv preprint arXiv:2501.02395},
  year={2025}
}

@book{lai,
   title =     {Introduction to continuum mechanics},
   author =   {W. Michael Lai and David Rubin and Erhard Krempl},
   publisher = {Butterworth-Heinemann},
   isbn =      {0750628944,9780750628945,9780585470658},
   year =      {1996},
   edition =   {3rd}
}

@book{brezis,
   title =     {Functional Analysis, Sobolev Spaces and Partial Differential Equations},
   author =  {H. Brezis},
   publisher = {Springer-Verlag New York},
   isbn =      {0387709134,978-0-387-70913-0},
   year =      {2010},
   series =    {Universitext},
   edition =   {1st}
}

@book{katok, 
place={Cambridge}, 
series={Encyclopedia of Mathematics and its Applications}, 
title={Introduction to the Modern Theory of Dynamical Systems}, 
publisher={Cambridge University Press}, 
author={Katok, A and Hasselblatt, B}, 
year={1995}, 
pages={i–vi}, 
collection={Encyclopedia of Mathematics and its Applications}
}

@article{GhLu20,
  title = {The physics of climate variability and climate change},
  author = {Ghil, M and Lucarini, V},
  journal = {Rev. Mod. Phys.},
  volume = {92},
  issue = {3},
  pages = {035002},
  numpages = {77},
  year = {2020},
  month = {Jul},
  publisher = {American Physical Society},
  doi = {10.1103/RevModPhys.92.035002},
  url = {https://link.aps.org/doi/10.1103/RevModPhys.92.035002}
}

@article{HaMa10,
doi = {10.1088/0951-7715/23/4/008},
url = {https://dx.doi.org/10.1088/0951-7715/23/4/008},
year = {2010},
publisher = {},
volume = {23},
number = {4},
pages = {909},
author = {Hairer, M and Majda, A J},
title = {A simple framework to justify linear response theory},
journal = {Nonlinearity}
}

@conference{Ba14,
    author = {Baladi, V.},
    booktitle ={Proceedings of the International Congress of Mathematicians-Seoul 2014. Vol. III. Invited lectures},
    title = {Linear response, or else},
pages="525--545",
    year = {2014}

}

@article{AnFrGa22,
    author ={Antown, F and Froyland, G and Galatolo, S},
volume={32},
url={https://doi.org/10.1007/s00332-022-09839-0},
doi={10.1007/s00332-022-09839-0},
title ={Optimal Linear Response for {Markov Hilbert-–Schmidt} Integral Operators and Stochastic Dynamical Systems} ,
journal ={Journal of Nonlinear Science} ,
    year ={2022},
pages={60}
}

@article{Ru97,
    author =  {Ruelle, D},
doi={ 10.1007/s002200050134},
    title = {Differentiation of {SRB} States},
    journal ={ Communications in Mathematical Physics} ,
    year = {1997},
volume={187},
pages={15}
}

@article{DrFr18,
    author = {Dragičević, D. and Froyland, G.},
    title ={ Optimal Linear Responses for {Markov} Chains and Stochastically Perturbed Dynamical Systems} ,
    journal ={Journal of Statistical Physics} ,
    year = {2018},
volume={170},
pages={1051–1087},
doi={10.1007/s10955-018-1985-1}
}

@article{FrGa25,
doi = {10.1088/1361-6544/adac9c},
url = {https://dx.doi.org/10.1088/1361-6544/adac9c},
year = {2025},
publisher = {IOP Publishing},
volume = {38},
number = {3},
pages = {035001},
author = {Froyland, Gary and Galatolo, Stefano},
title = {Optimal linear response for expanding circle maps},
journal = {Nonlinearity}
}

@article {GaPo17,
    AUTHOR = {Galatolo, Stefano and Pollicott, Mark},
     TITLE = {Controlling the statistical properties of expanding maps},
   JOURNAL = {Nonlinearity},
  FJOURNAL = {Nonlinearity},
    VOLUME = {30},
      YEAR = {2017},
    NUMBER = {7},
     PAGES = {2737--2751},
      ISSN = {0951-7715,1361-6544},
   MRCLASS = {37C30 (37A50 49N05)},
  MRNUMBER = {3670005},
MRREVIEWER = {Fabio\ Lamantia},
       DOI = {10.1088/1361-6544/aa714f},
       URL = {https://doi.org/10.1088/1361-6544/aa714f},
}

@article {Po19,
    AUTHOR = {Porte, Matthieu},
     TITLE = {Linear response for {D}irac observables of {A}nosov
              diffeomorphisms},
   JOURNAL = {Discrete Contin. Dyn. Syst.},
  FJOURNAL = {Discrete and Continuous Dynamical Systems. Series A},
    VOLUME = {39},
      YEAR = {2019},
    NUMBER = {4},
     PAGES = {1799--1819},
      ISSN = {1078-0947,1553-5231},
   MRCLASS = {37C40 (37C30 37D20)},
  MRNUMBER = {3927495},
MRREVIEWER = {Mohammad\ Soufi},
       DOI = {10.3934/dcds.2019078},
       URL = {https://doi.org/10.3934/dcds.2019078},
}

@Inbook{Hopf60,
author="Hopf, Eberhard",
editor="Amerio, Luigi
and Segre, B.",
title="Some Topics of Ergodic Theory",
bookTitle="Sistemi dinamici e teoremi ergodici",
year="2011",
publisher="Springer Berlin Heidelberg",
address="Berlin, Heidelberg",
pages="47--112",
isbn="978-3-642-10945-4",
doi="10.1007/978-3-642-10945-4_2",
url="https://doi.org/10.1007/978-3-642-10945-4_2"
}

@article{Anosov_Sinai_1967,
doi = {10.1070/RM1967v022n05ABEH001228},
url = {https://doi.org/10.1070/RM1967v022n05ABEH001228},
year = {1967},
month = {oct},
publisher = {},
volume = {22},
number = {5},
pages = {103},
author = {Anosov, Dmitry V and Sinai,  Yakov G},
title = {SOME SMOOTH ERGODIC SYSTEMS},
journal = {Russian Mathematical Surveys}
}

@article {Anosov67,
    AUTHOR = {Anosov, D. V.},
     TITLE = {Geodesic flows on closed {R}iemannian manifolds of negative
              curvature},
   JOURNAL = {Trudy Mat. Inst. Steklov.},
  FJOURNAL = {Akademiya Nauk SSSR. Trudy Matematicheskogo Instituta imeni V.
              A. Steklova},
    VOLUME = {90},
      YEAR = {1967},
     PAGES = {209},
      ISSN = {0371-9685},
   MRCLASS = {57.36 (28.00)},
  MRNUMBER = {224110},
MRREVIEWER = {M.\ Eisenberg}
}

@article{Smale73,
year = {1967},
volume = {73},
number = {6},
pages = {747 - 817},
author = {Smale, S},
title = {Differentiable dynamical systems.},
journal = {Bull. Amer. Math. Soc.}
}

@Book{Bowen75,
author={Bowen, Rufus},
editor={Bowen, Robert Edward (Rufus) and Chazottes, Jean-René and Ruelle, David},
title={Equilibrium States and the Ergodic Theory of Anosov Diffeomorphisms},
year={2008},
pages={VIII,76},
publisher={Springer Berlin Heidelberg},
series = {0075-8434},
edition = {2nd},
isbn={978-3-540-77695-6},
doi={10.1007/978-3-540-77695-6}
}

@article{Sinai72,
doi = {10.1070/RM1972v027n04ABEH001383},
url = {https://doi.org/10.1070/RM1972v027n04ABEH001383},
year = {1972},
volume = {27},
number = {4},
pages = {21},
author = {Yakov G Sinai},
title = {GIBBS MEASURES IN ERGODIC THEORY},
journal = {Russian Mathematical Surveys}
}

@article{Ruelle76,
author = {David, Ruelle},
title = {A measure associated with Axiom-A Attractors},
journal = {American Journal of Mathematics},
pages = {619–654},
year={1976},
volume={98},
number={3},
doi = {10.1142/9789812833709_0006},
eprint = {https://doi.org/10.2307/2373810}
}

@article{Young02,
doi = {10.1023/A:1019762724717},
url = {https://doi.org/10.1023/A:1019762724717},
year = {2002},
volume = {108},
pages = {733–754},
author = {Young, Lai-Sang},
title = {What Are {SRB} Measures, and Which Dynamical Systems Have Them?},
journal = {Journal of Statistical Physics}
}

@article {Young98,
    AUTHOR = {Young, Lai-Sang},
     TITLE = {Statistical properties of dynamical systems with some
              hyperbolicity},
   JOURNAL = {Ann. of Math. (2)},
  FJOURNAL = {Annals of Mathematics. Second Series},
    VOLUME = {147},
      YEAR = {1998},
    NUMBER = {3},
     PAGES = {585--650},
      ISSN = {0003-486X,1939-8980},
   MRCLASS = {58F15 (58F11)},
  MRNUMBER = {1637655},
MRREVIEWER = {Nikolai\ Chernov},
       DOI = {10.2307/120960},
       URL = {https://doi.org/10.2307/120960},
}

@book{Taylor11,
  author    = {Taylor, Michael E.},
  title     = {Partial Differential Equations I: Basic Theory},
  series    = {Applied Mathematical Sciences},
  volume    = {115},
  edition   = {2},
  publisher = {Springer},
  address   = {New York},
  year      = {2011},
  isbn      = {978-1-4419-7054-1},
  doi       = {10.1007/978-1-4419-7055-8}
}

\end{document}